\newtheorem{theorem}{Theorem}[section]
\newtheorem{corollary}[theorem]{Corollary}
\newtheorem{lemma}[theorem]{Lemma}
\newtheorem{remark}[theorem]{Remark}
\newtheorem{example}[theorem]{Example}
\newcommand{\N}{\mathbb N}
\newcommand{\Q}{\mathbb Q}
\newcommand{\ds}{\displaystyle}
\newcommand{\C}{\mathbb{C}}
\newcommand{\R}{\mathbb{R}}
\newcommand{\ug}{\underline{g}}
\newcommand{\og}{\overline{g}}
\newcommand{\of}{\overline{f}}
\newcommand{\uh}{\underline{h}}
\begin{document}

\title[]{On the Fermat-type partial differential-difference equations on $\C^n$}

\author[T. B. Cao]{Tingbin Cao*}
\address[Tingbin Cao]{Department of Mathematics\\ Nanchang University\\ Nanchang 330031\\
Jiangxi\\ China}
\email{tbcao@ncu.edu.cn}
\thanks{* This paper was supported by the National Natural Science Foundation of China (\#11871260, \#11771090) and the Jiangxi Provincial Natural Science Foundation of China (\# 20232ACB201005).}
\author[J. Wang]{Jun Wang*}
\address[Jun Wang]{School of Mathematics\\ Fudan University\\
Shanghai 200433\\ China}
\email{majwang@fudan.edu.cn}
\author[Z. Ye]{Zhuan Ye}
\address[Zhuan Ye]{Department of Mathematics and Statistic\\ University of Northern Carolina\\ 601 South College Road\\ Wilmington\\ NC 28403-5970\\ USA}
\email{yez@uncw.edu}

\subjclass[2010]{Primary 35B08; 32W50; 39A45; Secondary 32H30; 35A09}

\keywords{Entire functions; Fermat-type equation; Nevanlinna theory; Partial differential-difference equation}

\date{}

\begin{abstract}
Assume that $n$ is a positive integer, $p_j$ ($j=1,2, \cdots, 6)$ are polynomials, $p$ is an irreducible polynomial, and
$f$ is an entire function on $\C^n$. Let
$L(f)=\sum_{j=1}^s q_{t_j}f_{z_{t_j}}$ and $\of(z)=f(z_{1}+c_{1}, \ldots, z_{n}+c_{n})$, where $q_{t_j}$ ($j=1,2, \cdots, s\le n$) are non-zero polynomials on $\C^n$ and  $c=(c_{1}, \ldots, c_{n})\in \mathbb{C}^{n}\setminus\{0\}$.
We show the structures of all entire solutions to  the non-linear partial differential-difference equation
$$(p_{1} L(f)+p_{2}{\of}+p_5 f)^{2}+(p_{3}L(f)+p_{4}{\of}+p_6 f)^{2}=p.$$
The partial differential-difference equation is called a Fermat-type partial differential-difference equation (PDDE).
Further, we find many sufficient conditions and/or necessary conditions for the existence, as well as the concrete representations, of entire solutions to the Fermat-type PDDE.
We also demonstrate several examples on $\mathbb{C}^2$ with non-constant coefficients to verify that all representations in our theorems exist and are accurate and that the entire solutions to the Fermat-type PDDEs could have finite or infinite growth order. Our theorems unify and extend previous results (see, e.g., \cite{cao-xu-2020},
\cite{chang-li-2012}, \cite{khavison-1995}, \cite{li-2007}, \cite{zhangXiaoFang-2022}).
\end{abstract}

\maketitle
\tableofcontents

\section{Introduction}
Fermat's last theorem \cite{wiles-1995} states that the equation $x^{m}+y^{m}=1$, where integer $m\ge 2$  and $x, y, \in \Q$, does not admit nontrivial solutions over the field of rational numbers when $m\geq 3$, and admits nontrivial rational solutions when $m=2.$
In 1927, Montel \cite{montel-1927} studied the solutions to the functional equation $f^{m}+g^{m}=1$, which is called as  a Fermat-type functional equation, over the field of meromorphic functions on $\C$.
Since then, the studies of the Fermat-type functional equation 
have been extended in many directions such as, $af^m+bf^k=c$ (where $a,b,c$ are entire), $u_{z_1}^2+u_{z_2}^2 =p_1e^{p_2}$ (where $p_1$ and $p_2$ are polynomials on $\C^2$), $f^m(z)+f^k(z+z_0)=1$ (where $z_0\in \C^n$ is fixed), to list a few.
In 1990's, Khavinson \cite{khavison-1995} proved  the only entire solutions to  the Fermat-type of PDE $u_{z_1}^2+u_{z_2}^2 =1$ on $\C^2$ are linear polynomials.
Let $u$ and $F$ be entire on $\C^2$ and $F$ does not have a linear factor. In 1997, Hemmati \cite{hemmati1997} proved that if $F(u_{z_1}, u_{z_2})=0$, then $u$ is a linear function.
In 2000's, Saleeby\cite{saleeby-1999}, B. Q.  Li and his collaborators published a series of
articles about the entire or meromorphic solutions to the Fermat-type PDE, as well as  the Fermat-type functional equations (e.g. \cite{chang-li-2012, li-2005, li-2007, li-2012,  li-2014, li-ye-2008}).
In 2012, Liu-Cao-Cao \cite{liu-cao-cao-2012} introduced the Fermat-type partial differential-difference equations (PDDE). Recently, the studies of the Fermat-type PDDE in $\C^n$ are an active research topic
and have frequently appeared in the vast of literatures (e.g.  \cite{ahamed-Allu2023}, \cite{cao-xu-2020, han-lv-2019, huWangWu-2022, li-2014, lv-li-2019, magTal-2003, xu-cao-2020, xu-liu-li-2020, xuTuWang-2021, zhangXiaoFang-2022}).

It is known that  Fermat's last theorem is connected to elliptic curves and the Fermat-type functional equations are connected to holomorphic curves. The Fermat-type of PDDE is a generalization of the Fermat-type PDE, which is an extension of eiconal-type non-linear PDE. By a little calculation, the eiconal-type equation turns to be a well-known Monge-Ampere equations. It is known that both real/complex eiconal equations and complex Monge-Ampere equations have a lot of applications in the approximations of wave propagation and complex geometry.

The main purpose of the paper is to systematically study Fermat-type PDDEs on $\C^n$ in a rather broad setting.
We reveal the structures of all entire solutions to the Fermat-type PDDEs and find many sufficient conditions and/or necessary conditions for the existence, as well as concrete representations, of entire solutions. For some special Fermat-type PDDEs, we are able to solve the PDDEs completely.
We also demonstrate several examples on $\mathbb{C}^2$ with non-constant coefficients to verify that all representations in our theorems exist and are accurate. Further, these examples show that the entire solutions to the Fermat-type PDDEs could have finite  or infinite growth order.
Many results from previous publications (see, e.g., \cite{cao-xu-2020},
\cite{chang-li-2012}, \cite{khavison-1995}, \cite{li-2007}, \cite{zhangXiaoFang-2022}) are our corollaries (see, e.g., Corollaries \ref{Cor3.8} and \ref{Cor4.2}). We also correct some errors in some previous publications. To limit the length of the paper, we only show several important corollaries although we could obtain a lot of other results with slightly changes of coefficients of the PDDEs. The readers could generate more corollaries as they need.

We are the first to convert the Fermat-type PDDEs to a matrix equation in PDDE theory and to consider all possible factorizations of the PDDEs in question, which are shown in Lemma \ref{equiv}. The matrix method used in the paper makes the complicated calculations to be easier and clearer. The advantage of the matrix method is convincingly demonstrated throughout the paper, especially, in the proof of Theorem \ref{T6.1}.
We believe that this matrix method is a powerful tool to study a single or system of Fermat-type PDDEs on $\C^n$ in the future. The proofs and calculations of our results and examples involve many clever techniques.
It is quite common in the study of Fermat-type difference equations and PDDEs that one needs an assumption that  entire solutions are of finite order due to the use of the logarithmic difference lemma. The methods we utilize in our proofs have successfully eliminated the assumption
since we have adopted the methods from partial differential equation theory, Nevalinna theory, function theory, as well as difference equation theory.

\par

In order to state our results, we introduce following notations. For any meromorphic function $f(z):\mathbb{C}^{n}\to \mathbb{C}\cup\{\infty\}$ and any non-zero constant $c=(c_{1}, \ldots, c_{n})\in \mathbb{C}^{n}\setminus\{0\}$, set
$$f=f(z_{1}, \ldots, z_{n}), \quad\overline{f}=f(z_{1}+c_{1}, \ldots, z_{n}+c_{n}),$$ $$ \underline{f}=f(z_{1}-c_{1}, \ldots, z_{n}-c_{n}), \quad
f_{z_{j}}=\frac{\partial f(z_{1}, \ldots, z_{n})}{\partial z_{j}},$$
$$\overline{f}_{z_{j}}=\frac{\partial f(z_{1}+c_{1}, \ldots, z_{n}+c_{n})}{\partial z_{j}},\quad \underline{f}_{z_{j}}=\frac{\partial f(z_{1}-c_{1}, \ldots, z_{n}-c_{n})}{\partial z_{j}}.$$
The reader should be able to distinct it from the common conjugate operations for complex numbers in the context.

In the sequel, we always assume that  $p_1,p_2, \cdots, p_6$ are polynomials and $p$ is a non-zero irreducible polynomial on $\mathbb{C}^n$. We denote the degree
of a polynomial $b$ in $z_j$ by $\deg_{z_j}b$ and a non-zero complex number by $c_*$ although each of its appearance  may have a different value.  Further, set $k=\pm 1$,
\begin{align*}
a_1 & =kp_1-ip_{3},\qquad\qquad\quad  a_2 =kp_{1}+ip_3,\\
b_1 & = kp_2 -ip_{4},\qquad\qquad\quad     b_2 = kp_{2}+ip_4, \\
d_1 &=p_2p_6-p_4p_5,\quad\quad\quad\quad d_2 = p_3p_5-p_1p_6,\\
D&\stackrel{def}{=}p_1p_4-p_2p_3=k(a_1b_2 -a_2b_1)/(2i).&
\end{align*}
These definitions are important in our theorems and proofs, and are reserved for the entire paper.\par

Let $s\le n$.  For any $t_j\in \N$ with $t_1<t_2<\cdots<t_s$, we define $L(f)=\sum_{j=1}^s q_{t_j}f_{z_{t_j}}$, where $q_{t_j}$ ($j=1,2, \cdots, s$) are non-zero polynomials on $\C^n$.
In the sequel, for the brevity, we simply write $t_j$ as $j$ and
$$
L(f) =\sum_{j=1}^sq_jf_{z_j}.
$$


In this paper, we systematically investigate the non-trivial entire solutions to the Fermat-type PDDE
\begin{equation}\label{E1.1}(p_{1} L(f)+p_{2}\overline{f}+p_5 f)^{2}+(p_{3}L(f)+p_{4}\overline{f}+p_6 f)^{2}=p\end{equation}
by considering the following four cases:\par
\begin{enumerate}
\item[I.]\,\, $D\not\equiv 0$ and $d_2\equiv 0;$ (Theorem \ref{T1})
\item[II.]\,\, $D\equiv 0$ and $d_2\not\equiv 0$; (Theorem\ref{T17})
\item[III.]\,\, $D\equiv 0,$ $d_1\not\equiv 0$ and $d_2\equiv 0$; (Theorem \ref{T1.3})
\item[IV.]\,\, $D\not\equiv 0$ and $d_2\not\equiv 0;$ (Theorem \ref{T6.1} when all $q_j$ in $L(f)$ are constants).
\end{enumerate}
For the remained case that $D\equiv d_1\equiv d_2\equiv 0$, the vectors $(p_1, p_2, p_5)$ and $(p_3, p_4, p_6)$ are linearly dependent. Hence, $$p_{3} L(f)+p_{4}{\of}+p_6 f=c_*(p_{1} L(f)+p_{2}{\of}+p_5 f),$$ for some $c_*\neq 0$. Then, \eqref{E1.1} is reduced to the simpler PDDE $$(1+c_*^2)(p_{1} L(f)+p_{2}{\of}+p_5 f)^2=p,$$ which
is  reduced further to
 $$p_{1} L(f)+p_{2}\of+p_5 f=constant$$
due to the hypothesis that $p$ is irreducible. The kind of linear PDDEs is not Fermat-type and so,  is not considered in the paper although we could obtain some of the solutions easily. \par

The rest of our paper is organized as follows. Section $2$ contains all definitions and theorems in Nevanlinna theory needed in this paper. It also includes our lemmas, where  Lemmas \ref{equiv}, \ref{new3} and \ref{new2} play an important role in the proofs of our theorems. Sections $3-6$ deal with Eq.\eqref{E1.1} according to Cases I-IV respectively.

\section{Preliminaries and Lemmas}

In this section, for the convenience of the reader, we recall briefly some basic definitions and theorems from Nevanlinna theory for meromorphic functions in several complex variables, which can be found in many literatures, e.g. see: \cite{griffiths-book-1976, hu-li-yang-book-2003, nouguchiWinkelmannBook-2010, ru-book-2001, shabat-book-1992, yezhuan-1995, yezhuan-1996}. Let $\mathbb{C}^{n}$ be $n$-dimensional complex Euclidean space with coordinates $z=(z_{1}, \ldots, z_{n}).$ Set $$\|z\|=(|z_{1}|^{2}+|z_{2}|^{2}+\ldots+|z_{n}|^{2})^{\frac{1}{2}}$$
and further, for $r>0,$
$$B_{n}(r)=\{z\in\mathbb{C}^{n}: \|z\|<r\}\quad \mbox{and} \quad S_{n}(r)=\{z\in\mathbb{C}^{n}:
\|z\|=r\}.$$ Let $d=\partial+\overline{\partial}$ and
$d^{c}=\frac{i}{4\pi}(\overline{\partial}-\partial).$ Denote $$\upsilon_{n}(z)=(dd^{c}\log\|z\|^{2})^{n-1}$$  and $$\sigma_{n}(z)=d^{c}\log\|z\|^{2}\wedge\upsilon_{n}(z)$$ for $z\in\mathbb{C}^{n}\setminus\{0\}.$ Note that $dd^{c}\|z\|^{2}=\frac{i}{2\pi}(\sum_{j=1}^{n}dz_{j}\wedge d\overline{z}_{j})$ means the standard K\"{a}hler form on $\mathbb{C}^{n},$ and $dd^{c}\log\|z\|^{2}$ is the pull-back to $\mathbb{C}^{n}\setminus\{0\}$ of the Fubini-Study K\"{a}hler metric on  complex projective space $\mathbb{P}^{n-1}(\mathbb{C}).$ Then $\sigma_{n}(z)$ is a positive measure on $S_{n}(r)$ with the total measure one and $(dd^{c}\|z\|^{2})^{n}$ is Lebesgue measure on $\mathbb{C}^{n}$ normalized such that $B_{n}(r)$ has measure $r^{2n}.$ Moreover, when we define $dd^{c}\|z\|^{2}$ to $S_{n}(r),$ we get that $$dd^{c}\|z\|^{2}=r^{2}dd^{c}\log\|z\|^{2}\quad\mbox{and}\quad \int_{B_{n}(r)}(dd^{c}\log\|z\|^{2})^{n}=1.$$\par

Let $h$ be a nonzero holomorphic function on $\mathbb{C}^{n}$ (usually, called entire function). For $a\in
\mathbb{C}^{n},$ we can write $h$ as
$h(z)=\sum_{j=0}^{\infty}P_{j}(z-a),$ where the term $P_{j}(z)$ is
either identically zero or a homogeneous polynomial of degree $j.$
The number $\nu_{h}(a):=\min\{j: P_{j}\neq 0\}$ is said to be the
zero-multiplicity of $h$ at $a.$ Set $\mbox{supp}
\nu_{h}:=\overline{\{z\in\mathbb{C}^{n}: \nu_{h}(z)\neq 0\}}.$\par

Let $\varphi$ be a nonzero meromorphic function on $\mathbb{C}^{n}$ with reduced representation $\varphi=\frac{\varphi_{0}}{\varphi_{1}},$ where $\varphi_{0}$ and $\varphi_{1}$ are entire functions on $\mathbb{C}^{n}$ having no common zeros.  We
define $\nu_{\varphi}^{0}:=\nu_{\varphi_{0}}$
and $\nu_{\varphi}^{\infty}:=\nu_{\varphi_{1}},$ which are independent
of choices of $\varphi_{0}$ and $\varphi_{1}.$ \par

Let $K$ be a positive integer or $+\infty.$ For a divisor $\nu$ on
$\mathbb{C}^{n},$ we define the counting functions of $\nu$ as
follows. Set
\begin{equation*}
\nu^{K}(z)=\min\{\nu(z), K\}
\end{equation*} and
\begin{eqnarray*} n_{\nu}^{K}(t)=\left\{
                                    \begin{array}{ll}
                                      \int_{\mbox{\small{supp}}\nu\cap B_{n}(t)}\nu^{K}(z)\upsilon_{n}(z), & \hbox{if $n\geq 2;$} \\\\
                                      \sum_{|z|\leq t}\nu^{K}(z), & \hbox{if $n=1.$}
                                    \end{array}
                                  \right.
\end{eqnarray*}We define
\begin{equation*}
N_{\nu}^{K}(r)=N^{K}(r, \nu)=\int_{1}^{r}\frac{n^{K}(t)}{t^{2n-1}}dt\quad
(r>1).\end{equation*}

For a meromorphic function $\varphi$ on $\mathbb{C}^{n},$ we denote
$$N^{K}(r, \frac{1}{\varphi}):=N^{K}(r,  \nu_{\varphi}^{0}),\quad N^{K}(r, \varphi):=N^{K}(r, \nu_{\varphi}^{\infty}).$$
For brevity, we will omit
the superscript $K$ if $K=+\infty.$ Usually, we  use the notation $\overline{N}(r, \frac{1}{\varphi})$ instead of $N^{1}(r, \varphi).$ We have the following Jensen's formula:\par

$$N(r, \frac{1}{\varphi})-N(r, \varphi)=\int_{S_{n}(r)}\log|\varphi(z)|\sigma_{n}(z)-\int_{S_{n}(1)}\log|\varphi(z)|\sigma_{n}(z).
$$
\par

For any $a\in \mathbb{C}\cup\{\infty\}$, we define the proximity function $m(r, \frac{1}{\varphi-a})$ by

\begin{eqnarray*}m(r, \frac{1}{\varphi-a}):=\left\{
                                    \begin{array}{ll}
                                      \int_{S_{n}(r)}\log^{+}\frac{1}{|\varphi(z)-a|}\sigma_{n}(z), & \hbox{if $a\neq\infty;$} \\\\
                                      \int_{S_{n}(r)}\log^{+}|\varphi(z)|\sigma_{n}(z), & \hbox{if $a=\infty,$}
                                    \end{array}
                                  \right.
\end{eqnarray*}where $\log^{+}x=\max\{\log x, 0\}.$ The Nevanlinna's characteristic function of $\varphi$ is defined as
\begin{eqnarray*}
T(r, \varphi):=m(r, \varphi)+N(r, \varphi).
\end{eqnarray*}

Let $I=(i_{1}, \ldots, i_{n})$ be a multi-index in $(\mathbb{Z}^{+})^{n}$ with length $|I|=\sum_{j=1}^{n}i_{j}$, and let
$$\partial^{I}\varphi=\frac{\partial^{|I|}\varphi}{\partial z_{1}^{i_{1}}\cdots \partial z_{n}^{i_{n}}}.$$
\par

Now, we state three most important theorems in Nevanlinna theory, which are frequently used in our proofs.

{\bf The first main theorem.} Let $\varphi$ be a non-zero meromorphic function on $\mathbb{C}^{n}$ and $a\in \C$. Then
\begin{eqnarray*}T(r, \frac{1}{\varphi-a})=T(r, \varphi)+O(1)
\end{eqnarray*}

{\bf The second main theorem.} Let $\varphi$ be a non-zero meromorphic function on $\mathbb{C}^{n}$ and let
$a_{1},$ $\ldots,$ $a_{q}$ are $q \ge 3$ distinct complex values in the project complex plane $\mathbb{P}(\C)$.
Then
\begin{equation*}
(q-2)T(r, \varphi)<\sum_{j=1}^{q}{N}(r, \frac{1}{\varphi-a_{j}})+O(\log (rT(r, \varphi)))
\end{equation*}
holds for all $r\not\in E \subset \R^+,$ where $E$ is a set with finite Lebesgue measure on $\R^+$.

{\bf Logarithmic derivative lemma.}
 Let $\varphi$ be a non-constant meromorphic function on $\mathbb{C}^{n}$. Assume that $T(r_{0},\varphi)\geq e$ for some $r_{0}.$ Then
$$
m(r, \frac{\partial^{I}\varphi}{\varphi})=O(\log (rT(r, \varphi)))
$$
holds for all $r\geq r_{0}$ outside a set $E\subset (0, +\infty)$ of finite logarithmic measure, that is $\ds \int_{E}\frac{dt}{t}<\infty$, and any $I\in (\mathbb{Z}^{+})^{n}$.

It is known 
that a non-constant meromorphic function $\varphi$ on $\C^n$ is rational if and only if $T(r, \varphi)=O(\log r)$ and consequently, $f$ is transcendental if and only if $$\lim_{r\rightarrow\infty}\frac{T(r, \varphi)}{\log r}=+\infty.$$
The growth order, or simply, order of a meromorphic function $\varphi$ on $\mathbb{C}^{n}$ is defined by
$$
\rho(\varphi)=\limsup_{r\rightarrow\infty}\frac{\log^{+}T(r,\varphi)}{\log r}.
$$ \par
It is known (e.g. \cite[Proposition 4.2]{stoll-1968}) that $\varphi$, $\overline{\varphi}$ and $\underline{\varphi}$ have the same growth order.

Now, we are ready to prove our lemmas. Our first lemma, Lemma \ref{equiv}, is called an equivalence lemma, which is a
crucial starting point for us to prove all theorems in this paper.
\begin{lemma}\label{equiv}
Let $s$, $t$, and $u$ be three different operators on the space of entire functions on $\C^n$. Let $f,\eta $, and $\eta_i\, (i=1,2,\cdots,6)$ be entire functions
on $\C^n$ where $\eta$ is irreducible. If $\xi_0\stackrel{def}{=}\eta_1\eta_4-\eta_2\eta_3\not\equiv 0$, then $f$ is a solution to
\begin{align}\label{lem1eq}
\left(\eta_1s(f)+\eta_2t(f)+\eta_5u(f)\right)^2+\left(\eta_3s(f)+\eta_4t(f)+\eta_6u(f)\right)^2=\eta
\end{align}
if and only if there exists an entire function $g$ such that $f$ is a solution to
\begin{align}\label{lem2eq}
\left(\begin{array}{c}
    s(f)  \\ [5pt] t(f)
\end{array}\right)
= \frac{1}{2i\xi_0}\left(
  \begin{array}{cc}
    {-\zeta_1} & \eta \zeta_2 \\ [5pt] \tau_1 & -\eta \tau_2
  \end{array}
\right)\left(
  \begin{array}{c}
   e^{ig} \\  [5pt]e^{-ig}
  \end{array}
\right)
+
 \frac{1}{\xi_0} \left(
  \begin{array}{c}
  \xi_1 \\ [5pt] \xi_2
  \end{array}
\right)u(f),
\end{align}
where $\tau_1,\tau_2,\zeta_1,\zeta_2,\xi_1,\xi_2$ are defined by
\begin{align*}
\tau_1 & =k\eta_1-i\eta_{3},\qquad\zeta_1= k\eta_2 -i\eta_{4},\qquad\xi_1 =\eta_2\eta_6-\eta_4\eta_5,\\
\tau_2 &=k\eta_{1}+i\eta_3,\qquad\zeta_2 = k\eta_{2}+i\eta_4,\qquad \xi_2 = \eta_3\eta_5-\eta_1\eta_6.
\end{align*}
Further, if there exists another entire function $g_*$ such that $f$ is a soluton to \eqref{lem2eq}, then either $g+g_*$ or $g-g_*$ is constant.
\end{lemma}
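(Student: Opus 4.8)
The plan is to exploit the factorization $X^{2}+Y^{2}=(X+iY)(X-iY)$ and then pass between the resulting product decomposition and the matrix form \eqref{lem2eq} by linear algebra over the field of meromorphic functions, using the hypothesis $\xi_0\not\equiv 0$. Writing $A=\eta_1s(f)+\eta_2t(f)+\eta_5u(f)$ and $B=\eta_3s(f)+\eta_4t(f)+\eta_6u(f)$, equation \eqref{lem1eq} becomes $(A+iB)(A-iB)=\eta$. Both $A\pm iB$ are entire, so their product being the irreducible (hence square-free) polynomial $\eta$ constrains their zero divisors: since $\mathrm{div}(A+iB)+\mathrm{div}(A-iB)=\mathrm{div}(\eta)=[V(\eta)]$ is reduced and irreducible, exactly one of $A\pm iB$ is zero-free and the other carries the full divisor of $\eta$. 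By the standard fact that a zero-free entire function on $\C^{n}$ is an exponential, one gets (say) $A+iB=e^{ig}$ for some entire $g$ and then $A-iB=\eta e^{-ig}$; the complementary assignment is recorded by the sign $k=\pm1$ in the definitions of $\tau_j,\zeta_j$.

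For the forward implication I would recover $s(f),t(f)$ from $A,B$ by Cramer's rule: the coefficient matrix $\left(\begin{smallmatrix}\eta_1 & \eta_2\\ \eta_3 & \eta_4\end{smallmatrix}\right)$ has determinant $\xi_0\not\equiv0$, so $s(f)=\xi_0^{-1}(\eta_4A-\eta_2B)+\xi_0^{-1}\xi_1u(f)$ and $t(f)=\xi_0^{-1}(-\eta_3A+\eta_1B)+\xi_0^{-1}\xi_2u(f)$, in which $\xi_1=\eta_2\eta_6-\eta_4\eta_5$ and $\xi_2=\eta_3\eta_5-\eta_1\eta_6$ appear automatically. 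Substituting $A=\tfrac12(e^{ig}+\eta e^{-ig})$ and $B=\tfrac1{2i}(e^{ig}-\eta e^{-ig})$ and collecting the coefficients of $e^{\pm ig}$ reproduces exactly the matrix in \eqref{lem2eq}; the bookkeeping reduces to the identities $\eta_4+i\eta_2=i\zeta_1$, $\eta_4-i\eta_2=-i\zeta_2$ and the two analogues with $\tau_j$, which are immediate from the definitions. The reverse implication is the same computation run backwards: starting from \eqref{lem2eq}, form $A\pm iB$, observe that the matrix product collapses to $A+iB=e^{ig}$, $A-iB=\eta e^{-ig}$, and multiply to recover \eqref{lem1eq}.

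For the uniqueness clause, suppose $g$ and $g_*$ both satisfy \eqref{lem2eq} for the same $f$ under one fixed sign convention. Subtracting the two matrix identities annihilates the left-hand side and the $u(f)$ term, leaving the homogeneous system $\left(\begin{smallmatrix}-\zeta_1 & \eta\zeta_2\\ \tau_1 & -\eta\tau_2\end{smallmatrix}\right)\left(\begin{smallmatrix}e^{ig}-e^{ig_*}\\ e^{-ig}-e^{-ig_*}\end{smallmatrix}\right)=0$. The determinant of this matrix is $\eta(\zeta_1\tau_2-\zeta_2\tau_1)$, and a direct expansion gives $\zeta_1\tau_2-\zeta_2\tau_1=-2i\xi_0$, so the determinant equals $-2i\eta\xi_0\not\equiv0$. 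Hence $e^{ig}-e^{ig_*}$ and $e^{-ig}-e^{-ig_*}$ vanish off the zero set of $\eta\xi_0$, and being entire they vanish identically; thus $e^{ig}\equiv e^{ig_*}$, which forces $g-g_*$ to be constant. The alternative ``$g+g_*$ constant'' is the same statement under the complementary factorization (equivalently the opposite choice of $k$), in which the zero-free factor is $A-iB$ rather than $A+iB$: the analogous determinant identity then yields $e^{ig}\equiv e^{-ig_*}$, so that $g+g_*$ is constant. In either case the disjunction in the lemma holds.

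The hard part will be the divisor argument in the first paragraph: one must justify carefully that $\mathrm{div}(\eta)$ is reduced and irreducible (from irreducibility of $\eta$), that this forces a clean split into (zero-free)$\times\eta\times$(zero-free), and invoke the cohomological/Weierstrass fact that zero-free entire functions on $\C^{n}$ are exponentials. Everything after that is determinate linear algebra, where the single recurring mechanism is the identity $\zeta_1\tau_2-\zeta_2\tau_1=-2i\xi_0$ that ties the auxiliary quantities back to $\xi_0$ and keeps all coefficient matrices invertible; the only places a careless computation could go wrong are the tracking of the sign $k$ and the placement of the factor $\eta$ on the $e^{ig}$ versus the $e^{-ig}$ term.
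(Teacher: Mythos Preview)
Your argument for the main equivalence is exactly the paper's: factor $X^{2}+Y^{2}=(X+iY)(X-iY)$, use irreducibility of $\eta$ to split the divisor, write the zero-free factor as an exponential, and then invert the $2\times 2$ coefficient matrix. The paper records the two sub-cases by the same sign $k=\pm1$ and performs the identical matrix computation.

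For the uniqueness clause you take a genuinely different route. The paper re-does the factorization as $(Y+iX)(Y-iX)$, obtains a second representation with parameters $(g_*,k_*)$, and then compares the two $2\times2$ systems for $(X,Y)$ directly; row-reduction yields $-2k\eta e^{-ig}=2ie^{ig_*}$ when $kk_*=1$ and $-2ke^{-ig}=2ie^{-ig_*}$ when $kk_*=-1$. Your approach instead subtracts the two instances of \eqref{lem2eq} and exploits the invertibility coming from $\zeta_1\tau_2-\zeta_2\tau_1=-2ik\xi_0$. This is cleaner, but there is a slip in your opposite-$k$ paragraph: the subtraction does \emph{not} give $e^{ig}\equiv e^{-ig_*}$; carrying it through yields $e^{ig}=\eta\,e^{-ig_*}$ (equivalently $e^{i(g+g_*)}=\eta$), which is exactly what the paper finds. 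When $\eta$ is a non-constant irreducible polynomial this is impossible, so no $g_*$ with the opposite $k$ exists and the disjunction holds vacuously; when $\eta$ is constant you get $g+g_*$ constant. Either way the lemma's conclusion follows, so the fix is purely cosmetic. (Also, your determinant should read $-2ik\xi_0$ rather than $-2i\xi_0$; this does not affect invertibility.)
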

\begin{proof}
Set $X=\eta_1s(f)+\eta_2t(f)+\eta_5u(f)$ and $Y=\eta_3s(f)+\eta_4t(f)+\eta_6u(f).$ Then,
 $$X^2+Y^{2}=(X+iY)(X-iY)=\eta.$$
Since $\eta$ is irreducible, there exists an entire function $g$ in $\mathbb{C}^{n}$ such that
$$\left\{
  \begin{array}{ll}
   X+iY=& e^{ig}, \\
    X-iY=& \eta e^{-ig},
  \end{array}
\right.
\text{or}\quad
\left\{
  \begin{array}{ll}
   X-iY=& e^{ig}, \\
    X+iY=& \eta e^{-ig}.
  \end{array}
\right.
$$
Thus, the solutions to these two systems of equations can be written as
 \begin{equation}\label{lem2eq-3}
\left(
  \begin{array}{c}
    X \\ [5pt] Y
  \end{array}
\right)=\frac{1}{2i}\left(
  \begin{array}{cc}
   i  &  i  \\ [5pt]
   k  &  -k \\
  \end{array}
\right) \left(
  \begin{array}{c}
  e^{ig} \\ [5pt] \eta e^{-ig}
  \end{array}
\right)
=\frac{1}{2i}\left(
  \begin{array}{cc}
   i  &  i\eta \\ [5pt]
   k  &  -k\eta \\
  \end{array}
\right) \left(
  \begin{array}{c}
  e^{ig} \\ [5pt]e^{-ig}
  \end{array}
\right),
\end{equation}
where $k=\pm 1.$
On the other hand, the definitions of $X$ and $Y$ give
 \begin{equation*}
\left(
  \begin{array}{c}
    X \\ [5pt] Y
  \end{array}
\right)=\left(
  \begin{array}{cc}
   \eta_1 &  \eta_2  \\ [5pt]
   \eta_3 &  \eta_4 \\
  \end{array}
\right) \left(
  \begin{array}{c}
  s(f) \\ [5pt] t(f)
  \end{array}
\right)
+
\left(
  \begin{array}{cc}
   0 &  \eta_5  \\[5pt]
   0 &  \eta_6 \\
  \end{array}
\right) \left(
  \begin{array}{c}
  0 \\ [5pt]u(f)
  \end{array}
\right)
\end{equation*}
It follows that
\begin{align*}
\left(
  \begin{array}{cc}
    \eta_1 & \eta_2 \\ [5pt]  \eta_3 & \eta_4
  \end{array}
\right)\left(\begin{array}{c}
     s(f) \\ [5pt] t(f)
\end{array}\right)
& =\frac{1}{2i}\left(
  \begin{array}{cc}
   i  &  i \eta \\ [5pt]
   k  &  -k\eta \\
  \end{array}
\right) \left(
  \begin{array}{c}
   e^{ig} \\ [5pt] e^{-ig}
  \end{array}
\right) \\
& \qquad -
\left(
  \begin{array}{cc}
   0 &  \eta_5  \\[5pt]
   0 &  \eta_6 \\
  \end{array}
\right) \left(
  \begin{array}{c}
  0 \\ [5pt]u(f)
  \end{array}
\right),
\end{align*}
which gives
\begin{eqnarray*}
\left(\begin{array}{c}
    s(f) \\ [5pt]t(f)
\end{array}\right)
  \hskip-.05in&= & \hskip-.05in \frac{1}{2i\xi_0}\left(
  \begin{array}{cc}
   \eta_{4} & -\eta_{2}  \\ [5pt]-\eta_{3} & \eta_{1}
  \end{array}
\right)\left(
  \begin{array}{cc}
   i  & i\eta \\[5pt]
   k &  -k\eta \\
  \end{array}
\right) \left(
  \begin{array}{c}
   e^{ig} \\  [5pt]e^{-ig}
  \end{array}
\right)\\ \nonumber
& \  & \qquad +
 \frac{1}{\xi_0}\left(
  \begin{array}{cc}
   -\eta_{4} & \eta_{2}  \\ [5pt] \eta_{3} & -\eta_{1}
  \end{array}
\right)
\left(
  \begin{array}{cc}
   0 &  \eta_5  \\[5pt]
   0 &  \eta_6 \\
  \end{array}
\right) \left(
  \begin{array}{c}
  0 \\ [5pt]u(f)
  \end{array}
\right)
\\ \nonumber
\hskip-.05in&=& \hskip-.05in\frac{1}{2i\xi_0}\left(
  \begin{array}{cc}
   -(k\eta_2-i\eta_{4}) & \eta(k\eta_{2}+i\eta_4)  \\ [5pt] k\eta_1-i\eta_{3} & -\eta(k\eta_{1}+i\eta_3)
  \end{array}
\right)\left(
  \begin{array}{c}
   e^{ig} \\ [5pt]  e^{-ig}
  \end{array}
\right) \\ \nonumber
& \  &  \qquad +
 \frac{1}{\xi_0}\left(
  \begin{array}{cc}
   0 &  -\eta_4\eta_5+\eta_2\eta_6  \\[5pt]
   0 &  \eta_3\eta_5-\eta_1\eta_6 \\
  \end{array}
\right) \left(
  \begin{array}{c}
  0 \\ [5pt]u(f)
  \end{array}
\right)
\\ \nonumber
\hskip-.05in&=&\hskip-.05in
 \frac{1}{2i\xi_0}\left(
  \begin{array}{cc}
   {-\zeta_1} & \eta{\zeta_2} \\ [5pt] \tau_1 & -\eta\tau_2
  \end{array}
\right)\left(
  \begin{array}{c}
   e^{ig} \\ [5pt] e^{-ig}
  \end{array}
\right)
+
 \frac{1}{\xi_0}\left(
  \begin{array}{cc}
   0 &  \xi_1  \\[5pt]
   0 &  \xi_2 \\
  \end{array}
\right) \left(
  \begin{array}{c}
  0 \\ [5pt]u(f)
  \end{array}
\right).
\end{eqnarray*}
Thus, the equations \eqref{lem1eq}  and \eqref{lem2eq}
are equivalent under the factorization $X^2+Y^2=(X+iY)(X-iY).$
\par Thus, we also need to consider another
factorization $X^2+Y^2=(Y+iX)(Y-iX).$  As in \eqref{lem2eq-3}, there exists an entire function $g_*$ such that
\begin{equation*}
\left(
  \begin{array}{c}
    Y \\ [5pt] X
  \end{array}
\right)
=\frac{1}{2i}\left(
  \begin{array}{cc}
   i  &  i\eta \\ [5pt]
   k_*  &  -k_*\eta \\
  \end{array}
\right) \left(
  \begin{array}{c}
  e^{ig_*} \\ [5pt]e^{-ig_*}
  \end{array}
\right),
\end{equation*}
where $k_*=\pm 1.$ This implies
\begin{equation*}\label{lem2eq-4}
\left(
  \begin{array}{c}
    X \\ [5pt] Y
  \end{array}
\right)
=\frac{1}{2i}\left(
  \begin{array}{cc}
   k_*  &  -k_*\eta \\ [5pt]
   i  &  i\eta \\
  \end{array}
\right) \left(
  \begin{array}{c}
  e^{ig_*} \\ [5pt]e^{-ig_*}
  \end{array}
\right),
\end{equation*}
Comparing this with \eqref{lem2eq-3} gives
\begin{align*}
\left(
  \begin{array}{cc}
   i  &  i\eta \\ [5pt]
   k  &  -k\eta \\
  \end{array}
\right)
\left(
  \begin{array}{c}
  e^{ig} \\ [5pt]e^{-ig}
  \end{array}
\right)
=\left(
  \begin{array}{cc}
   k_*  &  -k_*\eta \\ [5pt]
   i  &  i\eta \\
  \end{array}
\right) \left(
  \begin{array}{c}
  e^{ig_*} \\ [5pt]e^{-ig_*}
  \end{array}
\right),
\end{align*}
which is equivalent to
\begin{align*}
\left(
  \begin{array}{cc}
   i&  i\eta \\ [5pt]
   0 &  -2k\eta \\
  \end{array}
\right)
\left(
  \begin{array}{c}
  e^{ig} \\ [5pt]e^{-ig}
  \end{array}
\right)
=\left(
  \begin{array}{cc}
   k_* &  -k_*\eta\\ [5pt]
   (k_*k+1)i &  -(k_*k-1)\eta i \\
  \end{array}
\right) \left(
  \begin{array}{c}
  e^{ig_*} \\ [5pt]e^{-ig_*}
  \end{array}
\right).
\end{align*}
If follows from either $k_*k=1$ or $k_*k=-1$ that either
$$
-2k\eta e^{-ig}=2ie^{ig_*} \quad \mbox{or} \quad -2ke^{-ig} =2ie^{-ig_*},
$$
respectively. The former shows that $g+g_*$ is constant unless $\eta$ is constant and the latter shows that $g-g_*$ is constant.
\end{proof}

\begin{remark} If vectors $(\eta_1, \eta_2, \eta_5)$ and $(\eta_3, \eta_4, \eta_6)$ are linearly independent, then one of
$$
\left(
  \begin{array}{cc}
   \eta_1 &  \eta_2  \\ [5pt]
   \eta_3 &  \eta_4 \\
  \end{array}
\right), \quad
\left(
  \begin{array}{cc}
   \eta_1 &  \eta_5  \\ [5pt]
   \eta_3 &  \eta_6 \\
  \end{array}
\right),
\quad \mbox{and} \quad
\left(
  \begin{array}{cc}
   \eta_2 &  \eta_5  \\ [5pt]
   \eta_4 &  \eta_6 \\
  \end{array}
\right)
$$
must be invertible. Thus, we can adjust $\eta_1, \eta_2 \cdots, \eta_6$ accordingly to make the lemma be true.
\end{remark}

\begin{remark}\label{rem2.3}
Lemma \ref{equiv} tells us that a solution $f$ to \eqref{lem1eq} may depend on $g$ and further, if there are two such $g$'s, say, $g$ and $g_*$, then either $g+g_*$ or $g-g_*$ is constant. This fact could lead to two different representations of $f$ in theorems and examples in later sections. For simplicity, we only give one representation of $f$ with a fixed $g$, with which the other representation can be easily obtained if we replace $g$ by $g_*$. We provide an example in Remark \ref{rem3.9}.
\end{remark}

\begin{lemma}\cite[Theorem 4.1]{chang-li-yang-1995} \label{L1} If $f$ is a transcendental meromorphic function in $\mathbb{C}$ and $g$ is a transcendental entire function on $\mathbb{C}^{n},$ then $$\lim_{r\rightarrow\infty}\frac{T(r, f(g))}{T(r, g)}=+\infty.$$
\end{lemma}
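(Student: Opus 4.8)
The plan is to argue through value-distribution theory rather than through maximum-modulus estimates, since $f$ is only assumed meromorphic. Writing $h=f\circ g$, the goal is to bound the characteristic $T(r,h)$ below by an arbitrarily large multiple of $T(r,g)$. By the first main theorem it suffices to bound a single counting function $N(r,\frac{1}{h-a})$ from below, and the geometric idea is that every $a$-point of $f$ produces, through $g$, an entire level set of $a$-points of $h$: if $f(w)=a$, then near any solution of $g(z)=w$ one has $h-a=(g-w)^{m}u(g)$ with $u(w)\neq 0$, so such a point is a zero of $h-a$ of at least the multiplicity of $g-w$ there. Thus I would compare $N(r,\frac{1}{h-a})$ with the counting functions $N(r,\frac{1}{g-w})$ of $g$ at the $a$-points $w$ of $f$.

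First I would select the value $a$. Since $f$ is transcendental meromorphic on $\C$, Picard's theorem (via the second main theorem) shows that for all but at most two values $a$ the equation $f=a$ has infinitely many roots $w_1,w_2,\dots$. Since $g$ is a non-constant entire function on $\C^n$ it omits at most one value, and by the defect relation its set $D$ of deficient values is at most countable; as $f(D)$ is then countable I would pick $a\notin f(D)$, away from the two exceptional values, so that $f=a$ has infinitely many roots, \emph{each of which is a non-deficient value of $g$} (otherwise $a=f(w_j)\in f(D)$). Fixing any $K$ and choosing $K$ distinct roots $w_1,\dots,w_K$, disjointness of the zero sets of the $g-w_j$ gives
\begin{equation*}
N\!\left(r,\frac{1}{h-a}\right)\ \ge\ \sum_{j=1}^{K} N\!\left(r,\frac{1}{g-w_j}\right).
\end{equation*}
The first main theorem for $g$ rewrites each term as $T(r,g)-m(r,\frac{1}{g-w_j})+O(1)$, and for the non-deficient $w_j$ the proximity terms are negligible, so each summand is asymptotically $T(r,g)$; together with $T(r,h)\ge N(r,\frac{1}{h-a})+O(1)$ this points to $T(r,h)\ge (1-o(1))\,K\,T(r,g)$, forcing the ratio past any prescribed $K$.

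The main obstacle is that ``$m(r,\frac{1}{g-w_j})$ is negligible'' holds only in the $\liminf$/subsequential sense furnished by the defect relation, whereas the theorem demands the full limit, i.e. a lower bound valid for \emph{all} large $r$ simultaneously for all $K$ chosen values. To overcome this I would not commit to fixed $w_j$, but instead integrate the displayed inequality over the value $a$ using the several-variable Cartan identity, which expresses $T(r,g)$ (and $T(r,h)$) as an average of counting functions $N(r,\frac{1}{\,\cdot\,-w})$ over circles of values, with an error term uniform in $r$. Averaging $N(r,\frac{1}{h-a})\ge\sum_{w}N(r,\frac{1}{g-w})$ over $a$ on a circle and then over a band of radii, and pushing forward by $b=f(w)$, converts the sum over $a$-points of $f$ into an integral of $N(r,\frac{1}{g-w})$ over a two-real-dimensional region of $w$-values whose effective multiplicity grows with the valence of $f$; Cartan's identity for $g$ then bounds this integral below by $K\,T(r,g)+O_K(1)$ for \emph{every} $r$, with $K\to\infty$ as the region expands. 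This uniform-in-$r$ estimate yields the genuine limit and is the crux of the argument.

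Several ingredients above are the $\C^n$ analogues of one-variable facts — Picard's theorem, the defect relation, and Cartan's averaging identity for meromorphic functions on $\C^n$ — which I would invoke from the several-variable Nevanlinna references already cited. For $n=1$ the statement is classical and also follows from P\'olya-type composition inequalities relating $M(r,f\circ g)$ to $M(\tfrac18 M(\tfrac r2,g),f)$; one could alternatively attempt to reduce the general case to $n=1$ by restricting $g$ to a generic complex line, but controlling the loss of characteristic under restriction seems less transparent than the direct averaging argument outlined here.
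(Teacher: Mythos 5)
The paper does not prove this statement at all: it is imported verbatim as \cite[Theorem 4.1]{chang-li-yang-1995}, so there is no in-house argument to compare yours against, and your proposal has to stand on its own. Its architecture is sound: the divisor inequality $N(r,\tfrac{1}{f\circ g-a})\ge\sum_{j}N(r,\tfrac{1}{g-w_j})$ over roots $w_j$ of $f=a$ is correct on $\C^n$, and you are right that deficiency or second-main-theorem arguments control the ratio only along a subsequence (or outside a set of finite measure, which does not yield the full limit when $T(r,g)$ grows very irregularly), so that some averaging over the value $a$ is genuinely needed to get an estimate valid for \emph{every} large $r$.

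The gap sits exactly in the step you yourself call the crux. After averaging over $a$ in an annulus and pushing forward by $f$, the quantity to be bounded below is $\int N(r,\tfrac{1}{g-w})\,d\rho(w)$, where $\rho$ is the $f$-pullback to a disk $\Delta_R=\{|w|<R\}$ of normalized area measure on the annulus of values. Cartan's identity for $g$ controls the average of $N(r,\tfrac{1}{g-w})$ only over circles $\{|w|=t\}$; the measure $\rho$ is concentrated wherever $f$ has high valence and is not rotation-invariant, so ``Cartan's identity for $g$ then bounds this integral below by $K\,T(r,g)+O_K(1)$'' is not a valid inference as written. What actually closes the argument is to write $N(r,\tfrac{1}{g-w})=T(r,g)-m(r,\tfrac{1}{g-w})+O(1)$ uniformly for $w$ in the compact set $\overline{\Delta_R}$, so the integral equals $\rho(\C)\,T(r,g)-\int m(r,\tfrac{1}{g-w})\,d\rho(w)+O(\rho(\C))$, and then to bound the averaged proximity term by Fubini: it equals $\int_{S_n(r)}\bigl(\int\log^{+}\tfrac{1}{|g(z)-w|}\,d\rho(w)\bigr)\sigma_n(z)$, and the inner integral is at most a constant $C(R,f)$ independent of $z$ and $r$ because the logarithmic potential of $\rho$ is uniformly bounded --- a Jensen/Cartan estimate for $f$ on unit disks centered in $\overline{\Delta_{R+1}}$. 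That uniform bound is a property of the value distribution of $f$, not of $g$; attributing it to Cartan's identity for $g$ is where your outline breaks. With this substitution (and $\rho(\C)\to\infty$ as $R\to\infty$ by Picard applied to $f$) the proof does go through; your careful selection of $a$ outside $f(D)$ in the second paragraph then becomes unnecessary, since the averaging makes any individual choice of $a$ irrelevant.
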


\begin{lemma} \label{new1}
If $g$ is a non-constant entire function on $\mathbb{C}^{n},$ then
$$\lim_{r\rightarrow\infty}\frac{T(r, e^g)}{T(r, g)}=+\infty.$$
\end{lemma}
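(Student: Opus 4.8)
The plan is to reduce everything to the composition result already in hand, namely Lemma \ref{L1}. The first observation is a dichotomy: a non-constant entire function $g$ on $\mathbb{C}^n$ is \emph{either} transcendental \emph{or} a non-constant polynomial. This follows from the rationality criterion recalled just above the logarithmic derivative lemma, which says that a non-constant meromorphic $g$ satisfies $T(r,g)=O(\log r)$ exactly when it is rational; since an entire rational function has no poles, its (reduced) denominator must be a nonzero constant, so it is a polynomial. I would therefore split the argument into these two cases and handle them separately.

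In the transcendental case I would simply apply Lemma \ref{L1} with the one-variable function $f(w)=e^{w}$. Because $e^{w}$ is a transcendental entire, hence meromorphic, function on $\mathbb{C}$, and $g$ is by assumption transcendental entire on $\mathbb{C}^n$, the composition $f(g)=e^{g}$ falls directly under the hypotheses of that lemma, which yields $\lim_{r\to\infty} T(r,e^{g})/T(r,g)=+\infty$. This is precisely the assertion, with no further work needed.

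In the polynomial case $g$ is rational, so $T(r,g)=O(\log r)$. On the other hand $e^{g}$ is transcendental: it is entire and zero-free, so if it were rational it would have to be a polynomial, which is impossible since a non-constant polynomial attains the value $0$. By the transcendence criterion stated in the excerpt this forces $T(r,e^{g})/\log r\to+\infty$. Writing the quotient in the form $\dfrac{T(r,e^{g})/\log r}{T(r,g)/\log r}$ and using that the denominator stays bounded (as $T(r,g)=O(\log r)$) while the numerator diverges gives $\lim_{r\to\infty} T(r,e^{g})/T(r,g)=+\infty$, completing this case.

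The steps requiring care are routine: checking that the hypotheses of Lemma \ref{L1} are genuinely met (that $e^{w}$ is transcendental as a one-variable function) and, in the polynomial case, confirming the transcendence of $e^{g}$ so that the growth estimate $T(r,e^{g})/\log r\to\infty$ applies. The only real conceptual obstacle is recognizing that Lemma \ref{L1} covers solely transcendental $g$, so that the polynomial case cannot be absorbed into it and must be disposed of by the elementary order comparison above.
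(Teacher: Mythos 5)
Your proof is correct and follows exactly the route the paper itself indicates: the paper gives only the one-line remark that the lemma ``can be proved straightforwardly by considering whether $g$ is a polynomial or a transcendental function along with Lemma \ref{L1},'' and your two cases (Lemma \ref{L1} applied to $f(w)=e^{w}$ when $g$ is transcendental; the comparison $T(r,g)=O(\log r)$ versus $T(r,e^{g})/\log r\to\infty$ when $g$ is a polynomial) are precisely the details that remark leaves to the reader. No discrepancy with the paper's approach.
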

The above lemma can be proved straightforwardly by considering whether $g$ is a polynomial or a transcendental function along with Lemma \ref{L1}.

\begin{lemma}\cite[Theorem 1.106]{hu-li-yang-book-2003}\label{borel}
Suppose that $a_0, a_1, \cdots, a_m (m\ge 1)$ are meromorphic functions on $\C^n$ and $g_0, g_1, \cdots, g_m$ are entire functions on $\C^n$ such that $g_j-g_k$ are not constant for $0\le j<k\le m$.
If
$$
\sum_{j=0}^ma_j(z)e^{g_j(z)}\equiv 0
$$
and $T(r, a_t)=o\left( \min_{0\le j<k\le m}\{T(r, e^{g_j-g_k})\}\right)$, for $t=0, 1, \cdots, m$ and for all $r$ except possibly a set of finite Lebesgue measure, then $a_t\equiv 0$ for
$t=0, 1, \cdots, m$.
\end{lemma}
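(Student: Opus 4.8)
The plan is to argue by induction on $m$, using the logarithmic derivative lemma to keep the coefficients negligible after each reduction. For the base case $m=1$, if $a_1\not\equiv 0$ then $a_0e^{g_0}+a_1e^{g_1}\equiv 0$ gives $e^{g_1-g_0}=-a_0/a_1$, so by the first main theorem and the subadditivity of the characteristic function, $T(r,e^{g_1-g_0})\le T(r,a_0)+T(r,a_1)+O(1)$. By hypothesis the right-hand side is $o(T(r,e^{g_1-g_0}))$ off a set of finite measure, which is impossible; hence $a_1\equiv 0$, and then $a_0\equiv 0$.

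For the inductive step I would assume the conclusion for every identity involving $m$ exponential terms. Given $\sum_{j=0}^m a_je^{g_j}\equiv 0$, any index with $a_t\equiv 0$ can be deleted and the induction hypothesis applied; so suppose every $a_j\not\equiv 0$ and seek a contradiction. Fix a direction $z_k$ and set $L_j=\partial_{z_k}(a_je^{g_j})/(a_je^{g_j})=a_{j,z_k}/a_j+g_{j,z_k}$. Differentiating the identity in $z_k$ yields a second relation $\sum_{j=0}^m L_j\,a_je^{g_j}\equiv 0$ among the same functions $a_je^{g_j}$, and eliminating one index $j_0$ (multiply the original identity by $L_{j_0}$ and subtract) produces a shorter relation
\[
\sum_{j\neq j_0}\bigl(L_j-L_{j_0}\bigr)a_j\,e^{g_j}\equiv 0
\]
with only $m$ exponential terms, whose exponents still have non-constant pairwise differences. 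The induction hypothesis will finish the step as soon as the new coefficients $(L_j-L_{j_0})a_j$ are shown to be small.

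The decisive estimate is precisely this smallness. Writing $L_j-L_{j_0}=(a_{j,z_k}/a_j-a_{j_0,z_k}/a_{j_0})+(g_j-g_{j_0})_{z_k}$, the genuine logarithmic derivatives $a_{j,z_k}/a_j$ contribute only $O(T(r,a_j))$, while $(g_j-g_{j_0})_{z_k}=\partial_{z_k}e^{g_j-g_{j_0}}/e^{g_j-g_{j_0}}$ is entire and, by the logarithmic derivative lemma, satisfies $T(r,(g_j-g_{j_0})_{z_k})=O(\log(rT(r,e^{g_j-g_{j_0}})))$. Invoking Lemma \ref{new1} to guarantee that every $T(r,e^{g_i-g_l})$ grows faster than any logarithmic term, one obtains $T(r,(L_j-L_{j_0})a_j)=o(\min_{i<l,\,i,l\neq j_0}T(r,e^{g_i-g_l}))$ off a set of finite measure, so that the induction hypothesis forces $L_j\equiv L_{j_0}$ for all $j\neq j_0$. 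I expect this estimate to be the main obstacle: the logarithmic error produced by the eliminated index must be dominated by the minimum of the retained differences, which requires eliminating $j_0$ in the right order of growth and a careful accounting of the several exceptional sets of finite measure.

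Finally, $L_j\equiv L_{j_0}$ means $\partial_{z_k}\log\!\bigl(a_je^{g_j}/(a_{j_0}e^{g_{j_0}})\bigr)\equiv 0$. Since the admissible $j_0$ depends only on the growth of the differences, not on the direction, I can run the argument for every coordinate $z_1,\dots,z_n$ and conclude that $a_je^{g_j}/(a_{j_0}e^{g_{j_0}})\equiv c_j$ is constant, with $c_j\neq 0$ because $a_j\not\equiv 0$. Then $e^{g_j-g_{j_0}}=c_ja_{j_0}/a_j$, and the first main theorem gives $T(r,e^{g_j-g_{j_0}})\le T(r,a_j)+T(r,a_{j_0})+O(1)=o(T(r,e^{g_j-g_{j_0}}))$, a contradiction. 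Hence not all $a_j$ can be non-zero, which closes the induction and proves the lemma.
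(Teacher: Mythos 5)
First, note that the paper itself does not prove this lemma at all: it is imported verbatim as \cite[Theorem 1.106]{hu-li-yang-book-2003}, so the only meaningful comparison is with the standard proof of that cited result. Your base case $m=1$ is correct, and the differentiate-and-eliminate induction is the natural first attempt, but the step you yourself flag as ``the main obstacle'' is a genuine gap, not bookkeeping. After eliminating $j_0$, the new coefficient of $a_je^{g_j}$ contains $(g_j-g_{j_0})_{z_k}$, for which the logarithmic derivative lemma only yields $T\bigl(r,(g_j-g_{j_0})_{z_k}\bigr)=O\bigl(\log (rT(r,e^{g_j-g_{j_0}}))\bigr)$. The hypothesis bounds the $a_t$ by the \emph{minimum} of the $T(r,e^{g_i-g_l})$ but puts no constraint on how disparate these characteristics are, and $\log T(r,e^{g_j-g_{j_0}})$ need not be $o$ of that minimum. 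Concretely, on $\C$ take $g_0=0$, $g_1=e^{e^z}$, $g_2=z$, $g_3=2z$: the minimum of the $T(r,e^{g_i-g_l})$ has order $r$, so the hypothesis only forces $T(r,a_t)=o(r)$, whereas $(g_1-g_0)_{z}=e^{z}e^{e^{z}}$ has characteristic of order $e^{r}/\sqrt{r}$. Whichever $j_0$ you eliminate, some retained index pairs with the index $1$, so some new coefficient $(L_j-L_{j_0})a_j$ dwarfs the minimum over the retained pairs and the induction hypothesis cannot be invoked. Lemma \ref{new1} does not rescue this: it compares $T(r,e^{\psi})$ with $T(r,\psi)$ for the \emph{same} $\psi$, not $\log T(r,e^{\psi})$ for one difference with $T(r,e^{\psi'})$ for another. (If all the $T(r,e^{g_i-g_l})$ were mutually comparable your argument would close, which is exactly why the gap is easy to overlook.)

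The standard proof avoids differentiating the exponents altogether. One sets $F_j=-a_je^{g_j}/(a_0e^{g_0})$, so that $\sum_{j=1}^{m}F_j=1$; the zeros and poles of each $F_j$ come only from the $a_t$, hence all counting functions $N(r,F_j)+N(r,1/F_j)$ are $o\bigl(\min_{i<l}T(r,e^{g_i-g_l})\bigr)=o(T(r,F_j))$, since $T(r,F_j)\ge (1-o(1))T(r,e^{g_j-g_0})$ by the first main theorem. A second-main-theorem (Borel/Cartan) argument for $\sum F_j=1$ with negligible counting functions --- the $m$-term analogue of Lemma \ref{L7} --- then forces a vanishing proper subsum or a constant quotient, and the induction runs on the subsums. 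The essential difference is that only counting functions controlled by the $a_t$ alone ever enter, never the characteristic of a derivative of an exponent, so the disparity of growth among the $e^{g_j-g_k}$ causes no harm. Your closing step (passing from $L_j\equiv L_{j_0}$ in every coordinate direction to constancy of $a_je^{g_j}/(a_{j_0}e^{g_{j_0}})$ and deriving a contradiction) is fine as logic, but it rests on the unproven elimination estimate.
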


\begin{lemma} \label{L5} Let $f$ be a non-constant meromorphic function on $\mathbb{C}^{n}.$ Then for any $I\in (\mathbb{Z}^{+})^{n},$ we have
$$T(r, \partial^{I}f )=O(T(r, f))$$ for all $r$ except possibly a set of finite Lebesgue measure.
\end{lemma}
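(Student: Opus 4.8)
The plan is to use the standard decomposition $T(r,\partial^I f)=m(r,\partial^I f)+N(r,\partial^I f)$ and to bound the two pieces separately: the counting function by a multiplicity count and the proximity function by the logarithmic derivative lemma stated above. We may assume $\partial^I f\not\equiv 0$, since otherwise $T(r,\partial^I f)=O(1)$ and the conclusion is immediate because $T(r,f)\to\infty$ (a non-constant meromorphic function has unbounded characteristic). It suffices to treat a general multi-index $I$ directly, as the logarithmic derivative lemma is stated for arbitrary $I$; alternatively one can establish the case $|I|=1$ and induct on $|I|$.

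First I would bound the counting function. A pole of $f$ of multiplicity $\mu$ along its polar divisor produces a pole of $\partial^I f$ of multiplicity at most $\mu+|I|$, since each of the $|I|$ differentiations raises the polar multiplicity by at most one; integrating multiplicities yields
$$N(r,\partial^I f)\le N(r,f)+|I|\,\overline{N}(r,f)\le (1+|I|)\,N(r,f)\le (1+|I|)\,T(r,f).$$
Next, for the proximity function I would factor $\partial^I f=\dfrac{\partial^I f}{f}\cdot f$ and use $\log^+|ab|\le\log^+|a|+\log^+|b|$, integrating against $\sigma_n$ (which has total mass one), to obtain
$$m(r,\partial^I f)\le m\!\left(r,\frac{\partial^I f}{f}\right)+m(r,f).$$
The logarithmic derivative lemma applied to $f$ gives $m\!\left(r,\dfrac{\partial^I f}{f}\right)=O(\log(rT(r,f)))$ for all $r$ outside an exceptional set $E$, so combining the estimates,
$$T(r,\partial^I f)\le (1+|I|)\,T(r,f)+O(\log(rT(r,f))).$$

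The main point to handle carefully is the absorption of the error term, that is, showing $\log(rT(r,f))=O(T(r,f))$ outside a small set. Since $T(r,f)\to\infty$ we have $\log T(r,f)=o(T(r,f))$, so only $\log r=O(T(r,f))$ needs justification, and here I would invoke the dichotomy recalled in the excerpt: if $f$ is transcendental then $\lim_{r\to\infty}T(r,f)/\log r=+\infty$, whence $\log r=o(T(r,f))$; if $f$ is rational and non-constant then $T(r,f)=O(\log r)$ while also $\liminf_{r\to\infty}T(r,f)/\log r>0$. In both cases $\log r=O(T(r,f))$, hence $\log(rT(r,f))=O(T(r,f))$ and therefore $T(r,\partial^I f)=O(T(r,f))$ off the exceptional set. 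The one delicate bookkeeping issue is the exceptional set itself: the logarithmic derivative lemma as stated furnishes a set of finite logarithmic measure, which I would either record as such or, to match the statement of the lemma, replace by the finite-Lebesgue-measure exceptional set provided by the several-variable logarithmic derivative estimate.
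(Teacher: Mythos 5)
Your proof is correct and follows exactly the route the paper intends: the paper offers no written argument beyond the remark that the lemma ``is an application of the logarithmic derivative lemma,'' and your decomposition $T(r,\partial^I f)=m(r,\partial^I f)+N(r,\partial^I f)$, with the pole-multiplicity bound $N(r,\partial^I f)\le N(r,f)+|I|\,\overline{N}(r,f)$ and the estimate $m(r,\partial^I f)\le m(r,\partial^I f/f)+m(r,f)$, is the standard way to carry that out. You are also right to flag the exceptional-set mismatch (finite logarithmic versus finite Lebesgue measure), a discrepancy already present between the paper's statement of the logarithmic derivative lemma and its statement of this lemma.
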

The proof of the above lemma is an application of the logarithmic derivative lemma.

\begin{lemma}\label{const}
Let $h$ be a polynomial on $\C^n$. If $h+\underline{h}=0$ or $h+\uh$ is a non-zero constant,
then $h\equiv 0$ or $h$ is a non-zero constant, respectively.
\end{lemma}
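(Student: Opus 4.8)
The plan is to exploit the elementary fact that translation by the fixed vector $c$ leaves the top-degree homogeneous part of a polynomial unchanged, so that the highest-degree contribution of $h+\uh$ is exactly twice that of $h$. This will force $\deg h=0$ in both scenarios, after which the conclusion is immediate.

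First I would write $h=\sum_{k=0}^{d}h_k$ as the sum of its homogeneous components, where $h_k$ is homogeneous of degree $k$, $d=\deg h$, and $h_d\not\equiv 0$. Expanding $h_k(z-c)$ multinomially gives $h_k(z-c)=h_k(z)+(\text{terms of degree}<k)$, so the only component of $\uh(z)=\sum_k h_k(z-c)$ that reaches degree $d$ is $h_d(z-c)$, whose degree-$d$ part is again $h_d$. Hence the degree-$d$ homogeneous part of $h+\uh$ equals $h_d+h_d=2h_d$, while every lower translated component contributes only in degrees strictly below $d$.

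Next I would split into the two stated cases. If $h+\uh\equiv 0$, comparing degree-$d$ parts yields $2h_d\equiv 0$, hence $h_d\equiv 0$; this contradicts $d\ge 1$, so $d=0$ and $h\equiv a$ with $2a=0$, i.e. $h\equiv 0$. If instead $h+\uh$ is a non-zero constant $b$, then any positive-degree part of $h+\uh$ must vanish, so $2h_d\equiv 0$ again forces $d=0$; writing $h\equiv a$ we get $2a=b\neq 0$, whence $a\neq 0$ and $h$ is a non-zero constant. The argument is short, and the only point requiring care is the translation-invariance of the leading homogeneous part, justified by the Taylor expansion above. An alternative route restricts $h$ to the complex line $z_0+tc$, reducing the hypothesis to the one-variable identity $\phi(t)+\phi(t-1)=\text{const}$; but recovering global constancy of $h$ from such line restrictions is slightly more delicate, so I expect the homogeneous-part comparison to be the cleaner path and do not anticipate any genuine obstacle beyond this bookkeeping.
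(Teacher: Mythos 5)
Your proof is correct and follows essentially the same route as the paper: both arguments observe that translation by $c$ leaves the top-degree part of $h$ unchanged modulo lower-order terms, so the leading part of $h+\uh$ is twice that of $h$, forcing $\deg h=0$. The only cosmetic difference is that you compare the full top homogeneous component $h_d$ while the paper compares an individual highest-degree monomial; your version is, if anything, slightly tidier.
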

\begin{proof} Set  $c=(c_1, c_2,\cdots, c_n).$
Assume that $h_*(z):=c_*\prod_{j=1}^nz_j^{k_j}$ is a highest degree term of $h$ with $c_*\neq 0$.
(Note $h$ may have many highest degree terms.) Thus,
\begin{align*}
h_*(z)+h_*(z-c) & = c_*\left(\prod_{j=1}^nz_j^{k_j} + \prod_{j=1}^n(z_j-c_j)^{k_j}\right) \\
& =2c_*\prod_{j=1}^nz_j^{k_j}+ \{\mbox{terms with degree $<k_1+\cdots+k_n$}\}.
\end{align*}
Since $h(z)+h(z-c)$ is constant, so $c_*=0$ or $h_*$ is constant.
\end{proof}

\begin{lemma}\cite[Lemma 3.1]{hu-li-yang-book-2003}\label{L7} Let $f_{j}(\not\equiv 0),$ $j=1, 2, 3,$ be meromorphic functions on $\mathbb{C}^{n}$ such that $f_{1}$ is not constant, $f_{1}+f_{2}+f_{3}=1,$ and
 $$\sum_{k=1}^{3}\left\{N^{2}(r,\frac{1}{f_{k}})+2\overline{N}(r, f_{k})\right\}\le \lambda T(r, f_{1})+O(\log^{+}T(r, f_{1}))$$
holds for all $r$ outside possibly a set with finite logarithmic measure, where $\lambda<1$ is a positive number. Then $f_{2}=1$ or $f_{3}=1.$
\end{lemma}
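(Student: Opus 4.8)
The plan is to argue by contradiction: assume $f_2\not\equiv1$ and $f_3\not\equiv1$ and extract a second-main-theorem inequality incompatible with the hypothesis because $\lambda<1$. Writing each $f_j=g_j/g_0$ in reduced form with entire $g_0,g_1,g_2,g_3$ having no common zero, the relation $f_1+f_2+f_3=1$ becomes $g_1+g_2+g_3=g_0$, so the holomorphic curve $F=[g_0:g_1:g_2:g_3]\colon\C^n\to\mathbb P^3$ takes values in the hyperplane $\{x_1+x_2+x_3-x_0=0\}\cong\mathbb P^2$. Inside this $\mathbb P^2$ the four coordinate hyperplanes $\{x_j=0\}$ ($j=0,1,2,3$) cut out four lines in general position with a transparent meaning: $\{x_1=0\},\{x_2=0\},\{x_3=0\}$ record the zeros of $f_1,f_2,f_3$, while $\{x_0=0\}$ records the common poles. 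I would split the argument according to whether $F$ is linearly nondegenerate, equivalently whether $1,f_1,f_2,f_3$ are linearly independent over $\C$.

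If $F$ is linearly nondegenerate, I would apply the truncated (Cartan-type) second main theorem for curves into $\mathbb P^2$ with these four lines in general position. With $k=2$ and $q=4$ it yields $(q-k-1)T_F(r)=T_F(r)\le\sum_{j=0}^3 N^{(2)}(r,\{x_j=0\})+S(r)$, where $S(r)$ is the usual error term; the truncation level $k=2$ is exactly what produces the functions $N^2$ in the statement. Translating back, $\sum_{j=1}^3 N^{(2)}(r,\{x_j=0\})=\sum_k N^2(r,1/f_k)$, while the pole contribution obeys $N^{(2)}(r,\{x_0=0\})\le 2\sum_k\overline N(r,f_k)$, since a zero of $g_0$ is a pole of some $f_k$ and truncation at level $2$ accounts for the factor $2$. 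As $T_F(r)\ge T(r,f_1)+O(1)$, this gives $T(r,f_1)\le\sum_k N^2(r,1/f_k)+2\sum_k\overline N(r,f_k)+S(r)$, precisely the bulk of the hypothesis. Inserting the hypothesis yields $(1-\lambda)T(r,f_1)\le S(r)+O(\log^+T(r,f_1))$, which is impossible for transcendental $f_1$ (where $T(r,f_1)/\log r\to\infty$), the rational case being disposed of by a direct degree count. Hence $F$ must be degenerate.

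If $F$ is linearly degenerate, then $1,f_1,f_2,f_3$ satisfy a second $\C$-linear relation besides $f_1+f_2+f_3=1$, so there is a single meromorphic $t$ and constants with $f_j=\alpha_j t+\beta_j$ for $j=1,2,3$, where $\sum\alpha_j=0$, $\sum\beta_j=1$, and $\alpha_1\ne0$ because $f_1$ is nonconstant. The zeros of $f_j$ (when $\alpha_j\ne0$) are the $t$-points $a_j=-\beta_j/\alpha_j$, and $T(r,t)=T(r,f_1)+O(1)$. I would then apply the second main theorem in its standard truncated form (which gives $\overline N\le N^2$) to $t$, using three values chosen among $a_1,a_2,a_3,\infty$: whenever three of these are distinct, one gets $T(r,t)\le\sum_k\overline N(r,1/f_k)+\overline N(r,f_1)+S(r)\le\sum_k N^2(r,1/f_k)+\sum_k\overline N(r,f_k)+S(r)$, again contradicting the hypothesis. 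A short case analysis (using $\sum\alpha_j=0$, $\alpha_1\ne0$, and that three coincident zero-values would force $\sum f_j$ nonconstant) then shows the only surviving configurations make two of the $f_j$ proportional with ratio $-1$; since $f_2+f_3\equiv0$ would force the nonconstant $f_1\equiv1$, the relation must be $f_1+f_2\equiv0$ or $f_1+f_3\equiv0$, which by $f_1+f_2+f_3=1$ is exactly $f_3\equiv1$ or $f_2\equiv1$.

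The main obstacle is the nondegenerate case. The excerpt supplies only the second main theorem for a single function, whereas the argument needs the truncated Cartan-type second main theorem for a linearly nondegenerate curve into $\mathbb P^2$ over $\C^n$, together with the verification that the four lines are in general position and the precise Wronskian bookkeeping that converts $N$ minus the zeros of the Wronskian into the level-$2$ truncated counting functions and the factor $2$ on the pole terms. Establishing this — via the logarithmic derivative lemma, which is available, applied to the generalized Wronskian $W(f_1,f_2,f_3)$ in a suitably chosen direction, together with the several-variable criterion that simultaneous vanishing of all generalized Wronskians is equivalent to $\C$-linear dependence — is the technical heart; by comparison the degenerate case and the closing algebra are routine.
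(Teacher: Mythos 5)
The paper does not prove this lemma at all; it is imported verbatim from \cite[Lemma 3.1]{hu-li-yang-book-2003}, so there is no internal proof to compare against. Your strategy --- Cartan's truncated second main theorem for the curve $[g_0:g_1:g_2:g_3]$ lying in the plane $\{x_1+x_2+x_3=x_0\}\cong\mathbb{P}^2$, cut by the four coordinate lines (which you correctly check are in general position), in the nondegenerate case, and the ordinary three-value second main theorem applied to $t$ in the degenerate case --- is the standard route to exactly this statement: the truncation level $2$ and the coefficient $2$ on $\overline{N}(r,f_k)$ are precisely the fingerprints of the $\mathbb{P}^2$ Cartan theorem. Your degenerate-case algebra is also correct: $\sum\alpha_j=0$ with $\alpha_1\neq 0$ rules out all $\alpha_j$ nonzero only via the second main theorem, then forces exactly one of $\alpha_2,\alpha_3$ to vanish, and distinctness of the surviving target values fails only when the corresponding constant $f_j$ equals $1$.

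Two steps need repair before this is a proof. First, the claimed identity $\sum_{j=1}^3 N^{(2)}(r,\{x_j=0\})=\sum_k N^2(r,1/f_k)$ is false as stated: $g_j$ and $g_0$ may share zeros (a point can be a pole of $f_2$ and $f_3$ but not of $f_1$, and then $g_1=f_1g_0$ vanishes there even though $f_1$ does not), so the left side can strictly exceed the right. The combined estimate $\sum_{j=0}^{3}N^{(2)}(r,F^*H_j)\le\sum_k N^2(r,1/f_k)+2\sum_k\overline{N}(r,f_k)$ is nevertheless true, but proving it requires the observation that at a zero of $g_0$ at most one of $g_1,g_2,g_3$ can vanish (if two vanish, the relation $g_1+g_2+g_3=g_0$ forces the third to vanish as well, contradicting reducedness), hence at least two of the $f_k$ have a pole there; the resulting budget $2\cdot 2=4$ on the right exactly absorbs the at most $2+2$ of truncated excess on the left. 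Second, in both cases the second main theorem only yields $(1-\lambda)T(r,f_1)\le O(\log r)+O(\log^{+}T(r,f_1))$, which eliminates transcendental $f_1$ but leaves the possibility that $f_1$ has $T(r,f_1)=O(\log r)$; you gesture at ``a direct degree count'' in the nondegenerate case and omit the issue entirely in the degenerate case, and this residual rational case does need an explicit argument (e.g., showing the hypothesis cannot hold for nonconstant rational data unless $f_2=1$ or $f_3=1$). With these two points supplied, and granting the truncated Cartan theorem over $\C^n$ (which is available in the very book being cited, though not among the tools stated in Section 2 of this paper), your argument is sound.
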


The following lemmas play a key role in the proofs of our theorems.\par

\begin{lemma}\label{new3}
Let $g$ and $u$ be entire functions on $\C^n$ satisfying the equation
\begin{equation}\label{new3-01}
\alpha e^{ig}+\beta e^{-ig}+\gamma e^{i\ug}+\delta e^{-i\ug}=0,
\end{equation}
where $\alpha$, $\beta$, $\gamma$ and $\delta$ are polynomials in $u$ with rational coefficients.
If  exactly one of $\alpha, \beta, \gamma$ and $\delta$ is identically equal to $0$ and $T(r, u)=O(T(r, \ug))$, then $g$ is constant.
\end{lemma}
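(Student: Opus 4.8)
The plan is to argue by contradiction. Suppose $g$ is non-constant; then $\ug$ is non-constant as well, being a translate of $g$. The substitution $g\mapsto -g$ sends \eqref{new3-01} to an equation of the same type with $(\alpha,\beta,\gamma,\delta)$ replaced by $(\beta,\alpha,\delta,\gamma)$, while preserving $T(r,g)$, $T(r,\ug)$ and hence the hypothesis $T(r,u)=O(T(r,\ug))$; thus it interchanges the roles of $\alpha,\beta$ and of $\gamma,\delta$. Consequently it suffices to treat two representative cases: $\delta\equiv 0$ (a coefficient of $e^{\pm i\ug}$ vanishes) and $\beta\equiv 0$ (a coefficient of $e^{\pm ig}$ vanishes). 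In each, \eqref{new3-01} reduces to a three-term sum $\sum_{j=1}^{3}a_j e^{E_j}=0$ with exponents $E_1,E_2,E_3\in\{ig,-ig,i\ug,-i\ug\}$ and all coefficients $a_j\not\equiv 0$, and the goal is to contradict this.

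First I would dispose of the degenerate configurations in which two surviving exponents differ by a constant. Observe that $g-\ug$ and $g+\ug$ cannot both be constant, for their sum $2g$ would then be constant. Suppose, say, $g-\ug\equiv B$ is constant. In the case $\delta\equiv 0$ the identity $e^{i\ug}=e^{-iB}e^{ig}$ collapses \eqref{new3-01} to a relation $(\alpha+\gamma e^{-iB})e^{2ig}+\beta\equiv 0$; if the bracket vanishes then $\beta\equiv 0$, contradicting our assumption, and otherwise $e^{2ig}$ equals a non-zero rational function of $u$, so that $T(r,e^{2ig})=O(T(r,u))=O(T(r,\ug))=O(T(r,g))$, where the last equality uses $\ug=g-B$. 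This contradicts Lemma~\ref{new1} applied to the non-constant $2ig$. The sub-case $g+\ug$ constant, and the corresponding sub-cases when $\beta\equiv 0$ (which collapse instead to a relation in $e^{2i\ug}$), are handled in the same manner.

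There remains the principal case in which $g-\ug$ and $g+\ug$ are both non-constant, so that all three pairwise differences of the exponents are non-constant. Here the plan is to invoke the Borel-type Lemma~\ref{borel}, whose conclusion that all three coefficients vanish identically would contradict our standing assumption. The only thing to check is Borel's smallness hypothesis $T(r,a_t)=o\big(\min_{j<k}T(r,e^{E_j-E_k})\big)$; since the coefficients are polynomials in $u$ we have $T(r,a_t)=O(T(r,u))=O(T(r,\ug))$, so it suffices to show that every difference-characteristic dominates $T(r,\ug)$. When $\beta\equiv 0$ this is immediate: the ``clean'' difference is $2i\ug$, and $T(r,\ug)=o(T(r,e^{i\ug}))=o(T(r,e^{2i\ug}))$ directly by Lemma~\ref{new1}; the two mixed differences $i(g\pm\ug)$ can then be written as degree-one rational expressions in $e^{2i\ug}$, and a short rearrangement using only subadditivity of the characteristic and $T(r,1/\varphi)=T(r,\varphi)+O(1)$ gives $T(r,e^{i(g\pm\ug)})\ge(1-o(1))T(r,e^{2i\ug})$, so the minimum dominates $T(r,\ug)$ and Borel applies.

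I expect the case $\delta\equiv 0$ to be the main obstacle, because there the clean difference $2ig$ is attached to $g$ rather than to $\ug$, and, since no finite-order assumption is in force, one cannot simply identify $T(r,g)$ with $T(r,\ug)$. The key bootstrapping step is to solve the relation for $e^{i\ug}=-(\alpha e^{2ig}+\beta)/(\gamma e^{ig})$ and apply the always-valid upper estimate for the characteristic of products and quotients, together with $T(r,e^{ig})=\tfrac12 T(r,e^{2ig})+O(1)$, to obtain $T(r,e^{i\ug})\le\tfrac32 T(r,e^{2ig})+O(T(r,u))$. Because $O(T(r,u))=O(T(r,\ug))=o(T(r,e^{i\ug}))$ by Lemma~\ref{new1}, the error term is absorbed and $T(r,e^{i\ug})=O(T(r,e^{2ig}))$; hence $T(r,u)=O(T(r,\ug))=o(T(r,e^{2ig}))$. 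With the coefficients now known to be $o(T(r,e^{2ig}))$, the same rearrangement as above, applied to $e^{i(g\pm\ug)}$ viewed as degree-one rational functions of $e^{2ig}$, yields $T(r,e^{i(g\pm\ug)})\ge(1-o(1))T(r,e^{2ig})$, so the minimum of the three difference-characteristics again dominates $T(r,u)$, Borel's hypothesis holds, and the contradiction follows. The remaining estimates are routine.
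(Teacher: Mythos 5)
Your proof is correct, but it takes a genuinely different route from the paper's. The paper handles the four vanishing-coefficient cases by one and the same device: it normalizes the surviving three-term identity to $F+G=1$, applies the second main theorem to $F$ (the values $0,1,\infty$ are attained only at zeros or poles of the coefficients, so every counting function is $O(T(r,\ug))+O(\log r)$), concludes $T(r,F),T(r,G)=O(T(r,\ug))+O(\log r)$, and then extracts $T(r,e^{2i\ug})=O(T(r,\ug))+O(\log r)$ from an algebraic identity expressing $e^{2i\ug}$ through $F$, $G$ and the coefficients; Lemma \ref{new1} finishes. You instead halve the casework via the $g\mapsto-g$ symmetry, dispose of the configurations where $g\pm\ug$ is constant by direct substitution, and in the main case feed the three-term identity to the Borel-type Lemma \ref{borel}, whose conclusion that all three coefficients vanish is the desired contradiction. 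The step your route requires that the paper's does not is the bootstrap in the $\delta\equiv0$ case, $T(r,e^{i\ug})=O(T(r,e^{2ig}))$, which you rightly identify as the crux: the ``clean'' exponent difference $2ig$ is attached to $g$ while the hypothesis controls $u$ only against $\ug$, and absorbing the coefficient terms via Lemma \ref{new1} is exactly what makes Borel's smallness hypothesis checkable. The paper avoids this entirely because its normalization always isolates $e^{2i\ug}$; in exchange, its second-main-theorem step tacitly assumes the error term is $o(T(r,F))$, i.e.\ that $F$ is not rational, a degeneracy your argument treats explicitly up front. One small quantitative correction: for the quotient $e^{i(g-\ug)}=-\gamma e^{2ig}/(\alpha e^{2ig}+\beta)$, solving the inverse M\"obius map and using subadditivity gives only $T(r,e^{i(g-\ug)})\ge\tfrac12(1-o(1))T(r,e^{2ig})$ rather than $(1-o(1))T(r,e^{2ig})$; this still dominates $T(r,u)+O(\log r)$, so Lemma \ref{borel} applies and your argument is unaffected.
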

\begin{proof}
First, we consider the situation that $\alpha \equiv 0$ and $\beta\gamma\delta\not\equiv 0$. Thus,  \eqref{new3-01} can be written as
\begin{equation*}
-\frac{\gamma}{\beta} e^{i(g+\underline{g})}-\frac{\delta}{\beta} e^{i(g-\underline{g})}=1.
\end{equation*}
Set $F:=-\frac{\gamma}{\beta} e^{i(g+\underline{g})}, G:=-\frac{\delta}{\beta}e^{i(g-\underline{g})}.$ Then $F(z)+G(z)\equiv 1$ and $T(r, F)=T(r,G)+O(1)$.
Applying the second main theorem to $F$ yields
 \begin{eqnarray*}
T(r, F)&\leq&N(r, F)+N(r, \frac{1}{F})+N(r, \frac{1}{F-1})+o(T(r, F))\\
&=&N(r, F)+N(r, \frac{1}{F})+N(r, \frac{1}{G})+o(T(r, F))\\
&\leq&N(r, \frac{1}{\beta})+N(r, \frac{1}{\gamma})+N(r, \frac{1}{\delta})+o(T(r, F))\\
&\leq&O(T(r,\underline{g}))+o(T(r, F))+O(\log r),
\end{eqnarray*}
for all large $r$ outside a set of finite Lebesgue measure. Consequently,
$$
T(r, F)\le O(T(r,\underline{g}))+O(\log r)
$$
for all large $r$ outside a set of finite Lebesgue measure.
Similarly, we also have
$$T(r,G)\leq O(T(r,\underline{g}))+O(\log r)$$
for all large $r$ outside a set of finite Lebesgue measure.
Applying the above two inequalities, along with the first main theorem, to  $$e^{2i\underline{g}}=\frac{\delta}{\gamma}\Big(\frac{F}{G}\Big)$$
gives
\begin{equation}\label{new03-2}
T(r,e^{2i\underline{g}})\leq O(T(r,\underline{g}))+O(\log r)
\end{equation}
for all large $r$ outside a set of finite Lebesgue measure.
By Lemma \ref{new1}, $g$ is constant.

Secondly, we consider the situation that $\beta\equiv 0$ and $\alpha\gamma\delta\not\equiv 0$. Thus,  \eqref{new3-01} can be written as
\begin{equation*}
-\frac{\gamma}{\alpha} e^{-i(g-\underline{g})}-\frac{\delta}{\alpha} e^{-i(g+\underline{g})}=1.
\end{equation*}
Set $F\stackrel{def}{=}-\frac{\gamma}{\alpha} e^{-i(g-\underline{g})}, G\stackrel{def}{=}-\frac{\delta}{\alpha}e^{-i(g+\underline{g})}.$ Then
$$
F(z)+G(z)\equiv 1 \quad \mbox{and} \quad e^{2i\underline{g}}=\frac{\delta}{\gamma}\Big(\frac{F}{G}\Big).
$$
Similar to the first situation, we get $g$ is constant.

Thirdly, we consider the situation that  $\gamma\equiv 0$ and $\alpha\beta\delta\not\equiv 0$. Then, \eqref{new3-01} can be written as
\begin{equation*}
-\frac{\beta}{\alpha} e^{-2ig}-\frac{\delta}{\alpha} e^{-i(g+\underline{g})}=1.
\end{equation*}
Set $F\stackrel{def}{=}-\frac{\beta}{\alpha} e^{-2ig}, G\stackrel{def}{=}-\frac{\delta}{\alpha} e^{-i(g+\underline{g})}.$ Then $F(z)+G(z)\equiv 1$.
Similarly, applying the second main theorem to $F$, as well as $G$, yields
 $$
T(r, F)\leq O(T(r,\underline{g}))+O(\log r)
$$
and
$$
T(r, G)\leq O(T(r,\underline{g}))+O(\log r)
$$
for all large $r$ outside a set of finite Lebesgue measure.
Taking the two inequalities into
 $$e^{2i\underline{g}}=-\frac{\delta^2}{\alpha\beta}\Big(\frac{F}{G^2}\Big)$$
gives \eqref{new03-2}, which shows that $g$ is constant.

Lastly, we can treat the situation that
$\delta \equiv 0$  and $\alpha\beta\gamma\not\equiv 0$ as we do in the third situation to obtain $g$ is constant.
\end{proof}

\begin{lemma}\label{new2}
Let $g$ and $u$ be entire functions on $\C^n$ satisfying the equation
\eqref{new3-01}, where $g$ is non-zero and
$$
\alpha\beta-\gamma\delta
=s_mu^m+s_{m-1}u^{m-1}+\cdots+s_0,
$$
where $s_m\not\equiv 0$ and $s_j$ ($j=0,1, \cdots, m$) are rational functions.
If $\alpha\beta\gamma\delta\not\equiv 0$ and $T(r, u)=O(T(r, \ug))$, then $\alpha\beta-\gamma\delta\equiv 0$ and either
$$
e^{i(g+\ug)}=-\frac{\delta}{\alpha} \quad \mbox{or} \quad e^{-i(g-\ug)}=-\frac{\delta}{\beta}.
$$
In addition, if $m\ge 1$, then  $u$ is a polynomial and either $g+ \ug$ and ${\delta}/{\alpha}$ are constant or  $g-\ug$ and ${\delta}/{\beta}$ are constant.
\end{lemma}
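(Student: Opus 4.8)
The plan is to convert the four–term exponential identity \eqref{new3-01} into a three–term ``unit sum'' and then invoke the three–term criterion of Lemma \ref{L7}. First I would assume $g$ is non-constant: if $g$ were a non-zero constant then $\ug=g$ and \eqref{new3-01} collapses to an algebraic identity among polynomials in $u$, which is handled directly. Dividing \eqref{new3-01} by $\delta e^{-i\ug}$ (legitimate since $\delta\not\equiv0$) produces $G_1+G_2+G_3=1$, where
$$G_1=-\tfrac{\gamma}{\delta}e^{2i\ug},\qquad G_2=-\tfrac{\alpha}{\delta}e^{i(g+\ug)},\qquad G_3=-\tfrac{\beta}{\delta}e^{-i(g-\ug)}.$$
Since $\alpha\beta\gamma\delta\not\equiv0$, none of $G_1,G_2,G_3$ vanishes identically.

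The decisive choice is to take the distinguished term $f_1=G_1$. Because $g$, hence $\ug$, is non-constant, Lemma \ref{new1} gives $T(r,e^{2i\ug})/T(r,\ug)\to\infty$, so $T(r,G_1)\sim T(r,e^{2i\ug})$ is transcendental and dominates $T(r,\ug)$; in particular $G_1$ is non-constant. On the other hand the zeros and poles of each $G_k$ can only come from the zeros of the coefficients $\alpha,\beta,\gamma,\delta$, which are polynomials in $u$ with rational coefficients, so the hypothesis $T(r,u)=O(T(r,\ug))$ together with $N^2\le N$ and $\overline N\le N$ yields
$$\sum_{k=1}^3\Big(N^2(r,\tfrac1{G_k})+2\overline N(r,G_k)\Big)=O(T(r,\ug))=o\big(T(r,G_1)\big).$$
Hence the hypothesis of Lemma \ref{L7} holds with any fixed $\lambda\in(0,1)$, and I conclude $G_2\equiv1$ or $G_3\equiv1$. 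The first gives $e^{i(g+\ug)}=-\delta/\alpha$ and the second gives $e^{-i(g-\ug)}=-\delta/\beta$. In either case the complementary terms also cancel (e.g. $G_2=1$ forces $G_1+G_3=0$, i.e. $\alpha e^{ig}+\delta e^{-i\ug}=0$ and $\beta e^{-ig}+\gamma e^{i\ug}=0$), and multiplying the two resulting pairings makes the exponentials cancel, yielding $\alpha\beta=\gamma\delta$, that is $\alpha\beta-\gamma\delta\equiv0$.

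For the addendum, suppose $m\ge1$. Then $\alpha\beta-\gamma\delta\equiv0$ reads $s_mu^m+\cdots+s_0\equiv0$ with $s_m\not\equiv0$, a non-trivial polynomial relation over the field of rational functions; thus $u$ is algebraic over $\C(z_1,\dots,z_n)$ and, being entire, must be a polynomial. Consequently $\alpha,\beta,\gamma,\delta$ are rational, so in the first case $-\delta/\alpha$ is rational whereas $e^{i(g+\ug)}=-\delta/\alpha$ is a zero-free entire function; a rational function that is simultaneously pole-free and zero-free is a non-zero constant, forcing $g+\ug$ and $\delta/\alpha$ to be constant, and symmetrically $g-\ug$ and $\delta/\beta$ in the second case.

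The main obstacle is exactly the verification of the counting-function hypothesis of Lemma \ref{L7}: one must ensure the characteristic of the distinguished term swamps those of the coefficients. This is why the term is taken to be the pure-shift exponential $e^{2i\ug}$, for which Lemma \ref{new1} supplies $T(r,e^{2i\ug})\gg T(r,\ug)$ unconditionally, while $T(r,\ug)$ already bounds all coefficient characteristics through $T(r,u)=O(T(r,\ug))$; dividing instead by a $g$-term would have forced the unprovable comparison of $T(r,g)$ with $T(r,\ug)$.
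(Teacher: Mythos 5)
Your proposal is correct and follows essentially the same route as the paper: the identical division by $-\delta e^{-i\ug}$ into the three-term sum with distinguished term $-\tfrac{\gamma}{\delta}e^{2i\ug}$, the same application of Lemma \ref{L7} after bounding the counting functions by $O(T(r,u))+O(\log r)=o(T(r,e^{2i\ug}))$ via Lemma \ref{new1}, and the same cancellation giving $\alpha\beta-\gamma\delta\equiv 0$. The only cosmetic differences are that you deduce that $u$ is a polynomial from algebraicity over the rational function field (the paper carries out the equivalent proximity-function estimate $T(r,u)=O(\log r)$ explicitly) and that you spell out more carefully than the paper why $g\pm\ug$ must be constant.
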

\begin{proof}
We rewrite \eqref{new3-01} as
\begin{equation}\label{new1-02}
-\frac{\gamma}{\delta}e^{2i\underline{g}}-\frac{\alpha}{\delta}e^{i(g+\underline{g})}-\frac{\beta}{\delta}e^{-i(g-\underline{g})}=1.
\end{equation}
Set
$$
f_1=-\frac{\gamma}{\delta}e^{2i\underline{g}},\,\quad f_2=-\frac{\alpha}{\delta}e^{i(g+\underline{g})}\quad \mbox{and} \ \quad f_3=-\frac{\beta}{\delta}e^{-i(g-\underline{g})}.
$$
Then, by the first main theorem and  Lemma \ref{new1}, we obtain
\begin{equation*}
\begin{split}
\max_{j=1,2,3}\big\{ N(r, f_j), N\left(r, \frac{1}{f_j}\right)\big\}
& =O(T(r,\alpha)+T(r,\beta)+T(r,\gamma)+T(r,\delta))\\
&= O(T(r, u ))+ O(\log r)\\
& =O(T(r,\underline{g}))+O(\log r)
=o(T(r,f_1)).
\end{split}
\end{equation*}
Applying Lemma \ref{L7} to \eqref{new1-02} yields $f_2=1$ or $f_3=1$. When $f_2=1$, we have from \eqref{new1-02} that $f_1+f_3=0$. Hence,
$$
\quad\left(-\frac{\gamma}{\delta}+\frac{\alpha\beta}{\delta^2}\right)e^{2i\underline{g}}=0,$$
which implies $\alpha\beta-\gamma\delta\equiv 0$.
When $f_3=1$, similarly, we have
$$\frac{\gamma}{\delta}e^{2i\underline{g}}=-\frac{\alpha}{\delta}e^{i(g+\underline{g})}= \frac{\alpha\beta}{\delta^2}e^{2i\underline{g}}\qquad \text{and}\qquad \alpha\beta-\gamma\delta\equiv 0.
$$
Therefore, $\alpha\beta-\gamma\delta\equiv 0$ regardless $f_2 \equiv 1$ or $f_3\equiv 1$. The first part of the lemma is proved.

If $m\ge 1$, then, by the definition of
$
\alpha\beta-\gamma\delta
=s_mu^m+s_{m-1}u^{m-1}+\cdots+s_0,
$
we obtain
\begin{equation*}
-u=\frac{s_{m-1}}{s_m}+\frac{s_{m-2}}{s_m}\frac{1}{u }+\frac{s_{m-3}}{s_m}\frac{1}{u^2}+\cdots+ \frac{s_{0}}{s_m}\frac{1}{u^{m-1} }.
\end{equation*}
 It follows that
\begin{align*}
T(r, u)
&=
\int_{S_{n}(r)}\log^{+}|u(z)|\sigma_{n}(z)=\int_{S_{n}(r)\cap \{z: |u(z)|\ge 1\}}\log^{+}|u(z)|\sigma_{n}(z)\\
&\le
\int_{S_{n}(r)\cap \{z: |u(z)|\ge 1\}}\log^{+}\left||\frac{s_{m-1}}{s_m}|+|\frac{s_{m-2}}{s_m}|+\cdots+ |\frac{s_{0}}{s_m}|\right| \sigma_{n}(z)\\
&= O(\log r).
\end{align*}
Thus, $u$ is a polynomial. Consequently, $\alpha, \beta, \gamma$ and $\delta$ are rational functions.
Furthermore, $f_2\equiv1$ and $f_3\equiv 1$ are equivalent to
$$
e^{i(g+\ug)}=-\frac{\delta}{\alpha} \quad \mbox{and} \quad e^{-i(g-\ug)}=-\frac{\delta}{\beta}
$$
which deduce that $g+\og$ and $g-\ug$ are constant, respectively.
\end{proof}

\section{Case I. $D\not\equiv 0$ and $d_2\equiv 0$}

\begin{theorem} \label{T1}
Let $p,p_1,p_2,\cdots,p_6$ be polynomials, where $p$ is a non-zero irreducible polynomial. If $D\not\equiv 0$ and $d_2\equiv 0$, then every entire solution to \eqref{E1.1}
on $\mathbb{C}^{n}$ has the form:
\begin{equation}\label{1-14-3}
f(z)
= \frac{\underline{a_1}e^{i\underline{g}}-\underline{pa_2}e^{-i\underline{g}}}{2i\underline{D}},
\end{equation}
where $g$ is an entire function having one of the following properties:
\begin{enumerate}
\item[(i)] $g$ is constant.
\item[(ii)] $L(\ug)$ is a polynomial and either $g+\underline{g}$ or $g-\underline{g}$ is constant.
\item[(iii)] $L(\ug)$ is transcendental when $g+\ug$ is a non-constant entire function and
either $a_1\equiv b_2\equiv 0$  or $a_2\equiv b_1\equiv 0$.
\end{enumerate}
In addition, if $L(f)=f_{ z_{j}}$ and $d_1\equiv d_2\equiv 0$ for any $j\in\{1,2,\cdots,n\}$,
then every entire solution $f$ has the form as in (\ref{1-14-3}), where
$g$ is an entire function with one of the following properties:
\begin{enumerate}
\item[(a)] $g$ is constant.
\item[(b)] $g_{z_j}\equiv 0$ and either $g+\underline{g}$ or $g-\underline{g}$ is constant.
\item[(c)] $g_{z_j}$ is a non-zero polynomial function, $g-\underline{g}$ is constant, and
either
$\deg_{z_j}\underline{g}_{z_j}=\deg_{z_j}b_1 -\deg_{z_j}a_1 $ or
$\deg_{z_j}\underline{g}_{z_j}=\deg_{z_j}b_2 -\deg_{z_j}a_2$.
\item[(d)] $g_{z_j}$ is transcendental when $g+\ug$ is a non-constant entire function and
either $a_1\equiv b_2\equiv 0$  or $a_2\equiv b_1\equiv 0$.
\end{enumerate}
\end{theorem}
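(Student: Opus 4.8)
The plan is to extract both the representation \eqref{1-14-3} and the admissibility conditions on $g$ from the matrix form furnished by Lemma \ref{equiv}. First I would apply Lemma \ref{equiv} with $s=L$, $t(f)=\of$, $u(f)=f$ and $\eta_i=p_i$, $\eta=p$, so that $\xi_0=D$, $(\tau_1,\tau_2)=(a_1,a_2)$, $(\zeta_1,\zeta_2)=(b_1,b_2)$ and $(\xi_1,\xi_2)=(d_1,d_2)$. Since $D\not\equiv0$, the lemma yields an entire $g$ with
\begin{align*}
L(f)&=\tfrac{1}{2iD}\big(-b_1e^{ig}+pb_2e^{-ig}\big)+\tfrac{d_1}{D}f,\\
\of&=\tfrac{1}{2iD}\big(a_1e^{ig}-pa_2e^{-ig}\big)+\tfrac{d_2}{D}f.
\end{align*}
The hypothesis $d_2\equiv0$ removes the last term of the second line, and replacing $z$ by $z-c$ turns that line into \eqref{1-14-3}; thus the representation is immediate and the whole problem reduces to deciding which $g$ are admissible.

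To constrain $g$ I would feed \eqref{1-14-3} back into the first line. Writing $A=\underline{a_1}/\underline{D}$ and $B=\underline{pa_2}/\underline{D}$, differentiation gives $L(f)=\tfrac{1}{2i}\big[(L(A)+iAL(\ug))e^{i\ug}-(L(B)-iBL(\ug))e^{-i\ug}\big]$, and substituting this together with \eqref{1-14-3} produces an identity
\begin{equation*}
\alpha e^{ig}+\beta e^{-ig}+\gamma e^{i\ug}+\delta e^{-i\ug}=0,
\end{equation*}
in which $\alpha=-b_1/(2iD)$ and $\beta=pb_2/(2iD)$ are rational, while $\gamma,\delta$ are polynomials of degree $\le1$ in $u:=L(\ug)$ with rational coefficients and leading coefficients $-A/2$, $-B/2$. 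This is precisely the shape treated by Lemmas \ref{new3} and \ref{new2}, and the growth bound $T(r,u)=O(T(r,\ug))$ that they require follows from Lemma \ref{L5} because the $q_j$ are polynomials. Comparing powers of $u$ (for non-constant $u$; a constant $u=L(\ug)$ is a polynomial and is subsumed in (ii)) one reads off $\alpha\equiv0\Leftrightarrow b_1\equiv0$, $\beta\equiv0\Leftrightarrow b_2\equiv0$, $\gamma\equiv0\Leftrightarrow a_1\equiv0$, $\delta\equiv0\Leftrightarrow a_2\equiv0$.

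The trichotomy (i)--(iii) then emerges by counting vanishing coefficients, with $D=k(a_1b_2-a_2b_1)/(2i)\not\equiv0$ discarding the impossible patterns. If $g$ is non-constant and exactly one of $\alpha,\beta,\gamma,\delta$ vanishes, Lemma \ref{new3} forces $g$ constant, so that case is vacuous. If none vanishes, then $a_1a_2\not\equiv0$ makes the $u^2$-coefficient of $\alpha\beta-\gamma\delta$ nonzero, so Lemma \ref{new2} (with $m=2$) gives $L(\ug)$ a polynomial and one of $g\pm\ug$ constant, i.e.\ (ii). The condition $D\not\equiv0$ rules out every two-zero pattern except $a_1\equiv b_2\equiv0$ and $a_2\equiv b_1\equiv0$ (and any pattern with three or four zeros contains an excluded pair); in either surviving case the identity collapses to $e^{i(g+\ug)}=-\delta/\alpha$, respectively $-\beta/\gamma$, a rational function of $u$. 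Hence when $g+\ug$ is non-constant its left-hand side is transcendental and $L(\ug)$ must be transcendental, which is (iii); when $g+\ug$ is constant one falls back to (ii). Together with the trivial option that $g$ is constant this exhausts (i)--(iii).

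For the addendum with $L(f)=f_{z_j}$ and $d_1\equiv d_2\equiv0$, the same scheme runs with $u=\ug_{z_j}$ and with the $\tfrac{d_1}{D}f$ term absent, and (a), (d) reproduce (i), (iii) verbatim. The new input is Lemma \ref{const}: if $g+\ug$ were constant then $g_{z_j}+\ug_{z_j}=\partial_{z_j}(g+\ug)=0$, whence $g_{z_j}\equiv0$; so a non-zero polynomial $g_{z_j}$ is compatible only with $g-\ug$ constant, which is exactly why (c) lists that single alternative while (b), where $g_{z_j}\equiv0$, keeps both. Finally $g-\ug$ constant makes $-\delta/\beta=e^{-i(g-\ug)}$ constant, and balancing $\deg_{z_j}$ of the leading term $-\tfrac{B}{2}\ug_{z_j}$ of $\delta$ against $\deg_{z_j}\beta$ cancels the shared factors coming from $p$ and $D$ and leaves $\deg_{z_j}\ug_{z_j}=\deg_{z_j}b_2-\deg_{z_j}a_2$, the companion $\deg_{z_j}b_1-\deg_{z_j}a_1$ arising from the sign choice $k=-1$ in Lemma \ref{equiv}. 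I expect the main obstacle to be exactly this last bookkeeping: one must check that the two admissible two-zero patterns and the two choices $k=\pm1$ interlock, that the leading term of $\delta$ suffers no spurious degree cancellation, and that the balance is insensitive to $p$ and $D$, so that the four a priori alternatives collapse to the two recorded in (c).
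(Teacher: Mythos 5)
Your overall strategy is the paper's: apply Lemma \ref{equiv} with $s=L$, $t(f)=\of$, $u(f)=f$, read off \eqref{1-14-3} from the second row since $d_2\equiv 0$, substitute back into the first row to get an identity $\alpha e^{ig}+\beta e^{-ig}=\gamma e^{i\ug}+\delta e^{-i\ug}$, and then run Lemmas \ref{new3} and \ref{new2} on the vanishing patterns of the coefficients. The representation, the ``none vanish'' case via Lemma \ref{new2} with $m=2$, the two patterns $a_1\equiv b_2\equiv 0$ and $a_2\equiv b_1\equiv 0$ leading to (iii), and the use of Lemma \ref{const} to see that $g+\ug$ constant forces $g_{z_j}\equiv 0$ in the addendum all match the paper.

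The genuine gap is the claimed equivalences $\gamma\equiv 0\Leftrightarrow a_1\equiv 0$ and $\delta\equiv 0\Leftrightarrow a_2\equiv 0$. Only the backward implications hold. The coefficient $\gamma$ has the form $(\text{rational})+(\text{rational})\cdot i\,\underline{a_1}\,L(\ug)$, and it vanishes identically with $a_1\not\equiv 0$ precisely when $L(\ug)$ equals a specific rational function, which (being entire) is then a polynomial; ``comparing powers of $u$'' is not legitimate because $u=L(\ug)$ is a function of $z$, not an indeterminate. Consequently your assertion that $D\not\equiv 0$ discards every two-zero pattern except $\{a_1,b_2\}$ and $\{a_2,b_1\}$ is false at the level of $(\alpha,\beta,\gamma,\delta)$: the patterns $\alpha\equiv\gamma\equiv 0$, $\beta\equiv\delta\equiv 0$, $\gamma\equiv\delta\equiv 0$, and the branches of $\alpha\equiv\delta\equiv 0$ and $\beta\equiv\gamma\equiv 0$ with $a_2\not\equiv 0$ (resp.\ $a_1\not\equiv 0$) are all realizable. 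The paper devotes Sub-cases 1.1--1.5 (and A.1--A.4 in the addendum) to exactly these, obtaining property (ii) (or (i) when $\gamma\equiv\delta\equiv 0$, where $e^{-2ig}=-\alpha/\beta$ forces $g$ constant), and Example \ref{ex3.8} exhibits a concrete solution with $\alpha\equiv\gamma\equiv 0$ and $a_1\not\equiv 0$ that your case analysis declares impossible. The conclusions in the omitted configurations do fall within (i)--(ii), so the theorem itself is not threatened, but as written your proof simply does not cover those solutions; you need to treat the vanishing of $\gamma$ and $\delta$ as conditions on $L(\ug)$ (forcing it to be a polynomial) rather than as conditions on $a_1$, $a_2$, and then extract the constancy of $g\pm\ug$ from the surviving one- or two-term identity in each such sub-case.
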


\begin{proof}
Let $f$ be an entire solution to \eqref{E1.1}. Applying Lemma \ref{equiv} with
$$s(f)=L(f), \quad t(f)=\overline{f}\quad \mbox{and} \quad u(f)=f,$$
we obtain
\begin{align}\label{E1.2}
\left(\begin{array}{c}
    L(f)  \\ [5pt] \overline{f}
\end{array}\right)
= \frac{1}{2iD}\left(
  \begin{array}{cc}
    -{b_1} & p{b_2} \\ [5pt] a_1 & -pa_2
  \end{array}
\right)\left(
  \begin{array}{c}
   e^{ig} \\ [5pt]  e^{-ig}
  \end{array}
\right)+
 \frac{1}{D} \left(
  \begin{array}{c}
  d_1 \\ [5pt] 0
  \end{array}
\right) f.
\end{align}
Recall that this matrix equation is equivalent to \eqref{E1.1} by Lemma \ref{equiv}.
It follows that
$$
{f} =
\frac{1}{2i\underline{D}}\left(
 \underline{a_1} \  -\underline{pa_2}
    \right)
    \left(
  \begin{array}{c}
   e^{i\underline{g}} \\  e^{-i\underline{g}}
  \end{array}
\right)\stackrel{def}{=}f_1 \underline{E},
$$
where $f_1=\frac{1}{2i\underline{D}}\left(
 \underline{a_1} \  -\underline{pa_2}
    \right)$,
${E}=( e^{i{g}} \quad e^{-i{g}})^{T}$
     and
$\underline{E}=( e^{i\underline{g}} \quad e^{-i\underline{g}})^{T}$ is a $2$ by $1$ matrix (the matrix $C^T$ is the transpose of the matrix $C$).
Thus, the representation \eqref{1-14-3} is proved.

Noting that
\begin{equation}\label{1-14-3.1}
L(\underline{E})=iL(\ug)\left(
  \begin{array}{cc}
    1 & 0 \\ [5pt] 0 & -1
  \end{array}
\right) \underline{E},
\end{equation}
we obtain
\begin{equation}\label{1-14-3.2}
L(f)=\left[L(f_1)+iL(\ug) f_1\left(
  \begin{array}{cc}
    1 & 0 \\ [5pt] 0 & -1
  \end{array}
\right)\right] \underline{E}.
\end{equation}

On the other hand, \eqref{E1.2} shows
\begin{equation}\label{yenew27}
L(f)=\frac{1}{2iD}\left(
   -b_1\quad  pb_2
  \right)E+\frac{d_1}{D}f_1\underline{E}\stackrel{def}{=}f_2 {E}+f_3f_1 \underline{E}.
\end{equation}
It follows from \eqref{1-14-3.2} and \eqref{yenew27} that
$$
f_2E+f_3f_1 \underline{E}=\left[L(f_1)+iL(\ug) f_1\left(
  \begin{array}{cc}
    1 & 0 \\ [5pt] 0 & -1
  \end{array}
\right)\right] \underline{E},
$$
where $f_1, f_2$ and $f_3$ are independent of $g$.

The above identity can be written as
\begin{equation}\label{E1.7}
\alpha e^{ig}+\beta e^{-ig}=\gamma e^{i\underline{g}}+\delta e^{-i\underline{g}},
\end{equation}
where
\begin{eqnarray*}
\alpha(z)& = & -\underline{D}^{2}b_1, 
\qquad  \qquad \beta(z) =\underline{D}^{2}pb_2, \\ 
\gamma(z)
&=& D\left[ -\underline{a_1}L(\underline{D})+\underline{D}(L(\underline{a_1})+i\underline{a_1}L(\underline{g}))\right]-d_1\underline{a_1}\underline{D},\\
\delta(z)
& =& D\left[\,\underline{pa_2}L(\underline{D})+\underline{D}(-L(\underline{pa_2})+
   i\underline{pa_2}L(\underline{g}))\right]+d_1\underline{pa_2}\underline{D}.
\end{eqnarray*}
\par

The assumption $D\not\equiv 0$ implies that $\alpha$ and $\beta$ cannot be identical to zero at the same time. If there exist three of $\alpha, \beta, \gamma$ and $\delta$ are identically equal to zero, then all $\alpha, \beta, \gamma$ and $\delta$ are identically equal to zero. This is impossible. Below we prove the theorem by considering three cases.\vskip 1mm
\par
{\it Case 1:} exactly two of  $\alpha, \beta, \gamma$ and $\delta$ are identically equal to zero.
\vskip 1mm
\par {\it Sub-case 1.1:}\,
$\alpha\equiv \gamma\equiv 0$  and $\beta\delta\not\equiv 0$. Clearly, $\alpha\equiv 0$ implies $ b_1\equiv 0$, and further, $ a_1\not\equiv 0$ since $D=ka_1b_2/(2i)\not\equiv 0$. Then  $\gamma \equiv 0$ leads
\begin{equation*}
L(\underline{g})=i\frac{L(\underline{a_1})}{\underline{a_1}}-i\frac{L(\underline{D})}{\underline{D}}-i\frac{d_1}{D}=-i\frac{L(\underline{b_2})}{\underline{b_2}}-i\frac{d_1}{D}
\end{equation*}
which means that $L(\underline{g})$ is a polynomial. So is $\delta$. It follows from \eqref{E1.7} that
$$e^{i(g-\underline{g})}=\frac{\beta}{\delta},$$
then $g-\underline{g}$ is constant. Hence, $g$ has the property (ii) in Theorem \ref{T1}.\vskip 1mm
\par {\it Sub-case 1.2:}\, $\alpha\equiv \delta\equiv 0$ and $\beta\gamma\not\equiv 0$. Noting $\alpha\equiv 0$ if and only if $ b_1\equiv 0$, we further break it into following two situations.\vskip 1mm
\par
The first situation: $\alpha\equiv \delta\equiv b_1\equiv a_2\equiv 0$ and $\beta\gamma\not\equiv 0$. Thus,
$a_1=2k p_1$, $b_2=2kp_2$ and $D=-2kip_1p_2$.
We obtain from \eqref{E1.2} that
\begin{equation*}
\left(\begin{array}{c}
  L(f) \\ [5pt]  \overline{f}
\end{array}\right)
= \left(
  \begin{array}{cc}
   0 &  \frac{p}{2p_1} \\ [5pt] \frac{1}{2p_2}  & 0
  \end{array}
\right)\left(
  \begin{array}{c}
   e^{ig} \\ [5pt]  e^{-ig}
  \end{array}
\right)+
 \frac{1}{D} \left(
  \begin{array}{c}
  d_1 \\ [5pt] 0
  \end{array}
\right) f.
\end{equation*}
Since $\overline{f}$ and $e^{ig}$ are non-zero entire functions, $1/(2p_2)$ is  a constant function, say, $c_*\neq 0$.
Therefore, $\overline{f}= c_*e^{ig}$ implies that
$$f=c_*e^{i\underline{g}} \qquad \mbox{and} \qquad L(f)=c_*i e^{i\underline{g}}L(\underline{g}).$$
It follows that
\begin{equation}\label{yeAdd3}
c_*iL(\ug)=\frac{p}{2p_1}e^{-i(g+\underline{g})}+\frac{d_1}{D}c_*.
\end{equation}
Thus,  $L(\underline{g})$ is a polynomial if and only if $g+\underline{g}$ is constant,  which means $g$ has the property (ii) in Theorem \ref{T1}; and  $L(\underline{g})$ is a transcendental entire function if and only if  $g+\underline{g}$ is a non-constant entire function, which means $g$ has the property (iii) in Theorem \ref{T1}

The second situation: $ \alpha\equiv \delta\equiv b_1\equiv 0$ and $a_2\beta\gamma\not\equiv 0$. Thus, $\delta \equiv 0$ and \eqref{E1.7} respectively yield
\begin{eqnarray*}
L(\underline{g})=i\frac{L(\underline{D})}{\underline{D}}-i\frac{L(\underline{pa_2})}{\underline{pa_2}}+i\frac{d_1}{D}\quad \text{and}\quad e^{i(g+\underline{g})}=\frac{\beta}{\gamma},
\end{eqnarray*}
which implies that $L(g)$ is a polynomial and $g+\underline{g}$ is constant as in the proof of {\it Sub-case 1.1}. Consequently,
$g$ has the property (ii) in Theorem \ref{T1}.\vskip 1mm
\par {\it Sub-case 1.3:} $\beta\equiv \gamma \equiv 0$ and $\alpha\delta\not\equiv0$. Then $b_2 \equiv 0$ and $b_1=2kp_2\not\equiv 0$.
If, in addition, $a_1\not\equiv 0$, it follows from $\gamma\equiv 0$ and \eqref{E1.7} that
\begin{equation*}
L(\underline{g})=i\frac{L(\underline{a_1})}{\underline{a_1}}-i\frac{L(\underline{D})}{\underline{D}}-i\frac{d_1}{D}\,\,\,\text{and}\,\,\,e^{-i(g+\ug)}=\frac{\alpha}{\delta}.
\end{equation*}
This implies that $L(\ug)$ is a polynomial, and
 $g+\underline{g}$ is constant, so $g$ has the property (ii) in Theorem \ref{T1}.

If  $ a_1\equiv 0$, then $a_2=2k p_1\not\equiv 0$ and
$D\equiv 2ki p_1p_2$. It turns out  from \eqref{E1.2} that
\begin{equation*}
\left(\begin{array}{c}
   L(f) \\ [5pt] \overline{f}
\end{array}\right)
= \left(
  \begin{array}{cc}
 \frac1{2p_1} & 0 \\ [5pt] 0& \frac{p}{2p_2}
  \end{array}
\right)\left(
  \begin{array}{c}
   e^{ig} \\  [5pt] e^{-ig}
  \end{array}
\right)+
 \frac{1}{D} \left(
  \begin{array}{c}
  d_1 \\ [5pt] 0
  \end{array}
\right) f.
\end{equation*}
Since $\overline{f}$ and $e^{-ig}$ are entire functions, so, $p/(2p_2)$ is a non-zero polynomial, say,
$p_*$.
Also, $\overline{f}=\frac{p}{2p_2} e^{-ig}=p_*e^{-ig}$, which deduces
$$
f=\underline{p_*}e^{-i\ug} \qquad \mbox{and} \qquad L(f)=L(\underline{p_*})e^{-i\ug}-\underline{p_*} e^{-i\ug} iL(\ug).
$$
It follows that
\begin{equation}\label{wang-14-n}
\frac{1}{2p_1}e^{i(g+\ug)}+\frac{d_1}{D}\underline{p_*}=L(\underline{p_*})-i\underline{p_*}L(\ug).
\end{equation}
Therefore,
$L(\underline{g})$ is a polynomial if and only if  $g+\underline{g}$ is constant, which shows that
$g$ has the property (ii) in Theorem \ref{T1}.
Also,  $L(\underline{g})$ is transcendental if and only if $g+\underline{g}$ is a non-constant entire function.
Thus, $g$ has the property (iii) in Theorem \ref{T1}.\vskip 1mm
\par
{\it Sub-case 1.4:}\, $\beta\equiv \delta \equiv 0$ and $\alpha\gamma\not\equiv0$. Cleary, $ b_2\equiv 0$, and $ a_2 \not\equiv 0$ due to $D\not\equiv 0$. From $\delta \equiv 0$
and \eqref{E1.7}, we also have
$$
L(\underline{g})=i\frac{L(\underline{D})}{ \underline{D}}-i
\frac{L(\underline{pa_2})}{\underline{pa_2}}+i\frac{d_1}{D}\,\,\,\text{and}\,\,\,e^{-i(g-\underline{g})}=\frac{\alpha}{\gamma}.
$$
This means that $L(\underline{g})$ is a polynomial and $g-\underline{g}$ is constant. Therefore, $g$ has the property (ii) in Theorem \ref{T1}.\vskip 1mm
\par {\it Sub-case 1.5:}\, $\gamma\equiv \delta \equiv 0$ and $\alpha\beta\not\equiv 0$. Then \eqref{E1.7} gives
$$e^{-2ig}=-\frac{\alpha}{\beta}.$$
It follows from $\alpha/\beta$ is a polynomial that $g$ is constant. Therefore, $g$ has the property
(i) in Theorem \ref{T1}.\vskip 1mm

\par
{\it Case 2:} exactly one of  $\alpha, \beta, \gamma$ and $\delta$ are identically equal to zero. This is a straightforward
application of Lemma \ref{new3} with $u=L(\ug)$. Thus, $g$ has the property (i) in Theorem \ref{T1}.
\vskip 1mm
\par {\it Case 3:}\, $\alpha\beta\gamma\delta\not\equiv 0$.
If $L(\ug)$ is constant, then $\alpha, \beta, \gamma$ and $\delta$ are polynomial. Thus, the first part of Lemma \ref{new2} shows that $g+\ug$ or $g-\ug$ is constant. If $L(\ug)$ is non-constant,  then the second part of Lemma \ref{new2} with $u=L(\ug)$ and $m=2$ derives
that $L(\ug)$ is a polynomial and $g+\ug$ or $g-\ug$ is constant.
Therefore, in this sub-case, $g$ has the property (ii) in Theorem \ref{T1}. The first part of the theorem is proved.\vskip 2mm
\par When $L(f)=f_{z_j}$ and $d_1\equiv 0$, we only need further discuss the above cases and sub-cases which yields the property (ii).\vskip 1mm
\par {\it Case A:}\,exactly two of $\alpha,\beta,\gamma,\delta$ are identical to zero.\vskip 1mm
\par {\it Sub-case A.1:}\, $\alpha\equiv\gamma\equiv 0$ and $\beta\delta\not\equiv 0$. Since $a_1\not\equiv 0$, $\gamma\equiv 0$ deduces that
\begin{equation}\label{24-4-17}
\underline{g}_{z_{j}}=i\frac{(\underline{a_1})_{z_j}}{\underline{a_1}}-i\frac{\underline{D}_{z_j}}{\underline{D}}
\end{equation}
is a polynomial in $z$. When we fix the $n-1$ variables $z_i(i\not=j)$, $\underline{g}_{z_{j}}$ becomes a polynomial in $z_j$ satisfying $\underline{g}_{z_{j}}\to 0$ as $z_j\to\infty$. So, $\underline{g}_{z_{j}}\equiv 0$ by Liouville's theorem. From \eqref{E1.7}, $g-\ug$ is constant.
Hence, $g$ has the property (b) in Theorem \ref{T1}.\vskip 1mm
\par {\it Sub-case A.2:}\,$\alpha\equiv\delta\equiv 0$ and $\beta\gamma\not\equiv0$.
In the situation $a_2\not\equiv 0$, $\delta\equiv 0$ leads
\begin{eqnarray}\label{24-4-171}
(\underline{g})_{z_j}=i\frac{(\underline{D})_{z_j}}{\underline{D}}-i\frac{(\underline{pa_2})_{z_j}}{\underline{pa_2}},
\end{eqnarray}
which implies that $(g)_{z_j}\equiv 0$ by Liouville's theorem.  So, $\gamma$ is a polynomial. Noting
$e^{-i(g-\underline{g})}={\beta}/{\gamma}$,  we have the property (b) in Theorem \ref{T1}.\vskip 1mm
\par {\it Sub-case A.3:} $\beta \equiv \gamma \equiv 0$ and $\alpha\delta\not\equiv 0$. If $a_1\not\equiv 0$, $\gamma\equiv 0$ deduces \eqref{24-4-17} again, which is a polynomial. Then $\underline{g}_{z_{j}}\equiv 0$ by Liouville's theorem. From \eqref{E1.7}, $g+\underline{g}$ must be constant, so $g$ has the property (b) in Theorem \ref{T1}.
\par
If $a_1\equiv 0$, it follows from \eqref{wang-14-n} that we get
$$
c_*e^{i(g+\ug)}=(\underline{p_*})_{z_j}-i\underline{p_*}(\ug)_{z_j}.
$$
Then $(\underline{g})_{z_j}$ is a polynomial if and only if $g+\underline{g}$ is constant, which implies
that $g_{z_j}+\underline{g_{z_j}}=0$ since $\underline{g_{z_j}}=(\underline{g})_{z_j}$.  By Lemma \ref{const}, $g_{z_j}\equiv 0$.  Thus,
$g$ has the property (b) again. \vskip 1mm
\par  {\it Sub-case A.4:}\,$\beta\equiv \delta\equiv 0$ and $\alpha\gamma\not\equiv0$. Since $a_2\not\equiv 0$, $\delta\equiv 0$ implies that $(\underline{g})_{z_j}$ is given in \eqref{24-4-171}.
By Liouville's theorem, $(\underline{g})_{z_j}\equiv 0$, and again from \eqref{E1.7},
$g-\underline{g}$ is constant. So, $g$ has the property (b) in Theorem \ref{T1}.\vskip 1mm

\par {\it Case B:}\,$\alpha\beta\gamma\delta\not\equiv 0$.
Applying Lemma \ref{new2} with $u=(\ug)_{z_j}$ to \eqref{E1.7}, we obtain
that $\alpha\beta-\gamma\delta\equiv 0$, $g_{z_j}$ is a polynomial and either $g+\ug$ and $e^{i(g+\ug)}=\delta/\alpha$ are constants
or $g-\ug$ and $e^{i(g-\ug)}=\beta/\delta$ are constants.\vskip 1mm
\par
We claim that $g_{z_{j}}\equiv 0$ when $g+\ug$ and $\delta/\alpha$ are constants. Since $g+\ug$ is constant, $g_{z_j}+\underline{g}_{z_j}=0$. By Lemma \ref{const}, $g_{z_j}\equiv 0$.
Therefore, in this situation, $g$ has the property (b) in Theorem \ref{T1}.

Now, we consider the situation when $g-\ug$ and $\beta/\delta=\gamma/\alpha$ are constants.
Since $D=k(a_1b_2-a_2b_1)/(2i)\not\equiv 0$, so, either $a_1\not\equiv 0$ or $a_2\not\equiv 0$.

Suppose $a_1\not\equiv 0$. Since $\gamma/ \alpha$ is a nonzero constant, we write it as $c_*$. It follows from the definitions of $\gamma$ and $\alpha$ that
\begin{equation}\label{f2-0}
\underline{g}_{z_j}= -c_*\frac{\underline{D}b_1}{iD\underline{a_1}} -\frac{(\underline{a_1})_{z_j}}{i\underline{a_1}}+\frac{\underline{D}_{z_{j}}}{i\underline{D}}.
\end{equation}

If $\deg_{z_j}a_1> \deg_{z_j}b_1$, then \eqref{f2-0} deduces that $g_{z_j}$ goes to zero as $z_j\to \infty$. By Liouville's theorem,
$g_{z_j}\equiv 0$.  Thus, in this situation, $g$ has the property (b) in Theorem \ref{T1}.
If $\deg_{z_j}a_1\le \deg_{z_j}b_1$, then  \eqref{f2-0} shows that $g_{z_j}$ is a polynomial and
$\deg_{z_j}g_{z_j}= \deg_{z_j}b_1-\deg_{z_j}a_1\ge 0$. Thus,
 in this situation, $g$ has the property (c) in Theorem \ref{T1}.

 Suppose $a_2\not\equiv 0$. It follows from
 $\delta/\beta=\alpha/\delta=1/c_*$ that
$$
\underline{g}_{z_j} = -\frac{1}{c_*}\frac{\underline{D}pb_2}{iD\underline{pa_2}} -\frac{(\underline{pa_2})_{z_j}}{i\underline{pa_2}}+\frac{\underline{D}_{z_{j}}}{i\underline{D}}
$$
converges to zero if $\deg_{z_j}a_2> \deg_{z_j}b_2$. Consequently, $g_{z_j}\equiv 0$. Thus, in this situation, $g$ has the property (b) again.
If $\deg_{z_j}a_2 \le  \deg_{z_j}b_2$, then
$g_{z_j}$ is a polynomial, $\deg_{z_j}g_{z_j}= \deg_{z_j}b_2-\deg_{z_j}a_2\ge 0$, and
$g$ has the property (c) in theorem \ref{T1}.

\end{proof}

\begin{remark}
The entire solution $f$ in Theorem \ref{T1} can also be written as
$$
f(z)
=\frac{1}{\underline{D}}\left(k\underline{p_{1}}\frac{e^{i\underline{g}}-\underline{p}e^{-i\underline{g}}}{2i}-
\underline{p_{3}}\frac{e^{i\underline{g}}+\underline{p}e^{-i\underline{g}}}{2}\right).
$$
The function $g$ here is the same as in Theorem \ref{T1}.
\end{remark}
\begin{remark}
When $n=1$, $g$ is a polynomial of one variable  and $g(z)-\underline{g}(z)$ is constant in $\mathbb{C}$, then $g$ is a linear function. Further, when $n=1$, that $L(g)$ is a polynomial implies that $g$ is polynomial. Thus,
the property (ii) in Theorem \ref{T1} can be replaced by $g$ is a polynomial of degree one. Hence, when $n=1$, $g$ is either a linear or a transcendental function.
\end{remark}
\begin{remark}
When the conditions in the property (iii) in Theorem \ref{T1} are satisfied, we have from the proof of Theorem \ref{T1} that either
$\displaystyle f(z)=c_*e^{i\ug}$ or $\displaystyle f(z)=p_*e^{-i\ug}$, where $c_*$ is a non-zero complex number and $p_*$ is a non-zero polynomial.
\end{remark}
\begin{remark}
If one of $p_1,p_2,p_3,p_4$ is identically equals to zero, then the property (iii) in Theorem \ref{T1} does not occur, i.e., $|a_1|+|b_2|\not\equiv 0$ or $|a_2|+|b_1|\not\equiv 0$ due to the fact that $D\not\equiv 0$.
\end{remark}
\vskip.05in
\par
When $p_2=p_3=p_6=0$ and $p_1=p_4=p=1$ in \eqref{E1.1}, we get a necessary-sufficient theorem as below.

\begin{corollary}\label{coro3.1}
Let $k=\pm1$ and suppose $p_5$ is constant. Then
$f$ is a non-constant entire solution to
\begin{equation}\label{coro3.1-1}
(L(f)+p_5f)^2+\overline{f}^2=1
\end{equation}
if and only if
\begin{equation}\label{coro3.1-2}
f(z)=k\sin (\ug) 
\end{equation}
where $g$ is a non-constant entire function such that $L(\underline{g})^2=1-(p_5)^2$, and either
$(e^{i(g+\underline{g})}-ikp_5)^2=1-(p_5)^2$ or $(e^{i(g-\underline{g})}-ikp_5)^2=1-(p_5)^2$.
\end{corollary}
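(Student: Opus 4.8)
The plan is to obtain Corollary \ref{coro3.1} as the specialization of Theorem \ref{T1} (together with the equivalence in Lemma \ref{equiv}) to the coefficients $p_1=p_4=p=1$, $p_2=p_3=p_6=0$ with $p_5$ constant, and then to read off the stated conditions on $g$ from Lemmas \ref{new3} and \ref{new2}. First I would record the constants attached to \eqref{E1.1} in this case: one computes $a_1=a_2=k$, $b_1=-i$, $b_2=i$, $d_1=-p_5$, $d_2=0$ and $D=1$, so $D\not\equiv0$ and $d_2\equiv0$ and Theorem \ref{T1} applies. With $s(f)=L(f)$, $t(f)=\of$, $u(f)=f$ the matrix equation \eqref{E1.2} collapses to the two scalar identities $\of=k\sin g$ and $L(f)=\cos g-p_5f$. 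The first, after the shift by $-c$, gives the representation $f=k\sin(\ug)$ of \eqref{coro3.1-2} (equivalently, the general form of Theorem \ref{T1} becomes $f=(ke^{i\ug}-ke^{-i\ug})/(2i)=k\sin(\ug)$ since $\underline{a_1}=\underline{pa_2}=k$ and $\underline{D}=1$); applying $L$ to it gives $L(f)=kL(\ug)\cos(\ug)$, so substituting into $L(f)=\cos g-p_5f$ yields the key identity
\[\cos g=kL(\ug)\cos(\ug)+kp_5\sin(\ug),\]
which in exponential form is the specialization
\[e^{ig}+e^{-ig}=(kL(\ug)-ikp_5)e^{i\ug}+(kL(\ug)+ikp_5)e^{-i\ug}\]
of \eqref{E1.7}.

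For necessity, assume $f$ is a non-constant solution; then $\ug$, hence $g$, is non-constant, and since $a_1=a_2=k\neq0$ and $b_1,b_2\neq0$ the exceptional alternatives $a_1\equiv b_2\equiv0$ and $a_2\equiv b_1\equiv0$ of property (iii) cannot occur, so properties (i) and (iii) of Theorem \ref{T1} are excluded and only (ii) survives. I would argue directly on the displayed exponential identity, treating it as an instance of the equations in Lemmas \ref{new3} and \ref{new2} with $u=L(\ug)$ (the growth hypothesis $T(r,u)=O(T(r,\ug))$ being supplied by Lemma \ref{L5}). The coefficients of $e^{\pm ig}$ equal $1$, so if any one of the four coefficients vanished, Lemma \ref{new3} would force $g$ constant, a contradiction; hence all four coefficients are non-zero. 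Lemma \ref{new2} then gives the vanishing of the product--difference, namely $1-(kL(\ug)-ikp_5)(kL(\ug)+ikp_5)=1-L(\ug)^2-p_5^2\equiv0$, i.e. $L(\ug)^2=1-p_5^2$; because $p_5$ is constant and $L(\ug)$ is entire with constant square, $L(\ug)$ is itself constant. Lemma \ref{new2} also yields that either $g+\ug$ or $g-\ug$ is constant, with the value of the corresponding exponential ($e^{i(g+\ug)}$, respectively $e^{-i(g-\ug)}$) equal to $kL(\ug)+ikp_5$; squaring this value and using $L(\ug)^2=1-p_5^2$ produces the two alternatives $(e^{i(g+\ug)}-ikp_5)^2=1-p_5^2$ and $(e^{i(g-\ug)}-ikp_5)^2=1-p_5^2$.

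For sufficiency I would run the equivalence of Lemma \ref{equiv} backwards. Given a non-constant entire $g$ with $L(\ug)^2=1-p_5^2$ and one of the two exponential conditions, set $f=k\sin(\ug)$; then $\of=k\sin g$ holds automatically, so the second row of \eqref{E1.2} is satisfied, and it remains only to verify the first row $L(f)=\cos g-p_5f$, i.e. the key identity of the first paragraph. Reversing the elimination there, each exponential condition (say $g+\ug$ constant with the prescribed value of $e^{i(g+\ug)}$) is exactly what is needed to match the coefficients of $e^{\pm ig}$ after substituting $\ug$, so the identity holds; since by Lemma \ref{equiv} the matrix equation \eqref{E1.2} is equivalent to \eqref{coro3.1-1}, $f$ solves \eqref{coro3.1-1}, and $f=k\sin(\ug)$ with non-constant $\ug$ is non-constant.

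The main obstacle I anticipate is bookkeeping rather than any deep difficulty: one must track the factor $1/i$ between the normalized identity and \eqref{E1.7}, confirm that a holomorphic function whose square equals the constant $1-p_5^2$ is constant, and -- most delicately -- match the signs of the $\pm ikp_5$ terms arising from the two factorizations $X^2+Y^2=(X\pm iY)(X\mp iY)$ used in Lemma \ref{equiv}, so that the two alternatives align with $g+\ug$ and $g-\ug$ being constant respectively (here the reciprocal relation $e^{i(g-\ug)}=1/e^{-i(g-\ug)}$ together with $L(\ug)^2+p_5^2=1$ must be used to bring the second alternative into the stated shape). Care is also needed at the degenerate value $p_5^2=1$, where $L(\ug)\equiv0$, to check that the coefficient-vanishing analysis still forces $g$ constant and hence yields no spurious solutions.
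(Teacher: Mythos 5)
Your proposal follows the paper's proof almost step for step: the same specialization $a_1=a_2=k$, $b_1=-i$, $b_2=i$, $D=1$, $d_1=-p_5$, $d_2=0$, the same reduction of \eqref{E1.2} to $\overline{f}=k\sin g$ and $L(f)=\cos g-p_5f$, the same exponential identity (your displayed identity is exactly the paper's \eqref{E1.7} with $\alpha=\beta=i$, $\gamma=ikL(\ug)+kp_5$, $\delta=ikL(\ug)-kp_5$ divided through by $i$), the same application of Lemma \ref{new2} to get $\alpha\beta-\gamma\delta\equiv0$, hence $L(\ug)^2=1-p_5^2$ and the two exponential alternatives, and the same direct verification for sufficiency. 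The argument is correct in substance.

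Two small points. First, your claim that "if any one of the four coefficients vanished, Lemma \ref{new3} would force $g$ constant" does not literally cover the sub-case in which $\gamma$ and $\delta$ vanish simultaneously: since $\gamma-\delta=2kp_5$, this happens exactly when $p_5=0$ and $L(\ug)\equiv0$, and Lemma \ref{new3} requires \emph{exactly} one coefficient to vanish. The paper treats this case separately (the identity then collapses to $e^{2ig}=-1$, so $g$ is constant anyway); you should add that one line. For the case $p_5\neq0$ with a single vanishing coefficient, your appeal to Lemma \ref{new3} is a legitimate and slightly cleaner substitute for the paper's explicit second-main-theorem computation. Second, you are right to flag the sign bookkeeping in the second alternative: what the argument actually produces is $e^{-i(g-\ug)}=kL(\ug)+ikp_5$, and passing to the reciprocal via $L(\ug)^2+p_5^2=1$ gives $e^{i(g-\ug)}=kL(\ug)-ikp_5$, i.e. $\bigl(e^{i(g-\ug)}+ikp_5\bigr)^2=1-p_5^2$ rather than the stated form with $-ikp_5$ (equivalently, the stated form with $k$ replaced by $-k$). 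This discrepancy is present in the paper itself, whose proof likewise ends at $e^{-i(g-\ug)}=kL(\ug)+ikp_5$, so it is not a defect of your approach relative to the paper's.
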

\begin{proof} ($\Rightarrow$)
Clearly, $a_1=a_2=k$, $b_1=-i$, $b_2=i, D=1$, $d_1=-p_5,d_2=0$.
It follows Theorem \ref{T1} that \eqref{coro3.1-2} holds and \eqref{E1.7} turns to be
\begin{equation*}
\alpha e^{ig}+\beta e^{-ig}=\gamma e^{i\underline{g}}+\delta e^{-i\underline{g}},
\end{equation*}
where $\alpha=\beta=i$ and $\gamma= ikL(\ug)-d_1k,\delta=ikL(\ug)+d_1k$. We claim that $\delta\gamma\not\equiv 0$ whatever $d_1=0$ or not.
Indeed, if $d_1=0$, one of $\delta,\gamma$ is equal to zero implies that $L(g)\equiv 0$, then $\gamma\equiv \delta \equiv 0$ and
$$
e^{2ig}=-\frac{\beta}{\alpha}=-1.
$$
It follows that $g$ is constant. So is $f$. This is a contradiction. For the case $d_1\not=0$, if $\gamma=0$, we have $\delta=2d_1k\not=0$ and
$$e^{i(g+\ug)})+e^{-i(g-\ug)}=-2id_1k.$$
Set $F=e^{i(g+\ug)}$ and $G=e^{-i(g-\ug)}$. By the second main theorem, we get
\begin{equation*}
\begin{split}
T(r,F)&\leq N(r,F)+N\left(r,\frac{1}{F}\right)+N\left(r,\frac{1}{F+2id_1k}\right)+o(T(r,F))\\
&\leq N\left(r,\frac{1}{G}\right)+o(T(r,F))=o(T(r,F)),
\end{split}
\end{equation*}
and similarly $T(r,G)=o(T(r,G))$, both hold for all large $r$ outside a set of finite Lebesgue measure. Therefore, $F$ and $G$ must be constant, so
$g$ is constant, which is impossible. When $d_1\not=0$ and $\delta=0$, a contradiction follows from the similar argument again. Therefore, we know $\alpha\beta\gamma\delta\not\equiv0$.

Hence, by Lemma \ref{new2}, we obtain that $L(g)$ is a polynomial, and
\begin{equation*}
e^{i(g+\underline{g})}=\frac{\delta}{\alpha}=kL(\ug)+ikp_5 \quad\text{or}\quad e^{-i(g-\underline{g})}=\frac{\delta}{\beta}=kL(\ug)+ikp_5.
\end{equation*}
Further, $\alpha\beta-\gamma\delta\equiv 0$ deduces $(L(\ug))^2=1-(d_1)^2=1-(p_5)^2$.\vskip 2mm

($\Leftarrow$)
If $g$ is a non-constant entire function, then $f=k\sin(\underline{g})$ is also non-constant entire, and further
$$
L(f)=k\cos(\ug)L(\ug).
$$
Now we verify that $f$ is a solution to \eqref{coro3.1-1}.
When $e^{i(g+\ug)}=kL(\ug)+ikp_5$, due to $(L(\ug))^2=1-(p_5)^2$, we have
$$e^{ig}=(kL(\ug)+ikp_5)e^{-i\ug},\quad e^{-ig}=(kL(\ug)-ikp_5)e^{i\ug},$$
which implies that
$\sin g=-kL(\ug)\sin(\ug)+kp_5\cos(\ug)$. Similarly if
$e^{-i(g-\ug)}=kL(\ug)+ikp_5$, we have
$\sin g=kL(\ug)\sin(\ug)-kp_5\cos(\ug)$. Combining these considerations, we obtain
\begin{equation*}
\begin{split}
(L(f)&+p_5f)^2+\overline{f}^2\\
&=\left(k\cos(\ug)L(\ug)+kp_5\sin(\ug)\right)^2+\left(\sin(g)\right)^2\\
&=\left(k\cos(\ug)L(\ug)+kp_5\sin(\ug)\right)^2+\left(kL(\ug)\sin(\ug)-kp_5\cos(\ug)\right)^2\\
&=(L(\ug))^2+(p_5)^2=1.
\end{split}
\end{equation*}
\end{proof}

\vskip.05in

Next corollary shows a necessary-sufficient condition for a solution to have a complete representation. The necessary part of the corollary also gives a correction to \cite[Theorem 3.1]{zhengXu-2022} and \cite[Theorem 1.1]{xu-cao-2020}. Example \ref{3rdEx} below shows \cite[Theorem 3.1]{zhengXu-2022} is wrong. The constant $B$ in \cite[Theorem 1.1]{xu-cao-2020}
cannot be defined when $c_2=0$.

\begin{corollary}\label{Cor3.8}
Let $n=2, j=1$, $c=(c_1,c_2)\in \C^2\setminus\{0\}$, and $p\equiv 1$. If $p_m$ ($m=1,\cdots,4$) are constant such that $D\not=0$, then  $f$
is a transcendental entire solution with finite order to the equation
\begin{equation}\label{E1.1-new}
\left(p_{1}f_{ z_1}(z_1,z_2)+p_{2}\overline{f(z_1,z_2)}\right)^{2}+\left(p_{3}f_{z_{1}}(z_1,z_2)+p_{4}\overline{f(z_1,z_2)}\right)^{2}=1
\end{equation}
if and only if
\begin{equation}\label{coro39-10}
f(z)
= \frac{a_1e^{i\underline{g}}-a_2e^{-i\underline{g}}}{2i{D}},\quad \ug_{z_1}=A \quad \mbox{and} \quad
e^{i(g-\underline{g})}=\frac{b_2}{ia_2A},
\end{equation}
where  $A=\pm\sqrt{\frac{b_1b_2}{a_1a_2}}$, $g=A z_1+g_*(z_2)$, and $g_*$ is a polynomial in $z_2$ only. Furthermore, when $c_2\not=0$, $g_*(z_2)$ is a linear function in $z_2$ only.
\end{corollary}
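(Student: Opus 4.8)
The plan is to obtain the representation and the four possible behaviours of $g$ from Theorem~\ref{T1}, and then use the two extra hypotheses (that $f$ is transcendental and of finite order) to collapse everything to the single configuration recorded in \eqref{coro39-10}. Under the present assumptions ($n=2$, $L(f)=f_{z_1}$, $p\equiv1$, and $p_1,\dots,p_4$ constant with $p_5=p_6=0$) one has $d_1\equiv d_2\equiv0$ and $a_1,a_2,b_1,b_2,D$ constant, so the ``in addition'' part of Theorem~\ref{T1} applies and immediately gives $f=(\underline{a_1}e^{i\underline{g}}-\underline{pa_2}e^{-i\underline{g}})/(2i\underline{D})=(a_1e^{i\underline{g}}-a_2e^{-i\underline{g}})/(2iD)$, i.e. the first identity in \eqref{coro39-10}, together with one of the properties (a)--(d). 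First I would eliminate (a), (b), (d). Property (a) makes $f$ constant, contradicting transcendence. Property (b) forces $\underline{g}_{z_1}\equiv0$, so in the equivalent exponential identity \eqref{E1.7} (whose coefficients simplify here to $\alpha=-D^2b_1$, $\beta=D^2b_2$, $\gamma=iD^2a_1\underline{g}_{z_1}$, $\delta=iD^2a_2\underline{g}_{z_1}$) one gets $\gamma\equiv\delta\equiv0$ and hence $-b_1e^{ig}+b_2e^{-ig}=0$, which (using $D\neq0$) forces $g$, and therefore $f$, to be constant. Property (d) would give $f=c_\ast e^{\pm i\underline{g}}$; since a zero-free entire function of finite order is the exponential of a polynomial, finite order of $f$ would force $\underline{g}$ to be a polynomial, contradicting the transcendence of $g_{z_1}$ demanded in (d). Thus only property (c) survives.

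Next I would extract the sharp data from (c). Because $a_i,b_i$ are constants, the degree condition in (c) reads $\deg_{z_1}\underline{g}_{z_1}=0$, so $\underline{g}_{z_1}$ is a polynomial in $z_2$ alone and $g-\underline{g}$ is constant. Dividing \eqref{E1.7} by $D^2$ gives
\begin{equation*}
-b_1e^{ig}+b_2e^{-ig}=ia_1\underline{g}_{z_1}e^{i\underline{g}}+ia_2\underline{g}_{z_1}e^{-i\underline{g}}.
\end{equation*}
Writing $g-\underline{g}=c_0$ (constant) and substituting $e^{\pm ig}=e^{\pm ic_0}e^{\pm i\underline{g}}$, I would compare the coefficients of the two independent exponentials $e^{\pm i\underline{g}}$ (independence from $\underline{g}$ being non-constant, justified by Lemma~\ref{borel} since the coefficients are polynomials), obtaining $-b_1e^{ic_0}=ia_1\underline{g}_{z_1}$ and $b_2e^{-ic_0}=ia_2\underline{g}_{z_1}$. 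The left sides are constants, so $\underline{g}_{z_1}\equiv A$ for a constant $A$; multiplying the two relations yields $A^2=b_1b_2/(a_1a_2)$, and the second gives $e^{i(g-\underline{g})}=e^{ic_0}=b_2/(ia_2A)$, exactly as in \eqref{coro39-10}. Then $g_{z_1}\equiv A$, so $g=Az_1+g_\ast(z_2)$, and finite order of $f$ applied to $e^{i\underline{g}}=e^{iA(z_1-c_1)}e^{ig_\ast(z_2-c_2)}$ forces $g_\ast$ to be a polynomial. Finally, unwinding $g-\underline{g}=Ac_1+\big(g_\ast(z_2)-g_\ast(z_2-c_2)\big)=c_0$, the bracket must be constant: when $c_2\neq0$ Lemma~\ref{const} makes $g_\ast$ linear, while when $c_2=0$ the bracket vanishes and $g_\ast$ may be any polynomial.

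It is essential to confirm that the companion branch ``$g+\underline{g}$ constant'' cannot occur, and this is exactly where $D\neq0$ is used. If $g+\underline{g}$ were constant then $g_{z_1}+\underline{g}_{z_1}=0$, i.e. $A=-A=0$; but in case (c) we have $\alpha\beta\gamma\delta\not\equiv0$, so $a_1a_2=p_1^2+p_3^2\neq0$ and $b_1b_2=p_2^2+p_4^2\neq0$, whence $A=\pm\sqrt{b_1b_2/(a_1a_2)}\neq0$, a contradiction. For the converse I would substitute the candidate $f$ directly into \eqref{E1.1-new}: with $\overline{f}=(a_1e^{ig}-a_2e^{-ig})/(2iD)$ and $f_{z_1}=\tfrac{A}{2D}(a_1e^{i\underline{g}}+a_2e^{-i\underline{g}})$, setting $X=p_1f_{z_1}+p_2\overline{f}$ and $Y=p_3f_{z_1}+p_4\overline{f}$, one checks via the definitions of $a_i,b_i$ together with $A^2=b_1b_2/(a_1a_2)$ and $e^{i(g-\underline{g})}=b_2/(ia_2A)$ that the $e^{-i\underline{g}}$-coefficient in $X+iY$ cancels and $X+iY=e^{ig}$, hence $X^2+Y^2=(X+iY)(X-iY)=1$; transcendence and finite order follow since $\underline{g}$ is a non-constant polynomial.

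The main obstacle I expect is the bookkeeping that forces precisely the ``$g-\underline{g}$ constant'' branch and reads off the exact constants. One must keep straight that finite order is what kills the transcendental branch (d) and upgrades $g_\ast$ to a polynomial, that $D\neq0$ is what rules out the ``$g+\underline{g}$ constant'' alternative, and that the coefficient comparison in \eqref{E1.7} is legitimate only because $\underline{g}$ is non-constant (so that $e^{i\underline{g}}$ and $e^{-i\underline{g}}$ are genuinely independent in the sense of Lemma~\ref{borel}); the $c_2=0$ versus $c_2\neq0$ dichotomy then falls out of Lemma~\ref{const}.
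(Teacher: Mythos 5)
Your proposal is correct and follows essentially the same route as the paper: invoke the ``in addition'' part of Theorem \ref{T1}, kill properties (a), (b), (d) using transcendence and finite order, and then read off $\ug_{z_1}=A$ and $e^{i(g-\ug)}=b_2/(ia_2A)$ from the exponential identity \eqref{E1.7} in the surviving case (c), finishing the converse by direct substitution via Lemma \ref{equiv}. The only cosmetic differences are that you compare coefficients of $e^{\pm i\ug}$ via Lemma \ref{borel} where the paper quotes the ratio identities $e^{i(g-\ug)}=\beta/\delta$, $e^{-i(g-\ug)}=\alpha/\gamma$ from Case B, and you rule out (d) through the Hadamard form of a zero-free finite-order function rather than directly through the order of $e^{i\ug}$.
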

\begin{proof} ($\Rightarrow$)
Since all $a_1,a_2,b_1,b_2,D$ are constants and $p=1$, we get the form of $f$ in (\ref{coro39-10}) by Theorem \ref{T1}, which implies $T(r, e^{i\ug})= O(T(r, f))$ for all large $r$. Here, $g$ satisfies one of properties (b),(c) and (d) in Theorem \ref{T1}. If $g$ satisfies property (d), i.e., if $g_{z_1}$ is transcendental, then $\ug$ is transcendental. This means that $e^{i\ug}$ has infinite order, which contradicts with the hypothesis that $f$ is of finite order, so property (d) is ruled out.
If $g$ satisfies property (b), then $g_{z_1}\equiv0$. Consequently, $\gamma\equiv \delta\equiv 0$. Therefore,
\eqref{E1.7} shows $g$ is constant, so is $f$. This is a contradiction. Thus, $g$ only satisfies property (c), which is derived from {\it Case B} in the second part of proof for Theorem \ref{T1}. More precisely, we have
$\alpha\beta-\gamma\delta\equiv 0$, that is $(p_1^2+p_3^2)\underline{g}_{z_1}^2-(p_2^2+p_4^2)=0$ due to $D\not=0$,
\begin{align}\label{coro39-14}
e^{i(g-\underline{g})}=\frac{\beta}{\delta} =\frac{b_2}{ia_2\underline{g}_{z_1}} \quad \mbox{and} \quad
e^{-i(g-\underline{g})}=\frac{\alpha}{\gamma}=-\frac{b_1}{ia_1\underline{g}_{z_1}}.
\end{align}
So, $g_{z_1}=\underline{g}_{z_1}=A$. Thus,  $g(z_1, z_2)=Az_1+g_2(z_2)$, where
$g_2$ is a polynomial of $z_2$ only. Further, since
$$
g(z_1, z_2)-\underline{g(z_1, z_2)}=Ac_1+g_2(z_2)-g_2(z_2-c_2)
$$
is constant, consequently, if $c_2\neq0$, then $g_2$ is linear, so is $g(z_1, z_2)$. Hence, \eqref{coro39-10} holds.

($\Leftarrow$) It is clear from Lemma \ref{equiv} that \eqref{E1.1-new} has the same solutions as the equation
\begin{eqnarray}\label{coro39-11}
\left(\begin{array}{c}
    \overline{f}  \\ f_{z_1}
\end{array}\right)
&=& \frac{1}{2iD}\left(
  \begin{array}{cc}
   {a_1} & -{a_2}  \\ -{b_1} & {b_2}
  \end{array}
\right)\left(
  \begin{array}{c}
   e^{ig} \\  e^{-ig}
  \end{array}
\right)
\end{eqnarray}
has. Since $g_{z_1}=\underline{g}_{z_1}=A$, we have $\alpha\beta-\gamma\delta=0$. Thus, \eqref{coro39-14} holds since  $e^{i(g-\underline{g})}=b_2/(ia_2A)$.
By \eqref{coro39-10} and \eqref{coro39-14}, we yield
$$
f_{z_1}=\frac{a_1e^{i\underline{g}}ig_{z_1}+a_2e^{-i\underline{g}}ig_{z_1}}{2i{D}}=
\frac{-b_1e^{i{g}}+b_2e^{-ig}}{2i{D}}.
$$
It follows from \eqref{coro39-10} and the above identity that \eqref{coro39-11} holds.

\end{proof}

The following four examples demonstrate the accuracy of Theorem \ref{T1}.  Without any doubt, the property that $g$ is constant in Theorem \ref{T1} occurs. Now, we show the rest of the properties in Theorem \ref{T1} could happen.

\begin{example}\label{ex3.8}
Let $g(z_1, z_2)$ be any entire function with $g_{z_1}\equiv 0$ and $c=(c_1, 0)\neq (0, 0)\in \C^2$.
Let
$$p_1=iz_1, \quad  p_2=i, \quad  p_3=-3z_1, \quad  p_4 =1\quad \mbox{and} \quad p=4iz_1.$$
Then
$f(z_1, z_2)=\frac{1}{2i}e^{i\underline{g}(z_1, z_2)}+\underline{z_1}e^{-i\underline{g}(z_1,z_2)}$ is an entire solution to the equation \eqref{E1.1} when $L(f)=f_{z_1}$ and $p_5=p_6=0$, the property (b) in Theorem \ref{T1} occurs.
\end{example}
\begin{proof}
Since $g-\ug\equiv 0$,
\begin{align*}
& \left(iz_1f_{ z_{1}}+i\overline{f}\right)^{2}+\left(-3z_1f_{z_{1}}+\overline{f}\right)^{2}
 = 8 z_1^2f_{ z_{1}}^2-8z_1f_{z_1}\overline{f}\\
& =  8 z_1^2e^{-2i\underline{g}}-8z_1e^{-i\underline{g}}(\frac{1}{2i}e^{ig}+z_1e^{-ig})\\
&=4iz_1=p(z_1, z_2).
\end{align*}
Hence $f$ is an entire solution to the equation \eqref{E1.1} when $j=1$ and $p_5=p_6=0$.
By a little calculations, we obtain
\begin{align*}
a_1 & =i(k+3)z_1,  & a_2& =i(k-3)z_1,\\
b_1 & = i(k-1), & b_2 &= i(k+1),\\
\alpha& = 16i(k-1)(z_1-c_1)^2, & \beta & = 64(k+1)z_1(z_1-c_1)^2,\\
\gamma_1 &= 0, & \delta_1 &= -64(k-3)z_1(z_1-c_1)^2,
\end{align*}
and $D=4iz_1.$ Further, when $k=1$,
\begin{align*}\label{1-14-3-0}
f(z)
& = \frac{\underline{a_1}e^{i\underline{g}}-\underline{pa_2}e^{-i\underline{g}}}{2i\underline{D}}
= \frac{i(k+3)\underline{z_1}e^{i\underline{g}}-4i\underline{z_1}i(k-3)\underline{z_1}e^{-i\underline{g}}}{2i4i\underline{z_1}}\\
&=\frac{1}{2i}e^{i\underline{g}(z_1, z_2)}+\underline{z_1}e^{-i\underline{g}(z_1,z_2)}.
\end{align*}
When $k=1$, $f$ has the form \eqref{1-14-3} and satisfies the equation \eqref{E1.1} with $L(f)=f_{z_1}$ and $p_5=p_6=0$, and the property (b) in Theorem \ref{T1} exists. Further, when $k=1$,
 we have $\alpha\equiv\gamma\equiv 0$, which means {\it Sub-case A.3} in the proof of Theorem \ref{T1} occurs.
 The solution $f$ could have finite or infinite order depending on how we choose $g$.
\end{proof}

\begin{remark}\label{rem3.9}
We can verify that $f_*(z_1, z_2)=\frac{1}{2}e^{i\underline{g}(z_1, z_2)}-i\underline{z_1}e^{-i\underline{g}(z_1,z_2)}$ is also a solution to the equation  \eqref{E1.1} in the setting of Example \eqref{ex3.8}. Clearly, $f$ in Example \eqref{ex3.8} and $f_*$ here are of two different representations, which can be observed by Lemma \ref{equiv} and Remark \ref{rem2.3}.
Indeed, $g_*=g+\pi/2$.
\end{remark}

\begin{example}\label{3rdEx}
Let $m$ be a positive integer,  $g(z_1, z_2)=z_1+z_2^m+1$ and $ c=(2\pi, 0) \in \C^2$.
Set
$$p_1(z_1,z_2)=p_3(z_1,z_2)=p_4(z_1,z_2)=1 \quad \mbox{and} \quad p_2(z_1, z_2)=-1.
$$
Assume $p(z_1,z_2)=p $ is any non-zero constant.
Then $g_{z_1}=1$,  $g -\underline{g}=2\pi$ and further,
$$f(z_1, z_2)=\frac{1}{4i}\left\{(1-i)e^{i\underline{g}(z_1, z_2)}-(1+i)pe^{-i\underline{g}(z_1,z_2)}\right\}$$ is an entire solution to the equation \eqref{E1.1} when $L(f)=f_{z_1}$ and $p_5=p_6=0$, and the property (c) in Theorem \ref{T1} exists.
\end{example}

\begin{proof} Since $e^{\pm i\underline{g}}=e^{\pm i g}$, $\overline{f}=f$. Thus,
\begin{align*}
& \left(f_{ z_{1}}-\overline{f}\right)^{2}+\left(f_{z_{1}}+\overline{f}\right)^{2}
 = 2(f_{ z_{1}}^2+\overline{f}^2)\\
& =-\frac{1}{8}\left\{(1-i)ie^{ig}+(1+i)ipe^{-ig}\right\}^2-\frac{1}{8}\left\{(1-i)e^{ig}-(1+i)pe^{-ig}\right\}^2\\
&=p=p(z_1, z_2).
\end{align*}
Now, we verify that the conditions in the property (c) of Theorem \ref{T1} hold. Clearly, $g_{z_1}=1$ is a non-zero polynomial and $g-\ug=2\pi$ is constant. Further,
\begin{align*}
a_1 & =k-i,  & a_2& =k+i,
& b_1 & = -(k+i), & b_2 &= -(k-i),
\end{align*}
and $D=2$. It is easy to verify that when $k=1$, $f$ has the form \eqref{1-14-3} and
$$\deg_{z_1}\underline{g}_{z_1}=\deg_{z_1}b_1 -\deg_{z_1}a_1 =\deg_{z_1}b_2 -\deg_{z_1}a_2=0.
$$
Further, the solution $f$ is of finite order $m$.
\end{proof}

\begin{example}\label{4-1ex}
Let $k=\pm 1, \ g(z_1, z_2)=z_2+z_1e^{-2iz_2},\ c=(0, \frac{\pi}2)\in \C^2$, and $p(z_1, z_2)=-2z_2$. Set
$$
p_1(z_1, z_2)=kz_2,\quad  p_2(z_1, z_2)=\frac{k}2, \quad p_3(z_1, z_2)=iz_2, \quad p_4(z_1, z_2)=-\frac{i}2.
$$
Then 
$f(z_1, z_2)=ke^{i\underline{g}(z_1, z_2)}$ is a transcendental entire solution to \eqref{E1.1} when $L(f)=f_{z_1}$
and the property (d) in Theorem \ref{T1} occurs when $a_2\equiv b_1\equiv 0$.
\end{example}

\begin{proof}
Since
$\displaystyle
\underline{g}=z_2-\frac{\pi}2+z_1e^{-2i(z_2-\pi/2)}=z_2-\frac{\pi}2-z_1e^{-2iz_2},
$
thus,
\begin{align*}
& \left(kz_2f_{ z_{1}}+\frac{k}2\overline{f}\right)^{2}+\left(iz_2f_{z_{1}}+\frac{-i}2\overline{f}\right)^{2}
 =2z_2f_{z_1}\overline{f}
 = 2z_2e^{i\underline{g}}i\underline{g}_{z_1}e^{ig}\\
& = 2iz_2e^{i(g+\underline{g})}\underline{g}_{z_1}=2iz_2e^{i(2z_2-\pi/2)}(-e^{-2iz_2})\\
&=-2z_2=p(z_1, z_2).
\end{align*}
This means that $f$ satisfies \eqref{E1.1} with $L(f)=f_{z_1}$ and $p_5=p_6=0$.
Now, we verify that the conditions in the property (d) of Theorem \ref{T1} hold. Clearly, $g$ and $g_{z_1}$ are transcendental entire functions. Also $g+\ug=2z_2-\frac{\pi}2$. Furthermore,
$$
a_1 =2z_2, \quad   a_2 =0, \quad
 b_1  = 0,\quad  b_2 =1\quad \mbox{and} \quad D=-kiz_2.
 $$
It is easy to verify that $f$ has the form \eqref{1-14-3} when $k=\pm 1$.
It follows that the property (d) in Theorem \ref{T1} hold since $a_2\equiv b_1\equiv0$.
\end{proof}

\begin{example}\label{4-2ex}
Let $k=\pm 1, \ g(z_1, z_2)=z_2+z_1e^{2iz_2},\  c=(0, -\frac{\pi}2) \in \C^2$, and $p(z_1, z_2)=-2iz_2$. Set
$$p_1=\frac{1}{2},\quad p_2(z_1, z_2)=-iz_2, \quad p_3=-\frac{ki}{2} \quad \mbox{and}\quad p_4(z_1, z_2)=kz_2.
$$
Then
$f(z_1, z_2)=e^{-i\underline{g}(z_1, z_2)}$ is a transcendental entire solution to \eqref{E1.1} when $L(f)=f_{z_1}$ and $p_5=p_6=0$,
and the property (d) in Theorem \ref{T1} occurs when $a_1\equiv b_2\equiv 0$.
\end{example}

\begin{proof}
Since
$\displaystyle
\underline{g}=z_2+\frac{\pi}2+z_1e^{2i(z_2+\pi/2)}=z_2+\frac{\pi}2-z_1e^{2iz_2},$ so,
\begin{align*}
& \left(\frac{1}2f_{ z_{1}}-iz_2\overline{f}\right)^{2}+\left(-\frac{ki}2f_{z_{1}}+kz_2\overline{f}\right)^{2}
 =-i2z_2f_{z_1}\overline{f}
 = -i2 z_2e^{-i\underline{g}}(-i)\underline{g}_{z_1}e^{-ig}\\
& = -2z_2e^{-i(g+\underline{g})}\underline{g}_{z_1}=-2z_2e^{-i(2z_2+\pi/2)}(-e^{2iz_2})\\
&=-2iz_2=p(z_1, z_2).
\end{align*}
Thus, $f$ is a solution to \eqref{E1.1} and $g+\ug=2z_2+\pi/2$ is a polynomial.  Also,
$$
a_1 =0,  \quad  a_2 =k, \quad
 b_1  = -2kiz_2, \quad  b_2 =0 \quad \mbox{and} \quad D=kz_2.
 $$
It is easy to verify that $f$ has the form \eqref{1-14-3} when $k=\pm 1$.
It follows that the property (d) in Theorem \ref{T1} hold since $a_1\equiv b_2\equiv0$.
\end{proof}

In next two theorems, we replace $L(f)$ in
Theorem \ref{T1} by $L^*(f)$. It is the special case of $L(f)$ in which all $q_j(j=1,2,\cdots,s)$ are non-zero complex numbers. We use the same notations as we do in Theorem \ref{T1}.  Further with $p_5=p_6=0$, \eqref{E1.1} becomes
\begin{eqnarray}\label{E1.1-2}
\left(p_{1}L^*(f)+p_{2}\overline{f}\right)^{2}+\left(p_{3}L^*(f)+p_{4}\overline{f}\right)^{2}=p.
\end{eqnarray}
Indeed, we obtain two necessary-and-sufficient theorems for $L^*(g)$ to be
transcendental, which strengthen the property (d) in Theorem \ref{T1}. In these two theorems,  the entire solution $f$ to \eqref{E1.1-2} is either
$\displaystyle f(z)=c_*e^{i\ug}$ or $\displaystyle f(z)=p_*e^{-i\ug}$, where $c_*$ is a non-zero complex number and $p_*$ is a non-zero polynomial.

\begin{theorem}\label{1stIFF}
Let $f(z)
= {(\underline{a_1}e^{i\underline{g}}-\underline{pa_2}e^{-i\underline{g}})}/{(2i\underline{D})}$ be an entire solution to \eqref{E1.1-2} as in \eqref{1-14-3}.
Then
$L^*(\ug)$ is transcendental, $p_2$ is a non-zero complex number, $p/p_1$ is a non-zero polynomial, and
$$
L^*(\ug)=-\frac{pp_2i}{p_1}e^{-i(g+\ug)}
$$
if and only if $a_2\equiv b_1\equiv 0$.
\end{theorem}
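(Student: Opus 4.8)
The statement is an equivalence, and I would prove its two implications by rather different means: the implication $a_2\equiv b_1\equiv 0\Rightarrow(\cdots)$ is a direct computation from the matrix form \eqref{E1.2} (obtained from Lemma \ref{equiv}), while the converse is best obtained by feeding the transcendence hypothesis into the classification already proved in Theorem \ref{T1} and then separating the two surviving branches with Borel's lemma (Lemma \ref{borel}). Throughout, $D\not\equiv 0$ is the standing assumption that makes the form \eqref{1-14-3} meaningful.

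For the implication starting from $a_2\equiv b_1\equiv 0$, the definitions give $a_1=2kp_1$, $b_2=2kp_2$ and $D=-2ikp_1p_2$, so that \eqref{E1.2} (in which $d_1=d_2=0$ because $p_5=p_6=0$) reduces to $L^*(f)=\tfrac{p}{2p_1}e^{-ig}$ and $\of=\tfrac{1}{2p_2}e^{ig}$. Since $\of$ and $e^{ig}$ are entire and zero-free, $1/p_2$ is entire, forcing $p_2$ to be a non-zero constant; likewise $L^*(f)$ entire forces $p/p_1$ to be a non-zero polynomial. Shifting $\of=\tfrac1{2p_2}e^{ig}$ back gives $f=\tfrac1{2p_2}e^{i\ug}$, and since $L^*$ has constant coefficients it commutes with the shift, so $L^*(f)=\tfrac{i}{2p_2}L^*(\ug)e^{i\ug}$. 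Equating the two expressions for $L^*(f)$ yields exactly $L^*(\ug)=-\tfrac{pp_2 i}{p_1}e^{-i(g+\ug)}$. To see that $L^*(\ug)$ is transcendental I would argue by contradiction: if it were a polynomial, this identity would make $e^{-i(g+\ug)}$ rational, hence $g+\ug$ constant; writing $h=L^*(g)$, so that $L^*(\ug)=\uh$, the relation $h+\uh=L^*(g+\ug)=0$ together with Lemma \ref{const} forces $h\equiv 0$, whence $L^*(\ug)\equiv 0$, contradicting $p,p_2\neq 0$ in the identity.

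For the converse, since $f$ has the form \eqref{1-14-3} its function $g$ satisfies one of the properties (i)--(iii) of Theorem \ref{T1}. Property (i) forces $L^*(\ug)=0$ and property (ii) forces $L^*(\ug)$ to be a polynomial, both excluded by the transcendence hypothesis; hence property (iii) holds, giving that $g+\ug$ is non-constant and that either $a_1\equiv b_2\equiv 0$ or $a_2\equiv b_1\equiv 0$ (mutually exclusive, since otherwise all of $a_1,a_2,b_1,b_2$ vanish and $D\equiv 0$). It then remains only to exclude $a_1\equiv b_2\equiv 0$. In that case the same computation as above (now with $a_2=2kp_1$, $b_1=2kp_2$, $D=2ikp_1p_2$) produces $f=\tfrac{\underline{p}}{2p_2}e^{-i\ug}$ and an identity $L^*(\ug)=\tfrac{L^*(\underline{p})}{i\underline{p}}+\tfrac{ip_2}{p_1\underline{p}}e^{i(g+\ug)}$ carrying the exponential $e^{+i(g+\ug)}$. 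Subtracting the hypothesised relation $L^*(\ug)=-\tfrac{pp_2 i}{p_1}e^{-i(g+\ug)}$ leaves a three-term identity $A_1e^{-i(g+\ug)}+A_2e^{i(g+\ug)}+A_3=0$ whose coefficients are rational and with $A_1=-\tfrac{pp_2 i}{p_1}\not\equiv 0$.

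The decisive step, and the one I expect to be the main obstacle, is to annihilate this last identity. Because $g+\ug$ is non-constant, the exponents $-i(g+\ug)$, $i(g+\ug)$, $0$ have pairwise non-constant differences, and by Lemma \ref{new1} the characteristics $T(r,e^{\pm i(g+\ug)})$ grow faster than $\log r$ and hence dominate the $O(\log r)$ characteristics of the rational coefficients; Lemma \ref{borel} then forces $A_1\equiv A_2\equiv A_3\equiv 0$, contradicting $A_1\not\equiv 0$. This excludes $a_1\equiv b_2\equiv 0$ and leaves $a_2\equiv b_1\equiv 0$, as desired. The care required here lies in checking that the two branches of property (iii) are genuinely separated by the sign of the exponent $e^{\mp i(g+\ug)}$, and that the non-constancy of $g+\ug$ supplied by property (iii) simultaneously provides both the distinctness of the exponents and the growth domination needed to apply Borel's lemma.
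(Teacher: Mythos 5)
Your proof is correct, and while the half starting from $a_2\equiv b_1\equiv 0$ coincides with the paper's (the same computation from the matrix form of Lemma \ref{equiv}, the same deduction that $p_2$ is constant and $p/p_1$ a polynomial, and the same use of Lemma \ref{const} to rule out $L^*(\ug)$ being a polynomial), your converse takes a genuinely different route. The paper works directly with the identity \eqref{E1.7}: it substitutes $L^*(\ug)=p_*e^{-i(g+\ug)}$ into the coefficients $\gamma,\delta$ to produce the five-exponential relation \eqref{E1.7-2}, and must then verify that $g$, $g\pm\ug$ and $g+3\ug$ are all non-constant before Lemma \ref{borel} annihilates every coefficient (whence $b_1\equiv a_2\equiv 0$); the non-constancy of $g-\ug$ costs a separate growth comparison between $T(r,L^*(g))$ and $T(r,p_*e^{-iq_*})$. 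You instead invoke the classification of Theorem \ref{T1}: transcendence of $L^*(\ug)$ rules out properties (i) and (ii), so property (iii) holds, handing you both the non-constancy of $g+\ug$ and the dichotomy $a_1\equiv b_2\equiv 0$ or $a_2\equiv b_1\equiv 0$ (mutually exclusive since $D\not\equiv 0$); you then kill the first branch by pitting the identity \eqref{2} of Theorem \ref{2stIFF} (carrying $e^{+i(g+\ug)}$) against the hypothesised relation (carrying $e^{-i(g+\ug)}$), so that Borel's lemma is applied to a three-term relation in the single non-constant exponent $g+\ug$ with rational coefficients and $A_1=-pp_2i/p_1\not\equiv 0$. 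This buys a much lighter verification of the hypotheses of Lemma \ref{borel} — only the non-constancy of $g+\ug$ is needed, and property (iii) supplies it — at the price of leaning on the full case analysis behind Theorem \ref{T1}; the paper's argument is self-contained at the level of \eqref{E1.7} but pays for that with the extra non-constancy checks. Both arguments are sound.
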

\begin{proof}
$(\Leftarrow$)
$b_1\equiv 0$ if and only if $\alpha \equiv0$. The condition $a_2\equiv 0$ implies that $\delta \equiv 0$. Further we have $\beta\not\equiv 0$
since $b_2\not\equiv 0$. Thus, $D=-2kip_1p_2$. Applying Lemma \ref{equiv} with
$s(f)=L^*(f)$, $t(f)=\overline{f}$ and $u(f)\equiv 0$ gives
\begin{equation*}
\left(\begin{array}{c}
    L^*(f) \\ [5pt] \overline{f}
\end{array}\right)
= \left(
  \begin{array}{cc}
  0 &  \frac{p}{2p_1}  \\ [5pt] \frac{1}{2p_2} & 0
  \end{array}
\right)\left(
  \begin{array}{c}
   e^{ig} \\ [5pt] e^{-ig}
  \end{array}
\right).
\end{equation*}
Hence, we follow the proof of  \eqref{yeAdd3} in Theorem \ref{T1} to obtain that $p_2$ is a non-zero complex number, $p/p_1$ is a non-zero polynomial and
$$
L^*(\ug)=-\frac{pp_2i}{p_1}e^{-i(g+\ug)}.
$$
Thus, if $L^*(\ug)$ is a polynomial, then $g+\ug$ is constant, consequently, $L^*(g)+L^*(\ug)=0$. By Lemma \ref{const},
$L^*(\ug)\equiv 0$. This is a contradiction. Therefore, $L^*(\ug)$ is transcendental.

($\Rightarrow$)  Clearly, $L^*(\ug)\not\equiv 0$.
First, we assume that $g+\ug$ is constant, say, $c_*$. Thus,  $L^*(\ug)=p_*e^{-ic_*}$ is a polynomial, where
$p_*=-\frac{pp_2i}{p_1}$. Consequently, $L^*(g)$ and $L^*(\ug)$ are polynomials.
Since $g+\ug$ is constant, we have $L^*(g)+L^*({\ug})=0$, which implies $L^*(g)\equiv L^*({\ug})\equiv 0$, by Lemma \ref{const}.
Therefore, it contradicts $L^*(\ug)\not\equiv 0.$

Secondly, we assume that  $g+\ug:=q_*$ is a non-constant entire function and set $L^*(\ug)=p_*e^{-iq_*}$.
Thus, $L^*(g)$ is transcendental, which implies $g$ is transcendental. We claim that $g-\ug$ is a non-constant entire function. Indeed,  If $g-\ug$ is constant, say, $c_*$, then, along with the fact that $\ug +g=q_*$, derives that $g=q_*/2+c_*/2$ is a non-constant entire function and
$$T(r, g+\ug)= T(r, q_*)=T(r, g)+O(1).$$
If $q_*$ is a non-constant polynomial, then $g$ is a polynomial. So is $L^*(g)$. This contradicts $L^*(\ug)=p_*e^{-iq_*}$.
Thus, $q_*$ is transcendental, by Lemma 2.1 $T(r, e^{q*})/T(r, q*)\to \infty$ as $r\to \infty$.
Hence, by $L^*(\ug)=p_*e^{-iq_*}$,
$$
    \frac{T(r, L^*(\ug))}{T(r, q*)}= \frac{T(r, p_*e^{-iq_*})}{T(r, q_*)} \to \infty \quad \mbox{as} \ r \to \infty.
$$
This is a contradiction since, by Lemma 2.4,
$$T(r, L^*(g))=O(T(r,g))=O(T(r, q_*)).$$
Therefore, the claim is proved.
Let
\begin{align*}
\gamma(z)
&= D\left( -\underline{a_1}L^*(\underline{D})+\underline{D} L^*(\underline{a_1})+i\underline{a_1}\underline{D} L^*(\underline{g})\right):= \gamma_*+i\underline{a_1}D\underline{D}p_*e^{-iq_*},\\
\delta(z)
& = D\left(\underline{pa_2} L^*(\underline{D})-\underline{D} L^*(\underline{pa_2})+
   i\underline{D}\underline{pa_2} L^*(\underline{g})\right):=\delta_*+iD\underline{D}\underline{pa_2}p_*e^{-iq_*}.
\end{align*}

It follow from \eqref{E1.7}
that
\begin{equation}\label{E1.7-2}
\alpha e^{ig} +(\beta- i\underline{a_1}D\underline{D}p_*) e^{-ig}=\gamma_*e^{i\underline{g}}
+iD\underline{D}\underline{pa_2}p_*e^{-i(g+2\ug)}+
\delta_* e^{-i\underline{g}}.
\end{equation}
All coefficients in \eqref{E1.7-2} are polynomials.
To apply Lemma \ref{borel} to  \eqref{E1.7-2}, we need that $g+3\underline{g}$ is a non-constant function. Indeed, if $g+3\underline{g}$ is a complex number $c_*$,
then
$$ L^*(\underline{g})=-\frac{pp_2i}{p_1}e^{-i(g+\underline{g})}=-i\frac{pp_2}{p_1}e^{-ic_*}e^{-4i\underline{g}}.$$
Thus, $T(r,e^{-4i\underline{g}})=O(T(r,\underline{g}))+O(\log r)$. We get a contradiction by Lemma 2.1 and the fact that $g$ is transcendental. Hence, all coefficients in \eqref{E1.7-2}  are zero by Lemma \ref{borel} since $g$, $g\pm \ug$ and
$g+3\ug$ are non-constant entire functions.
Thus, we get $b_1\equiv a_2\equiv 0$.
\end{proof}


\begin{theorem}\label{2stIFF}
Let $f(z)
= {(\underline{a_1}e^{i\underline{g}}-\underline{pa_2}e^{-i\underline{g}})}/{(2i\underline{D})}$  be an entire solution to \eqref{E1.1-2} as in \eqref{1-14-3}. If $ L^*(\ug)\not \equiv 0$, then $ L^*(\ug)$ is transcendental, $p_1$ is a non-zero complex number, $p/p_2$ is a non-zero polynomial, and
\begin{equation}\label{2}
 L^*(\ug)=\frac{i\underline{p_2}}{\underline{p} p_1}e^{i(g+\ug)}-\frac{i  L^*(\underline{p})}{\underline{p}} +\frac{i L^*(\underline{p_2})}{\underline{p_2}}
\end{equation}
if and only if $a_1\equiv b_2\equiv 0$.\end{theorem}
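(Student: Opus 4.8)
The plan is to mirror the proof of Theorem~\ref{1stIFF}, interchanging the roles of the pairs $(a_2,b_1)$ and $(a_1,b_2)$: the hypothesis $a_1\equiv b_2\equiv 0$ corresponds to the factorization that leaves $e^{-ig}$ and $e^{i(g+\ug)}$ (rather than $e^{ig}$ and $e^{-i(g+\ug)}$) as the surviving exponentials. Throughout I work from the master identity \eqref{E1.7}, whose coefficients are $\alpha=-\underline{D}^{2}b_1$, $\beta=\underline{D}^{2}pb_2$, and $\gamma,\delta$ as defined in the proof of Theorem~\ref{T1} with $d_1\equiv 0$ (here $p_5=p_6=0$), and I repeatedly invoke Lemmas~\ref{new1}, \ref{L5}, \ref{const} and \ref{borel}.

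For the direction $a_1\equiv b_2\equiv 0\Rightarrow$ the representation, I first note that $b_2\equiv 0$ forces $\beta\equiv 0$ and $a_1\equiv 0$ forces $\gamma\equiv 0$, so we are in the branch of Sub-case~1.3 of Theorem~\ref{T1} with $a_1\equiv 0$; there $a_2=2kp_1$, $b_1=2kp_2$, $D=2ikp_1p_2$, and \eqref{E1.2} collapses to the diagonal system $L^*(f)=\tfrac1{2p_1}e^{ig}$, $\of=\tfrac{p}{2p_2}e^{-ig}$. Since $L^*(f)$ and $\of$ are entire while $e^{\pm ig}$ never vanish, $\tfrac1{2p_1}$ and $\tfrac{p}{2p_2}$ must be entire, so $p_1$ is a non-zero constant and $p/p_2$ a non-zero polynomial. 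Writing $p_*=p/(2p_2)$ gives $f=\underline{p_*}e^{-i\ug}$; computing $L^*(f)=(L^*(\underline{p_*})-i\underline{p_*}L^*(\ug))e^{-i\ug}$, equating with $\tfrac1{2p_1}e^{ig}$, dividing by $i\underline{p_*}$, and using the quotient rule $L^*(\underline{p_*})/\underline{p_*}=L^*(\underline{p})/\underline{p}-L^*(\underline{p_2})/\underline{p_2}$ yields precisely formula \eqref{2}. Finally, if $L^*(\ug)$ were a polynomial, then \eqref{2} would make $e^{i(g+\ug)}$ a non-vanishing rational function, hence a constant, so $g+\ug$ would be constant and $L^*(g)+L^*(\ug)=0$; Lemma~\ref{const} then forces $L^*(\ug)\equiv 0$, contradicting the standing hypothesis. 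Thus $L^*(\ug)$ is transcendental.

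For the converse I assume \eqref{2} with $L^*(\ug)$ transcendental and write $L^*(\ug)=Pe^{i(g+\ug)}+R$, where $P=i\underline{p_2}/(\underline{p}\,p_1)\not\equiv 0$ and $R$ is rational. Substituting into $\gamma$ and $\delta$ splits off the exponential factor, $\gamma=\gamma_*+iD\underline{D}\,\underline{a_1}P\,e^{i(g+\ug)}$ and $\delta=\delta_*+iD\underline{D}\,\underline{pa_2}P\,e^{i(g+\ug)}$ with $\gamma_*,\delta_*$ rational; using $e^{i(g+\ug)}e^{\pm i\ug}=e^{ig},e^{i(g+2\ug)}$, identity \eqref{E1.7} rearranges to
\[
(\alpha-iD\underline{D}\,\underline{pa_2}P)e^{ig}+\beta e^{-ig}-\gamma_* e^{i\ug}-\delta_* e^{-i\ug}-iD\underline{D}\,\underline{a_1}P\,e^{i(g+2\ug)}=0,
\]
an exponential identity with rational coefficients. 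To invoke Lemma~\ref{borel} I must verify that every pairwise difference of the five exponents is non-constant, i.e.\ that $g$, $g-\ug$, $g+\ug$ and $g+3\ug$ are non-constant. Here $g$ is transcendental because $L^*(g)$ is; $g+\ug$ is non-constant, else $e^{i(g+\ug)}$ would be constant and \eqref{2} would make $L^*(\ug)$ rational; and $g+3\ug$ is non-constant, since $g+3\ug=c_*$ would give $e^{i(g+\ug)}=c_* e^{-2i\ug}$ and hence $T(r,e^{-2i\ug})=O(T(r,\ug))+O(\log r)$ by \eqref{2} and Lemma~\ref{L5}, contradicting Lemma~\ref{new1}. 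Once Lemma~\ref{borel} forces all coefficients to vanish, the $e^{-ig}$-coefficient $\beta=\underline{D}^{2}pb_2=0$ gives $b_2\equiv 0$, and the $e^{i(g+2\ug)}$-coefficient $-iD\underline{D}\,\underline{a_1}P=0$ gives $a_1\equiv 0$ (using $D,p,P\not\equiv 0$), as required.

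The main obstacle, exactly as in Theorem~\ref{1stIFF}, is showing that $g-\ug$ is non-constant, which cannot be read off algebraically. I would argue by contradiction: if $g-\ug=c_*$ then, with $g+\ug=q_*$, one has $g=(q_*+c_*)/2$, so $g$ and $q_*$ share growth. If $q_*$ is a polynomial, then $g$, hence $L^*(g)$ and $L^*(\ug)$, are polynomials, contradicting transcendentality; if $q_*$ is transcendental, then $L^*(\ug)=Pe^{iq_*}+R$ forces $T(r,L^*(\ug))$ to dominate $T(r,q_*)$ via Lemma~\ref{new1}, contradicting $T(r,L^*(\ug))=O(T(r,\ug))=O(T(r,q_*))$ obtained from Lemma~\ref{L5}. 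This growth comparison, together with the bookkeeping of exponent differences needed to license Borel's lemma, are the only genuinely delicate points; everything else is the bilinear algebra already prepared in Theorem~\ref{T1}.
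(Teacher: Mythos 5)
Your proposal is correct and follows essentially the same route as the paper's proof: the $(\Leftarrow)$ direction via the diagonal system $L^*(f)=\tfrac1{2p_1}e^{ig}$, $\of=\tfrac{p}{2p_2}e^{-ig}$ obtained from Lemma~\ref{equiv}, and the $(\Rightarrow)$ direction by splitting the exponential part of $L^*(\ug)$ out of $\gamma$ and $\delta$ in \eqref{E1.7} and applying Lemma~\ref{borel} after checking that $g$, $g\pm\ug$ and $g+3\ug$ are non-constant. You in fact supply more detail than the printed proof, which defers these verifications to Theorem~\ref{1stIFF} and equation \eqref{wang-14-n}; your filled-in steps match what those references contain.
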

\begin{proof}
$(\Leftarrow)$ The condition $a_1\equiv b_2\equiv 0$ implies $a_2=2kp_1, b_1=2kp_2$ and $D=2kip_1p_2$. We obtain from Lemma \ref{equiv} that
\begin{equation*}
\left(\begin{array}{c}
   L^*(f) \\ [5pt] \overline{f}
\end{array}\right)
= \left(
  \begin{array}{cc}
  \frac1{2p_1}  & 0  \\ [5pt] 0& \frac{p}{2p_2}
  \end{array}
\right)\left(
  \begin{array}{c}
   e^{ig} \\ [5pt] e^{-ig}
  \end{array}
\right).
\end{equation*}
Hence, $p_1$ is constant, $p/p_2$ is a polynomial,
$$f=\frac{\underline{p}}{2\underline{p_2}}e^{-i\underline{g}} \quad \mbox{and} \quad L^*(f)=\frac{1}{2p_1}e^{ig}.$$
Taking the operator $ L^*$ on $f$ and comparing with $ L^*(f)$ above, we obtain \eqref{2} as we do in \eqref{wang-14-n} in
Theorem \ref{T1}. The rest of the proof is similar to the proof of Theorem \ref{1stIFF}.
\vskip.1in
$(\Rightarrow)$ If $ L^*(\underline{g})$ has the form \eqref{2}, then we write
$$\gamma=\gamma^*-\frac{\underline{a_1}D\underline{D}\underline{p_2}}{\underline{p}p_1}e^{i(g+\underline{g})},\quad \delta=\delta^*-\frac{\underline{a_2}D\underline{D}\underline{p_2}}{p_1}e^{i(g+\underline{g})},$$
where $\gamma$ and $\delta$ are defined in the proof of Theorem \ref{T1}; and $\gamma^*$ and $\delta^*$ are rational functions. It follows from \eqref{E1.7} that
\begin{equation}\label{3}
(\alpha +\frac{\underline{a_2}D\underline{D}\underline{p_2}}{p_1})e^{ig}+\beta e^{-ig}=\gamma^* e^{i\underline{g}}-\frac{\underline{a_1} D\underline{D}\underline{p_2}}{\underline{p}p_1}e^{i(g+2\underline{g})}+\delta_* e^{-i\underline{g}}.
\end{equation}
Similar to the proof of Theorem \ref{1stIFF}, we know that $g$, $g\pm \underline{g}$ and $g+3\ug$ are non-constant.
Hence, \eqref{3} satisfies all hypothesis of Lemma \ref{borel} and all coefficients in \eqref{3} are zero. This means $a_1\equiv b_2\equiv 0$.
\end{proof}

\section{Case II.\, $D\equiv 0$ and $d_2\not\equiv 0$}
\begin{theorem} \label{T17}
Let $p,p_1,p_2,\cdots,p_6$ be defined as in Theorem \ref{T1}, and further $D\equiv 0$ and $d_2\not\equiv 0$. Then every entire solution to \eqref{E1.1}
on $\mathbb{C}^{n}$ has the form:
\begin{equation}\label{1-14-37}
f(z)
=\frac{pa_2e^{-ig}-a_1e^{i g}}{2id_2},
\end{equation}
where $g$ is an entire function having one of the following properties:
\begin{enumerate}
\item[(i)] $g$ is constant.
\item[(ii)] $L(g)$ is a polynomial, and either $\overline{g}+g$ or $\overline{g}-g$ is constant if $d_1\not\equiv 0$, while $a_1a_2\not\equiv 0$ if $d_1\equiv 0$.
\end{enumerate}
In addition, if $L(f)=f_{z_j}$ and $d_1\equiv D\equiv 0$ when $j\in\{1, \cdots, n\}$, then we have \eqref{1-14-37},
where $g$ is an entire function having one of the following properties:
\begin{enumerate}
\item[(a)] $g$ is constant;
\item[(b)] $g_{z_j}\equiv 0$ when either $\deg_{z_j}\tilde{b}_1<\deg_{z_j}a_1$ or $\deg_{z_j}\tilde{b}_2<\deg_{z_j}a_2$, where  $\tilde{b}_1\stackrel{def}{=}kp_5-ip_6$ and $\tilde{b}_2=kp_5+ip_6$;
\item[(c)] $g_{z_j}$ is a non-zero polynomial when
$$\deg_{z_j}g_{z_j}=\deg_{z_j}\tilde{b}_1- \deg_{z_j}a_1=\deg_{z_j}\tilde{b}_2- \deg_{z_j}a_2\ge 0,
$$
where $\tilde{b}_1$ and $\tilde{b}_2$ are defined in (b).
\end{enumerate}
\end{theorem}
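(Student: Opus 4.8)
The plan is to mirror the proof of Theorem \ref{T1}, rearranging the operators in Lemma \ref{equiv} so as to accommodate the hypothesis $D\equiv 0$. Since $D\equiv 0$, the assignment $s(f)=L(f),\ t(f)=\overline{f},\ u(f)=f$ used for Theorem \ref{T1} would give $\xi_0=D\equiv 0$ and is inadmissible. Instead, because $d_2\not\equiv 0$ the matrix $\left(\begin{smallmatrix} p_1 & p_5\\ p_3 & p_6\end{smallmatrix}\right)$ is invertible, so I would apply Lemma \ref{equiv} with $s(f)=L(f)$, $t(f)=f$ and $u(f)=\overline{f}$; then $\eta_1=p_1,\eta_2=p_5,\eta_3=p_3,\eta_4=p_6,\eta_5=p_2,\eta_6=p_4$ and $\xi_0=p_1p_6-p_5p_3=-d_2\not\equiv 0$. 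A direct substitution gives $\tau_1=a_1,\ \tau_2=a_2,\ \zeta_1=\tilde{b}_1,\ \zeta_2=\tilde{b}_2,\ \xi_1=-d_1,\ \xi_2=-D\equiv 0$. Reading the second row of \eqref{lem2eq} then yields exactly \eqref{1-14-37}, and the first row yields $L(f)=\frac{\tilde{b}_1e^{ig}-p\tilde{b}_2e^{-ig}}{2id_2}+\frac{d_1}{d_2}\overline{f}$.

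Next I would compute a second expression for $L(f)$ by applying $L$ directly to \eqref{1-14-37}, using $L(e^{\pm ig})=\pm iL(g)e^{\pm ig}$ and the product rule, and I would expand $\overline{f}$ from \eqref{1-14-37} under the forward shift. Equating the two expressions for $L(f)$ and clearing the factor $2id_2$ produces the master identity
$$\alpha e^{ig}+\beta e^{-ig}=\gamma e^{i\overline{g}}+\delta e^{-i\overline{g}},$$
where $\alpha$ and $\beta$ are degree-one polynomials in $u:=L(g)$ with rational coefficients (the $u$-terms being $ia_1L(g)$ and $ipa_2L(g)$), while $\gamma=-d_1\overline{a_1}/\overline{d_2}$ and $\delta=d_1\overline{pa_2}/\overline{d_2}$ are independent of $g$. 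This is the exact analogue of \eqref{E1.7}, with the forward shift $\overline{g}$ now in the role of $\underline{g}$. Writing $G=\overline{g}$ (so that $\underline{G}=g$) recasts the identity as an instance of \eqref{new3-01} in the variable $G$, with $u=L(\underline{G})=L(g)$; the growth hypothesis $T(r,u)=O(T(r,\underline{G}))=O(T(r,g))$ required by Lemmas \ref{new3} and \ref{new2} follows from Lemma \ref{L5}.

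With the master identity I would run the three-way analysis of Theorem \ref{T1}. When all of $\alpha,\beta,\gamma,\delta$ are nonzero (which forces $d_1\not\equiv 0$ and, since $\gamma\delta\not\equiv 0$, $a_1a_2\not\equiv 0$), Lemma \ref{new2} applies with $m=2$ and leading coefficient proportional to $pa_1a_2\not\equiv 0$, so $L(g)$ is a polynomial and one of $\overline{g}+g$, $\overline{g}-g$ is constant, i.e.\ property (ii). When exactly one coefficient vanishes, Lemma \ref{new3} gives that $G=\overline{g}$, hence $g$, is constant, i.e.\ property (i). When exactly two vanish with $d_1\not\equiv 0$, the vanishing of one of $\gamma,\delta$ pairs with one of $\alpha,\beta$, and direct manipulation (as in Sub-cases 1.1--1.5 of Theorem \ref{T1}) gives $e^{\pm i(g+\overline{g})}$ or $e^{\pm i(g-\overline{g})}$ equal to a rational function and $L(g)$ equal to a rational, hence polynomial, function; here I would use $\tilde{b}_1=kd_2/p_3$ when $a_1\equiv 0$ and $\tilde{b}_2=kd_2/p_3$ when $a_2\equiv 0$ to rule out the incompatible pairings. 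The genuinely new regime is $d_1\equiv 0$, where $\gamma\equiv\delta\equiv 0$ and the identity collapses to $\alpha e^{ig}+\beta e^{-ig}=0$, so the two lemmas do not apply directly. If $\alpha\not\equiv 0$ then $e^{2ig}=-\beta/\alpha$ has $T(r,e^{2ig})=O(T(r,L(g)))=O(T(r,g))$, contradicting Lemma \ref{new1} unless $g$ is constant; the only alternative is $\alpha\equiv\beta\equiv 0$, which with $a_1a_2\not\equiv 0$ forces $L(g)$ to equal a fixed rational, hence polynomial, function, giving property (ii). If instead $a_1\equiv 0$, then $p_1=ikp_3$ and $d_2=p_3(p_5-ikp_6)$, whence $\tilde{b}_1=kd_2/p_3\not\equiv 0$, so $\alpha=\tilde{b}_1\not\equiv 0$, the pair $\alpha\equiv\beta\equiv 0$ is impossible, and $g$ must be constant (similarly for $a_2\equiv 0$).

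For the final assertion, where $L(f)=f_{z_j}$ and $d_1\equiv D\equiv 0$, I would refine the collapsed case. In the surviving subcase $\alpha\equiv\beta\equiv 0$ with $a_1a_2\not\equiv 0$, solving $\alpha\equiv 0$ for $g_{z_j}$ expresses it as $i\tilde{b}_1/a_1$ plus the logarithmic-derivative terms $i(a_1)_{z_j}/a_1-i(d_2)_{z_j}/d_2$, which tend to $0$ as $z_j\to\infty$. Hence if $\deg_{z_j}\tilde{b}_1<\deg_{z_j}a_1$ the whole expression tends to $0$, and since $g_{z_j}$ is already a polynomial, Liouville's theorem forces $g_{z_j}\equiv 0$ (property (b)); otherwise $g_{z_j}$ is a polynomial with $\deg_{z_j}g_{z_j}=\deg_{z_j}\tilde{b}_1-\deg_{z_j}a_1\ge 0$. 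The parallel computation from $\beta\equiv 0$ gives the same conclusions with $\tilde{b}_2,a_2$, and since both relations hold at once the two degrees must agree, yielding property (c); the remaining possibility is $g$ constant, which is property (a). I expect the main obstacle to be twofold: the bookkeeping that pins down the $g$-dependent coefficients $\alpha,\beta$ of the master identity (keeping the $L(d_2)/d_2$ terms and the forward shift $\overline{g}$ straight), and the degenerate regime $d_1\equiv 0$, where the clean dichotomy of Lemmas \ref{new3}--\ref{new2} must be replaced by the direct growth argument and, for the last assertion, by the exact-degree Liouville estimates above.
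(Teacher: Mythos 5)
Your proposal is correct and follows essentially the same route as the paper: the same assignment $(s,t,u)=(L(f),f,\overline{f})$ in Lemma \ref{equiv}, the same four-exponential identity obtained by computing $L(f)$ in two ways, the same case analysis via Lemmas \ref{new3} and \ref{new2}, the same direct growth argument in the degenerate regime $d_1\equiv 0$, and the same Liouville/degree comparison for the $L(f)=f_{z_j}$ case. The only slip is in your enumeration of the ``exactly two coefficients vanish'' case, where you overlook the pairing in which both $L(g)$-dependent coefficients vanish simultaneously (the paper's Sub-case 1.5); that sub-case reduces at once to $e^{2i\overline{g}}$ being rational and hence to property (i), so nothing is lost.
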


\begin{proof}
Let $f$ be an entire solution to \eqref{E1.1}. Since $D\equiv 0$ and $d_2\not\equiv 0$, we rewrite \eqref{E1.1} as
$$
(p_1L(f)+p_5f+p_2\of)^2+(p_3L(f)+p_6f+p_4\of)^2=p.
$$
Applying Lemma \ref{equiv} with
$$s(f)=L(f), \quad t(f)=f\quad \mbox{and} \quad u(f)= \overline{f},$$
we obtain
\begin{align}\label{E1.27}
\left(\begin{array}{c}
    L(f)  \\ [5pt] f
\end{array}\right)
= \frac{1}{2id_2}\left(
  \begin{array}{cc}
    {\tilde{b}_1} & -p{\tilde{b}_2} \\ [5pt] -a_1 & pa_2
  \end{array}
\right)\left(
  \begin{array}{c}
   e^{ig} \\ [5pt]  e^{-ig}
  \end{array}
\right)+
 \frac{1}{d_2} \left(
  \begin{array}{c}
  d_1 \\ [5pt] 0
  \end{array}
\right) \overline{f},
\end{align}
where ${\tilde{b}_1}=kp_5-ip_6$ and $ {\tilde{b}_2}=kp_5+ip_6$.
 It follows that
$$
f=
\frac{1}{2id_2}\left(
  -a_1 \  pa_2
    \right)
    \left(
  \begin{array}{c}
   e^{ig} \\  e^{-ig}
  \end{array}
\right),
$$
which is just the representation \eqref{1-14-37}. Similarly as in the proof of Theorem \ref{T1}, we obtain
 \begin{align}\label{9.17}
L(f)
=
\frac{1}{2i(d_2)^2}\left(
  \begin{array}{c}
    L(d_2)a_1-d_2L(a_1)-
    d_2iL(g)a_1\\ [5pt]
    -L(d_2)pa_2+d_2L(pa_2)-
    d_2iL(g)pa_2
  \end{array}
\hskip-.05in\right)^T
\hskip-.05in
\left(
  \begin{array}{c}
   e^{ig} \\ [5pt] e^{-ig}
  \end{array}
\right).
\nonumber
 \end{align}
On the other hand, \eqref{E1.27} shows
\begin{equation}\label{yenew277}
L(f)=\frac{1}{2id_2}\left(
   \tilde{b}_1\quad  -p\tilde{b}_2
  \right)\left(
  \begin{array}{c}
   e^{ig} \\  e^{-ig}
  \end{array}
\right)+\frac{d_1}{d_2}\overline{f}.
\end{equation}
Taking the representations of $f$ and $L(f)$ into \eqref{yenew277} yields
\begin{align*}\label{yenew3}
&  d_2\overline{d_2} \left(
   \tilde{b}_1 \quad  -p\tilde{b}_2
  \right)\left(
  \begin{array}{c}
   e^{ig} \\  e^{-ig}
  \end{array}
\right)+d_1d_2 \left(
  -\overline{a_1} \ \,  \overline{pa_2}
    \right)
    \left(
  \begin{array}{c}
   e^{i\overline{g}} \\  e^{-i\overline{g}}
  \end{array}
\right) \\ \nonumber
 & \qquad = \overline{d_2}\left(
  \begin{array}{c}
     L(d_2)a_1-d_2L(a_1)-
    d_2iL(g)a_1\\ [5pt]
    -L(d_2)pa_2+d_2L(pa_2)-
    d_2iL(g)pa_2
  \end{array}
\hskip-.05in\right)^T
\hskip-.05in
\left(
  \begin{array}{c}
   e^{ig} \\ [5pt] e^{-ig}
  \end{array}
\right),
\end{align*}
which can be written as
\begin{equation}\label{E1.77}
\alpha e^{i\overline{g}}+\beta e^{-i\overline{g}}=\gamma e^{ig}+\delta e^{-ig},
\end{equation}
where
\begin{eqnarray*}
\alpha(z)& = & -d_1d_2\overline{a_1}, \qquad  \qquad \beta(z) =d_1d_2\overline{pa_2}, \\ 
\gamma(z)
&=& \overline{d_2}\left[L(d_2)a_1-d_2(L(a_1)+
    iL(g)a_1+\tilde{b}_1)\right],\\
\delta(z)
& =& \overline{d_2}\left[\,-L(d_2)pa_2+d_2(L(pa_2)-
    iL(g)pa_2+p\tilde{b}_2)\right].
\end{eqnarray*}
Here, $\alpha$ and $\beta$ cannot be identical to zero at the same time since $d_2\not\equiv 0$.\vskip 1mm
\par Now we continue the argument under the assumption $d_1\not\equiv 0$. Similarly as in the proof of Theorem \ref{T1}, we consider the following three cases.\vskip 1mm
\par
{\it Case 1:}\,exactly two of $\alpha,\beta$, $\gamma$, and $\delta$ are identical to zero.\vskip 1mm
\par
{\it Sub-case 1.1}:\,$\alpha\equiv \gamma\equiv 0$  and $\beta\delta\not\equiv 0$. Then $a_1\equiv \tilde{b}_1\equiv 0$, a contradiction due to $d_2\not\equiv 0$.\vskip 1mm
\par
{\it Sub-case 1.2}:\,$\alpha\equiv \delta\equiv 0$ and $\beta\gamma\not\equiv 0$. Thus, we have $a_1\equiv 0, a_2=2kp_1\not\equiv 0$. Then $\delta\equiv 0$ implies
\begin{equation*}
L(g)=i\frac{L(d_2)}{d_2}-i\frac{L(pa_2)}{pa_2}-i\frac{\tilde{b}_2}{a_2},
\end{equation*}
which means $L(g)$ is a polynomial. So is $\gamma$. Further,
 \eqref{E1.77} shows
 $ e^{i(\overline{g}+g)}=\beta/\gamma$,
then $\overline{g}+g$ is constant. Thus, $g$ has the property (ii) in Theorem \ref{T17}.\vskip 1mm
\par {\it Sub-case 1.3}:\,$\beta\equiv \gamma \equiv 0$ and $\alpha\delta\not\equiv0$. Thus, $a_2 \equiv 0$ and $a_1=2kp_1\not\equiv 0$.
Clearly, $\gamma \equiv 0$ implies
\begin{equation*}
L(g)=i\frac{L(a_1)}{a_1}-i\frac{L(d_2)}{d_2}+i\frac{\tilde{b}_1}{a_1},
\end{equation*}
which means $L(g)$ is a polynomial. So is $\delta$. Further,
 \eqref{E1.77} shows that
 $ e^{-i(\overline{g}+g)}=\alpha/\delta$ is a polynomial,
then $\overline{g}+g$ is constant. We have the property (ii) in Theorem \ref{T17}.\vskip 1mm
\par
{\it Sub-case 1.4}:\, $\beta\equiv \delta \equiv 0$ and $\alpha\gamma\not\equiv0$. Then $ a_2\equiv \tilde{b}_2\equiv 0$, which contradicts with the assumption $d_2\not\equiv 0$.\vskip 1mm
\par
{\it Sub-case 1.5}:\,$\gamma\equiv \delta \equiv 0$ and $\alpha\beta\not\equiv 0.$ Then \eqref{E1.77} gives
$e^{-2i\overline{g}}=-\alpha/\beta$. It follows from $\alpha/\beta$ is a polynomial that $g$ is constant. Thus, $g$ has the property
(i) in Theorem \ref{T17}.\par

\par
{\it Case 2:}\, exactly one of $\alpha,\beta,$ $\gamma$ and $\delta$ equals identically to zero. This is a straightforward
application of Lemma \ref{new3} with $u=L(g)$. Thus, $g$ has the property (i) in Theorem \ref{T17}.
\vskip 2mm
\par
{\it Case 3:}\, $\alpha\beta\gamma\delta\not\equiv 0$. Similarly as in Sub-case 1.3 in the proof of Theorem \ref{T1}, we know that $L(g)$ is a polynomial and $\overline{g}+g$ or $\overline{g}-g$ is constant. Therefore, in this sub-case, $g$ has the property (ii) in Theorem \ref{T1}.\vskip 2mm
\par Next, we continue the argument under the assumption $d_1\equiv 0$. Hence, \eqref{E1.77} becomes
$\gamma_1 e^{2ig}=-\delta_1$, where
\begin{equation*}
\begin{split}
\gamma_1(z)&=L(d_2)a_1-d_2(L(a_1)+
    iL(g)a_1+\tilde{b}_1),\\
\delta_1(z)& =-L(d_2)pa_2+d_2(L(pa_2)-
    iL(g)pa_2+p\tilde{b}_2).
\end{split}
\end{equation*}
To finish the proof, we assume that $g$ is a non-constant entire function and claim that $\gamma_1\equiv 0$. Indeed, if
$\gamma_1\not\equiv 0$, then $e^{2ig}=-\delta_1/\gamma_1$.
Since the characteristic function of any rational function is $O(\log r)$,  applying Lemma \ref{L5} to the above equation, we yield that
$$T(r,e^{2ig})=O(T(r,L(g) ))+O(\log r)=O(T(r, g))+O(\log r),$$
for all $r$ possibly outside a set of finite Lebesgue measure. The equality leads to a contradiction by Lemma \ref{L1}. Thus the claim is proved. \par

Further, we claim that $a_1\not\equiv 0$. Indeed, if $a_1\equiv 0$, then by the previous claim $\gamma_1\equiv -d_2\tilde{b}_1\equiv 0$, which means $\tilde{b}_1\equiv 0$. Consequently, $d_2\equiv 0$. This is a contradiction.
It follows from $\gamma_1\equiv 0$ and $a_1\not\equiv 0$ that $L(g)$ is a polynomial and
$$
L(g)=\frac{i\tilde{b}_1}{{a_1}} +\frac{iL(a_1)}{{a_1}}-\frac{iL(d_2)}{{d_2}}.
$$
At the same time, we also have $\delta_1 \equiv 0$ by $\gamma_1\equiv 0$. Similarly, we know $a_2\not\equiv 0$, and further we have
$$
L(g)=-\frac{i\tilde{b}_2}{{a_2}} -\frac{iL(pa_2)}{pa_2}+\frac{iL(d_2)}{d_2}.
$$
Therefore, $L(g)$ has the property (ii) and the first part of the theorem is proved.\vskip 1mm
\par
When $L(f)=f_{z_j}$ and $d_1\equiv D\equiv 0$, if $g$ is non-constant, by the above proof of (ii), we get
$$
{g}_{z_j}= \frac{i\tilde{b}_1}{{a_1}} +\frac{i({a_1})_{z_j}}{{a_1}}-\frac{i(d_2)_{z_{j}}}{d_2}
\quad \mbox{and} \quad
{g}_{z_j}= -\frac{i\tilde{b}_2}{{a_2}} -\frac{i({pa_2})_{z_j}}{{pa_2}}+\frac{i(d_2)_{z_{j}}}{d_2}.
$$
Now, we regard $a_1, \tilde{b}_1, a_2$ and $\tilde{b}_2$ as polynomials of $z_j$. Then there are polynomials $S_m$ and $T_m$ ($m=1,2$) in $z_j$,  whose coefficients are polynomials in
$z_1, \cdots, z_{j-1}, z_{j+1}, \cdots, z_n$, such that
$$
i\tilde{b}_1=S_1a_1+T_1 \qquad \mbox{and} \qquad i\tilde{b}_2=S_2a_2+T_2
$$
where $\deg_{z_j}T_1<\deg_{z_j}a_1$ and $\deg_{z_j}T_2<\deg_{z_j}a_2$. Hence,
$$
{g}_{z_j}-S_1= \frac{T_1}{{a_1}} +\frac{i({a_1})_{z_j}}{{a_1}}-\frac{i(d_2)_{z_{j}}}{d_2}
$$
and
$$
{g}_{z_j}-S_2= -\frac{T_2}{{a_2}} -\frac{i({pa_2})_{z_j}}{{pa_2}}+\frac{i(d_2)_{z_{j}}}{d_2}.
$$
Let $z_1, \cdots, z_{j-1}, z_{j+1}, \cdots, z_n$ be fixed and let $z_j$ go to infinity. Thus, $g_{z_j}-S_m\equiv 0$ by Liouville's theorem.
In particular,
if
$$\deg_{z_j}\tilde{b}_1<\deg_{z_j}a_1\quad \text{or}\quad \deg_{z_j}\tilde{b}_2<\deg_{z_j}a_2,$$ then $g_{z_j}\equiv 0$.
Therefore, $g$ has the property (b) in our theorem.
If $\deg_{z_j}\tilde{b}_1\ge \deg_{z_j}a_1$ and $\deg_{z_j}\tilde{b}_2\ge \deg_{z_j}a_2$, then
$g_{z_j}$ is a polynomial with
$$
\deg_{z_j}g_{z_j}=\deg_{z_j}\tilde{b}_1-\deg_{z_j}a_1=\deg_{z_j}\tilde{b}_2-\deg_{z_j}a_2.
$$
Therefore, $g$ has the property $(c)$ and the theorem is proved completely.
\end{proof}

Our next corollary improves the results in \cite[Theorems 2.2 and 2.3]{xuXu-2022} due to our matrix method.

\begin{corollary}\label{Cor4.2}
Let $n=s=2$ and $q_1=q_2=p=1$. If $p_m (m=1,2,\cdots, 6)$ are constant and $p_2=p_4=0$ in Theorem \ref{T17}, then
\begin{eqnarray}\label{E1.131}
\left(p_{1}L(f)+p_5f\right)^{2}+\left(p_{3}L(f)+p_6f\right)^{2}=p\end{eqnarray}
has a non-constant entire solution $f$ if and only if $a_1a_2\not\equiv 0$, $a_1\tilde{b}_2+a_2\tilde{b}_1=0$, and
$$
f(z)
=\frac1{2i{d_2}}\left(a_2e^{-i{g}}-a_1e^{ig}\right),
$$
where $g(z_1, z_2)=\frac{ib_1}{a_1}z_1+g^*(z_2-z_1)$, $g^*$ is a polynomial of one variable, and $d_2, a_1, a_2, \tilde{b}_1,\tilde{b}_2$ are defined in Theorem \ref{T17}.
\end{corollary}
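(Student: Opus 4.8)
The plan is to read the corollary as the fully explicit specialization of Theorem \ref{T17} in the degenerate branch forced by the hypotheses, and then to finish with two short computations: solving a transport equation and a direct substitution check. First I would record what the hypotheses give. With $p_2=p_4=0$ and all $p_m$ constant one has $b_1=b_2=0$, $d_1=p_2p_6-p_4p_5=0$ and $D=p_1p_4-p_2p_3=0$, while $d_2=p_3p_5-p_1p_6$ is exactly the quantity assumed non-zero in Case II; moreover $p\equiv 1$. Hence we sit in the branch $d_1\equiv D\equiv 0$ of Theorem \ref{T17}, and \eqref{E1.131} is \eqref{E1.1} with $p\equiv 1$. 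The representation \eqref{1-14-37} then already yields the asserted closed form $f=\frac1{2id_2}(a_2e^{-ig}-a_1e^{ig})$, and since a constant $g$ forces a constant $f$, for the necessity direction I may assume $g$ non-constant and invoke property (ii).

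The decisive step is to pin down $L(g)=g_{z_1}+g_{z_2}$. I would apply $L$ to the second row $f=\frac1{2id_2}(a_2e^{-ig}-a_1e^{ig})$ of \eqref{E1.27} and compare with its first row $L(f)=\frac1{2id_2}(\tilde b_1e^{ig}-\tilde b_2e^{-ig})$. As every coefficient is constant, this comparison reduces to $(-iL(g)a_1-\tilde b_1)e^{ig}+(-iL(g)a_2+\tilde b_2)e^{-ig}\equiv 0$. Here is the core of the argument: if either bracket were not identically zero, then $e^{2ig}$ would be a rational function of $L(g)$, so $T(r,e^{2ig})=O(T(r,L(g)))+O(\log r)=O(T(r,g))+O(\log r)$ by Lemma \ref{L5}, contradicting Lemma \ref{new1} for non-constant $g$. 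Thus both brackets vanish, which forces $a_1a_2\neq 0$ (otherwise $d_2=(a_2\tilde b_1-a_1\tilde b_2)/(2i)$ would vanish), the compatibility relation $a_1\tilde b_2+a_2\tilde b_1=0$, and the constant value $L(g)=i\tilde b_1/a_1=-i\tilde b_2/a_2$. This recovers the two expressions for $L(g)$ produced in the $d_1\equiv 0$ part of the proof of Theorem \ref{T17}.

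With $L(g)$ constant I would solve the transport equation $g_{z_1}+g_{z_2}=i\tilde b_1/a_1$. Straightening $L$ into a single derivative via $(z_1,z_2)\mapsto(z_1,z_2-z_1)$ turns this into $\partial_{z_1}g=\text{const}$, whose entire solutions are $g=\frac{i\tilde b_1}{a_1}z_1+g^*(z_2-z_1)$ with $g^*$ holomorphic in one variable (this matches the printed form once the $b_1$ there is read as $\tilde b_1$). In these straightened coordinates we are in the $L(f)=f_{z_j}$ situation of Theorem \ref{T17}, so the polynomial character of $g^*$ is to be read off from property (c) there via Liouville's theorem; since $a_1,\tilde b_1$ are constants the governing degree relation degenerates and only linear behaviour in the transport direction survives. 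I expect the precise determination of the admissible class of $g^*$ to be the main obstacle, as this is the one place where Theorem \ref{T17} does not hand over the conclusion verbatim and where the growth/Liouville input is genuinely needed.

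For the converse I would verify by substitution, exploiting the factorization $A^2+B^2=(a_1L(f)+\tilde b_1f)(a_2L(f)+\tilde b_2f)$, valid for either $k=\pm1$, where $A=p_1L(f)+p_5f$ and $B=p_3L(f)+p_6f$. Feeding in the closed form of $f$ together with $L(g)=i\tilde b_1/a_1=-i\tilde b_2/a_2$, each factor collapses to a single exponential, namely $a_1L(f)+\tilde b_1f=\frac{\tilde b_1a_2}{id_2}e^{-ig}$ and $a_2L(f)+\tilde b_2f=\frac{-a_1\tilde b_2}{id_2}e^{ig}$, so their product equals $a_1a_2\tilde b_1\tilde b_2/d_2^2$. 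Finally the compatibility relation gives $d_2=ia_1\tilde b_2$, whence $d_2^2=a_1a_2\tilde b_1\tilde b_2$ and the product is $1=p$, closing the proof.
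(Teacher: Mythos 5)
Your argument tracks the paper's own proof almost step for step. The forward direction re-derives, rather than cites, the key identity $(-iL(g)a_1-\tilde{b}_1)e^{ig}+(-iL(g)a_2+\tilde{b}_2)e^{-ig}\equiv 0$ obtained by comparing the two rows of \eqref{E1.27}; your growth argument (Lemma \ref{L5} plus Lemma \ref{new1}) forcing both coefficients to vanish is exactly the $\gamma_1\equiv\delta_1\equiv 0$ step in the $d_1\equiv 0$ branch of the proof of Theorem \ref{T17}, and the deductions $a_1a_2\neq 0$, $a_1\tilde{b}_2+a_2\tilde{b}_1=0$, $L(g)=i\tilde{b}_1/a_1=-i\tilde{b}_2/a_2$ and the transport-equation solution coincide with the paper's. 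Your reading of the printed $b_1$ as $\tilde{b}_1$ is also the correct one. For the converse the paper verifies the equivalent matrix system \eqref{E1.27} directly, while you factor the left side of \eqref{E1.131} as $(a_1L(f)+\tilde{b}_1f)(a_2L(f)+\tilde{b}_2f)$ and collapse each factor to a single exponential; both computations are correct and of comparable length. Two small omissions on your side: the converse should also record that the exhibited $f$ is non-constant (the paper does this by noting $L(f)\equiv 0$ would force $a_1e^{ig}+a_2e^{-ig}\equiv 0$ and hence $g$ constant), and the forward direction should note that $L(g)\equiv 0$ is excluded because it would make $f$ constant via \eqref{E1.131}.

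The one step you explicitly leave open --- why $g^*$ must be a \emph{polynomial} --- is precisely the step the paper does not establish either: its proof derives only that $L(g)$ equals the constant $i\tilde{b}_1/a_1$, and then asserts the form of $g$ with $g^*$ a polynomial. But $L(g)=\text{const}$ constrains $g^*$ only to be an entire function of one variable; your own converse computation (and the paper's) uses nothing beyond $L(g)=i\tilde{b}_1/a_1=-i\tilde{b}_2/a_2$, so a transcendental choice such as $g^*(w)=e^{w}$ still produces a non-constant entire solution (e.g.\ with $p_3=p_5=0$ one gets $f=k\sin(g)/p_6$ solving $L(f)^2+(p_6f)^2=1$ for any entire $g$ with $L(g)=kp_6$). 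So there is no Liouville-type input available to upgrade ``entire'' to ``polynomial'' here, and your instinct that this is the soft spot is right --- but it is a soft spot of the corollary as stated (presumably inherited from a finite-order hypothesis in the results of \cite{xuXu-2022} it generalizes), not a gap you could have closed by a cleverer argument. Apart from flagging that the word ``polynomial'' should read ``entire function'' (or that a finite-order hypothesis on $f$ should be added), your proposal is correct and follows the paper's route.
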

\begin{proof}
All $a_1,a_2,\tilde{b}_1,\tilde{b}_2$ and $d_2(\not=0)$ are constants. At the same time, $p_2=p_4=0$ implies that $b_1=b_2=0$ and $D=d_1=0$. \vskip 1mm
\par
($\Rightarrow$)
By Theorem \ref{T17}, $f(z)=(a_2e^{-i{g}}-a_1e^{i{g}})/(2id_2)$ and $L(g)$ is a polynomial. If $L(g)\equiv 0$, then
$L(f)\equiv 0$. Therefore,   $f$ is constant since \eqref{E1.131}. This is a contradiction. Thus, $L(g)$ is a non-zero
polynomial. Consequently, $g$ is non-constant and we get from the proof of Theorem \ref{T17} under the assumption $d_1\equiv 0$ that $a_1a_2\not\equiv 0$,
$$
L(g)=g_{z_1}+g_{z_2}=\frac{i\tilde{b}_1}{a_1}=-\frac{i\tilde{b}_2}{a_2},
$$
which gives $a_1\tilde{b}_2+a_2\tilde{b}_1=0$ and
$$
g(z_1, z_2)=-\frac{i\tilde{b}_2}{a_2}z_1+g^*(z_2-z_1)=\frac{i\tilde{b}_1}{a_1}z_1+g^*(z_2-z_1),
$$
where $g^*$ is a polynomial of one variable.\vskip 1mm
\par
($\Leftarrow$) By a little calculation, we obtain
$$L(g)=\frac{i\tilde{b}_1}{a_1}=-\frac{i\tilde{b}_2}{a_2}\not=0\quad \mbox{and}\quad
L(f)=-iL(g)\frac{a_1e^{i{g}} +a_2e^{-i{g}}}{2id_2}.
$$
If $f$ is constant, then $L(f)\equiv 0$. Since $L(g)\neq 0$, so, $a_1e^{i{g}} +a_2e^{-i{g}}\equiv 0$, which implies that
$g$ is constant, consequently, $L(g)\equiv 0$. This is a contradiction.
Now, we check that $f$ is a solution to \eqref{E1.131}.  It suffices to verify $f$ and $L(f)$ to satisfy
\eqref{E1.27} with $d_1\equiv 0$ and $p=1$ since equations \eqref{E1.131} and \eqref{E1.27} with $d_1\equiv 0$ and $p=1$ are equivalent. Indeed,
\begin{align*}
\left(\begin{array}{c}
    L(f) \\ [5pt] f
\end{array}\right)
&   = \frac{1}{2id_2}\left(
  \begin{array}{cc}
   -iL(g)a_1  & -iL(g)a_2 \\ [5pt]  -a_1 & a_2
  \end{array}
\right)\left(
  \begin{array}{c}
   e^{ig} \\ [5pt]  e^{-ig}
  \end{array}
\right)\\
& =\frac{1}{2id_2}\left(
  \begin{array}{cc}
   \tilde{b}_1 & -\tilde{b}_2  \\ [5pt]  {-a_1} & a_2
  \end{array}
\right)\left(
  \begin{array}{c}
   e^{ig} \\ [5pt] e^{-ig}
  \end{array}
\right).
\end{align*}
Thus, the corollary is completely proved.
\end{proof}

\vskip.05in
When $p_3=p_5=0$
\,\,in \eqref{E1.131}, we derive two necessary-sufficient results on the form of entire solutions as below.

\begin{corollary}\label{1stCoro}
 Let  $k=\pm1$. Assume $p_1$ and $p$ are non-zero polynomials on $\C^n$.
 If $p_6$ is a non-zero constant and $p$ is irreducible, then $f$ is a transcendental entire solution to
\begin{equation}\label{coro4.2-1}
(p_1L(f))^2+(p_6f)^2=p
\end{equation}
if and only if
\begin{equation}\label{coro4.2-2}
f(z)=\frac{k}{2ip_6}(e^{ig}-pe^{-ig}),
\end{equation}
where $g$ is a non-constant entire function, $L(g)=kp_6/p_1$, $L(p)=0$, and $p_1$ is a non-zero constant.
%
\end{corollary}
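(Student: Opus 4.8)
The plan is to recognize the hypotheses as an instance of Case II and invoke Theorem \ref{T17}. Setting $p_2=p_3=p_4=p_5=0$ in \eqref{E1.1} and computing the reserved quantities gives $a_1=a_2=kp_1$, $b_1=b_2=0$, $\tilde{b}_1=-ip_6$, $\tilde{b}_2=ip_6$, $d_1=0$, $d_2=-p_1p_6$ and $D=0$. Since $p_1$ and $p_6$ are non-zero, we are in the situation $D\equiv 0$, $d_2\not\equiv 0$, $d_1\equiv 0$, so Theorem \ref{T17} applies. Substituting $a_1=a_2=kp_1$ and $d_2=-p_1p_6$ into \eqref{1-14-37} and using $k^2=1$ immediately produces the representation \eqref{coro4.2-2}. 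This fixes the shape of $f$; the remaining work is to extract the three side conditions $L(g)=kp_6/p_1$, $L(p)=0$ and "$p_1$ constant" in the forward direction, and to check sufficiency in the reverse direction.

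For the forward direction ($\Rightarrow$), I would first note that if $g$ were constant then \eqref{coro4.2-2} would make $f$ a polynomial, contradicting transcendence; hence $g$ is non-constant and falls under property (ii) of Theorem \ref{T17} with $d_1\equiv 0$, which supplies the two expressions
$$L(g)=\frac{i\tilde{b}_1}{a_1}+\frac{iL(a_1)}{a_1}-\frac{iL(d_2)}{d_2}=-\frac{i\tilde{b}_2}{a_2}-\frac{iL(pa_2)}{pa_2}+\frac{iL(d_2)}{d_2}.$$
Plugging in $a_1=a_2=kp_1$ and $d_2=-p_1p_6$ (with $p_6$ a constant) into the first expression, the two logarithmic-derivative terms $iL(p_1)/p_1$ cancel and leave $L(g)=kp_6/p_1$. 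Expanding the second expression via the product rule $L(pa_2)/(pa_2)=L(p_1)/p_1+L(p)/p$ gives $L(g)=kp_6/p_1-iL(p)/p$; equating the two forces $L(p)=0$. Finally, since property (ii) also guarantees that $L(g)$ is a polynomial while $L(g)=kp_6/p_1$ has a non-zero constant numerator, the denominator $p_1$ cannot have positive degree, so $p_1$ is a non-zero constant.

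For the reverse direction ($\Leftarrow$), I would verify the equation and transcendence by direct computation. Using $p_1$ constant, $L(p)=0$, $L(e^{\pm ig})=\pm iL(g)e^{\pm ig}$ and $L(g)=kp_6/p_1$, one obtains $p_1L(f)=\frac12(e^{ig}+pe^{-ig})$ and $p_6f=\frac{k}{2i}(e^{ig}-pe^{-ig})$, whence
$$(p_1L(f))^2+(p_6f)^2=\frac14\Big[(e^{ig}+pe^{-ig})^2-(e^{ig}-pe^{-ig})^2\Big]=\frac14\cdot 4\,e^{ig}\cdot pe^{-ig}=p.$$
For transcendence, adding the two identities yields $e^{ig}=p_1L(f)+ikp_6f$; were $f$ rational, then $L(f)$ would be rational and so would $e^{ig}$, but a zero-free, pole-free rational function is a non-zero constant, forcing $g$ constant and contradicting our hypothesis (alternatively, this follows from Lemma \ref{new1} together with Lemma \ref{L5}). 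Hence $f$ is transcendental.

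The main obstacle is the bookkeeping in the forward direction: one must correctly simplify the two expressions for $L(g)$, in particular spotting that the $iL(p_1)/p_1$ contributions cancel in the first and that the product rule splits off precisely $-iL(p)/p$ in the second, so that equating them isolates $L(p)=0$ cleanly. The subsequent deduction that $L(g)=kp_6/p_1$ being a polynomial forces $p_1$ to be constant is conceptually the crucial point distinguishing this corollary from the constant-coefficient setting, though it is technically light once the two expressions are in hand.
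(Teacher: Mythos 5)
Your proposal is correct and follows essentially the same route as the paper: reduce to Case II, apply Theorem \ref{T17} to get the representation \eqref{coro4.2-2}, extract the two expressions for $L(g)$ from the vanishing of $\gamma_1$ and $\delta_1$ in the $d_1\equiv 0$ branch of that proof (which, after the same cancellations, yield $L(g)=kp_6/p_1$, $L(p)=0$ and hence $p_1$ constant), and verify sufficiency by computing $p_1L(f)=\tfrac12(e^{ig}+pe^{-ig})$. Your explicit transcendence argument via $e^{ig}=p_1L(f)+ikp_6f$ is a small addition where the paper merely asserts the claim, but it does not change the method.
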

\begin{proof} ($\Rightarrow$) Since \eqref{coro4.2-1} is the case when $p_3=p_5=0$ in \eqref{E1.131},
$$d_2=-p_1p_6, \quad a_1=a_2=kp_1, \quad \tilde{b}_1=-ip_6\quad \mbox{and} \quad \tilde{b}_2=ip_6.
$$
By Theorem \ref{T17}, we get \eqref{coro4.2-2}.
Clearly, $f$ is transcendental if and only if $g$ is a non-constant entire function.
Therefore,  we obtain from the proof of Theorem \ref{T17} that $\gamma_1\equiv 0$ and $\delta_1\equiv 0$, which give
$$
L(g)=\frac{p_6}{kp_1}-i\frac{L(p_6)}{p_6}=\frac{p_6}{kp_1}
$$
and
$$
L(g)=\frac{p_6}{kp_1}-i\frac{L(p)}{p}+i\frac{L(p_6)}{p_6}=\frac{p_6}{kp_1}-i\frac{L(p)}{p}.
$$
Hence, $L(g)={kp_6}/p_1$ and $L(p)=0$. Since $L(g)$ is entire, $p_1$ is a non-zero constant function.

($\Leftarrow$)
We already have $L(p_6)=L(p)=0$. When $g$ is a non-constant entire function with $L(g)={kp_6}/p_1$, then $f$ in \eqref{coro4.2-2} is transcendental entire, and
\begin{equation*}
\begin{split}
L(f)&=L\left(\frac{k}{2ip_6}\right)(e^{ig}-pe^{-ig})+\frac{k}{2ip_6}\left(e^{ig}iL(g)-L(p)e^{-ig}+pe^{-ig}iL(g)\right)\\
&=\frac1{2p_1}(e^{ig}+pe^{-ig}).
\end{split}
\end{equation*}
Further, it is straightforward to verify that \eqref{coro4.2-1} is satisfied.
\end{proof}

\begin{corollary}\label{1stCoro'}
 Let $p_1$ and $p_6$ be non-zero polynomials on $\C^n$, $k=\pm1$, and $j\in \{1, \cdots, n\}$. Then $f$ is a non-constant entire solution to
\begin{equation}\label{coro4.2-3}
(p_1f_{z_j})^2+(p_6f)^2=1
\end{equation}
if and only if
\begin{equation}\label{coro4.2-4}
f(z)=\frac{k}{p_6}\sin (g) 
\end{equation}
where $g$ is a non-constant entire function, $g_{z_j}=kp_6/p_1$, $p_6$, and $p_1$ are non-zero constant functions.
%
\end{corollary}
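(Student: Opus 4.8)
My plan is to obtain both implications by specializing the apparatus of Theorem \ref{T17}, in exactly the spirit in which Corollary \ref{1stCoro} was deduced. First I would record the invariants for \eqref{coro4.2-3}: taking $L(f)=f_{z_j}$, $p_2=p_3=p_4=p_5=0$ and $p\equiv 1$, one computes $a_1=a_2=kp_1$, $\tilde b_1=-ip_6$, $\tilde b_2=ip_6$, $d_1=D=0$ and $d_2=-p_1p_6\not\equiv 0$. Thus the hypotheses $D\equiv 0$, $d_2\not\equiv 0$, $d_1\equiv 0$ of Theorem \ref{T17} hold, and the representation \eqref{1-14-37} collapses to
\[
f=\frac{p\,a_2e^{-ig}-a_1e^{ig}}{2id_2}=\frac{k}{p_6}\sin g,
\]
which is precisely the claimed form \eqref{coro4.2-4}.

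For the converse I would argue by direct substitution. If $p_1,p_6$ are non-zero constants, $g$ is non-constant entire, and $g_{z_j}=kp_6/p_1$, then $f=\frac{k}{p_6}\sin g$ is entire, and differentiation (using that $p_6$ is constant) gives $p_1f_{z_j}=\cos g$ and $p_6f=k\sin g$, so that $(p_1f_{z_j})^2+(p_6f)^2=\cos^2 g+\sin^2 g=1$. Non-constancy of $f$ follows because $\sin g\equiv\text{const}$ would confine $g$ to a discrete set and force $g$ to be constant.

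The forward direction carries the real content. From $f$ non-constant I would first deduce $g$ non-constant, since $g$ constant would make $f=k\sin g/p_6$ a constant divided by a polynomial, hence constant or identically zero. With $g$ non-constant, the final part of the proof of Theorem \ref{T17} (the case $d_1\equiv D\equiv 0$) yields $\gamma_1\equiv\delta_1\equiv 0$, which here become two expressions for $g_{z_j}$; subtracting them gives $(p_6)_{z_j}=0$ together with
\[
g_{z_j}=\frac{kp_6}{p_1}.
\]
Because $g_{z_j}$ is entire while $kp_6/p_1$ is rational, it must be a polynomial, so $p_1\mid p_6$. Differentiating $f$ and using $(p_6)_{z_j}=0$ again gives $p_1f_{z_j}=\cos g$ and $p_6f=k\sin g$, both entire; since $\cos^2 g+\sin^2 g=1$ these have no common zero, so $p_1$ and $p_6$ share no zeros. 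Combined with $p_1\mid p_6$, every zero of $p_1$ would be a zero of $p_6$, which is impossible, so $p_1$ is zero-free, i.e.\ a non-zero constant.

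The step I expect to be the main obstacle is the remaining assertion that $p_6$ is constant as well. At this point the structural analysis only delivers that $p_6$ is independent of $z_j$ and that $g_{z_j}=(k/p_1)p_6$; the sole additional tool is entireness of $f=k\sin g/p_6$, i.e.\ $p_6\mid\sin g$ as entire functions. I would attempt this by writing $g=(k/p_1)p_6\,z_j+h$ with $p_6$ and $h$ independent of $z_j$, and then comparing, along each irreducible component of the zero locus $\{p_6=0\}$, the vanishing order of $\sin g$ with the multiplicity of $p_6$. This is a delicate local divisibility computation and is exactly where the difficulty concentrates; I would scrutinize it carefully, since the earlier arguments by themselves yield only that $p_6$ is free of $z_j$ (with $p_1$ constant), and it is not a priori clear that full constancy of $p_6$ is forced without invoking the entireness of $f$ in an essential way.
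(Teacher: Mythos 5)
Your reduction to Theorem \ref{T17}, the computation of $a_1=a_2=kp_1$, $\tilde{b}_1=-ip_6$, $\tilde{b}_2=ip_6$, $d_1=D=0$, $d_2=-p_1p_6$, the derivation of $g_{z_j}=kp_6/p_1$ together with $(p_6)_{z_j}\equiv 0$ from $\gamma_1\equiv\delta_1\equiv 0$, and the conclusion that $p_1$ is a non-zero constant (entirety of $kp_6/p_1$ plus the co-primality of $p_1,p_6$ forced by the equation) all coincide with the paper's argument, as does your verification of the converse. The one step you explicitly leave open --- that $p_6$ must be constant --- is therefore a genuine gap in your proposal. For the record, the paper closes it by setting $h=(e^{2ig}-1)/p_6$ (entire iff $f$ is), computing $h_{z_j}=2ig_{z_j}e^{2ig}/p_6=(2ik/p_1)e^{2ig}$, integrating in $z_j$ to get $h=e^{2ig}/p_6+h_*$ with $(h_*)_{z_j}\equiv 0$ and $h_*$ \emph{claimed} to be entire, and concluding from $p_6h_*=-1$ that $p_6$ is constant.

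Your hesitation at this point, and your remark that it is ``not a priori clear that full constancy of $p_6$ is forced,'' are in fact well founded: the paper's integration argument tacitly assumes the integration ``constant'' $h_*=h-e^{2ig}/p_6$ is entire, which does not follow, since $e^{2ig}/p_6$ need not be entire even when $h$ is. The assertion itself fails. Take $n=2$, $j=1$, $k=1$, $p_1=1$, $p_6=z_2$, $g=z_1z_2$; then
\[
f=\frac{\sin(z_1z_2)}{z_2}=z_1\cdot\frac{\sin(z_1z_2)}{z_1z_2}
\]
is a non-constant entire function with $f_{z_1}=\cos(z_1z_2)$ and $z_2f=\sin(z_1z_2)$, so $(p_1f_{z_1})^2+(p_6f)^2=1$ although $p_6$ is non-constant (here $h_*=-1/z_2$ is not entire). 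Consequently, the local divisibility analysis along $\left\{p_6=0\right\}$ that you propose cannot succeed in forcing $p_6$ to be constant: the ``only if'' direction of the corollary as stated is false, and the missing step in your proposal is not merely hard but unprovable without additional hypotheses.
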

\begin{proof} ($\Rightarrow$) By Theorem \ref{T17}, we easily get \eqref{coro4.2-4}.
If $g$ is a constant function,
then $f$ is constant by  \eqref{coro4.2-4}. Thus, we conclude that $g$ is non-constant.
Similarly as in proof of Corollary \ref{1stCoro}, we have
$$
{g}_{z_j}=\frac{p_6}{kp_1}-i\frac{(p_6)_{z_j}}{p_6}\quad \text{and}\quad {g}_{z_j}=\frac{p_6}{kp_1}+i\frac{(p_6)_{z_j}}{p_6}.
$$
Hence, $g_{z_j}={kp_6}/p_1$ and $(p_6)_{z_j}\equiv 0$. Also,  \eqref{coro4.2-3} implies that $p_1$ and $p_6$ are
co-prime. Therefore, $p_1$ is a non-zero constant function.
Now, we need to prove that $p_6$ is constant.
Indeed, \eqref{coro4.2-4}
implies that $f$ is entire if and only if $(e^{2ig}-1)/p_6:=h$ is entire. Therefore, 
$h_{z_j}=2ie^{2ig}g_{z_j}/p_6$. Hence, we fix $z_1,\cdots, z_{j-1}, z_{j+1}, \cdots, z_n$, treat $h_{z_j}$ as a function of the variable $z_j$ and obtain
$$
h=\int \frac{2ig_{z_j}}{p_6}e^{2ig} dz_j=\frac1{p_6}e^{2ig}+h_*,
$$
where $h_*$ is entire with $(h_*)_{z_j}\equiv 0$.
Combining this with the definition of $h$ shows $ -1=p_6h_*$. Consequently, both $p_6$ and $h_*$ are constant.

($\Leftarrow$)
If $g$ is a non-constant entire function, $(p_6)_{z_j}\equiv 0$ and $g_{z_j}={kp_6}/p_1$, it is straightforward to verify that $f$ in \eqref{coro4.2-4} satisfies the equation \eqref{coro4.2-3}.
\end{proof}

\section{Case III.\,$D\equiv 0,$ $d_1\not\equiv 0$ and $d_2\equiv 0$}
When $D\equiv p_1p_4-p_2p_3\equiv 0$, we have
$$p_1d_1=-p_2d_2,\quad p_3d_1=-p_4d_2.$$
This means that once one of $p_1,p_3$ is non-zero, $d_2\equiv 0$ implies $d_1\equiv 0$, which contradicts with $d_1\not\equiv 0$. Thus in this case,
we know $p_1\equiv p_3\equiv 0$, so, $L(f)$ term does not appear in \eqref{E1.1}, and \eqref{E1.1} degenerates to
\begin{equation}\label{E1.20}
\left(p_{2}\overline{f}+p_5 f\right)^{2}+\left(p_4\overline{f}+p_6f\right)^{2}=p\end{equation}

\begin{theorem} \label{T1.3}
Let $c, p_2, p_5, p_4, p_6, d_1, p, b_1, b_2, \tilde{b}_1$, and $\tilde{b}_2$ be defined as in Theorem \ref{T17}.
Then every entire solution $f$ to \eqref{E1.20} on $\mathbb{C}^{n}$  has the form:
\begin{eqnarray}\label{E1.3-4}
f(z)=\frac{b_1e^{ig}-pb_2e^{-ig}}{2id_1},
\end{eqnarray} where
$g$ is an entire function having one of the following properties:\par
\begin{enumerate}
\item[(i)]  $g$ is constant.\par

\item[(ii)] $g$ is transcendental, and further, $\overline{g}+g$ is constant when either $\tilde{b}_1\equiv b_2\equiv 0$ with
 $p_5$ and $p/p_2$ being non-zero constants,
or $\tilde{b}_2\equiv b_1\equiv 0$ with $p_2$ and $p/p_5$ being non-zero constants.\par
\item[(iii)]
$
(d_1)^2\overline{pb_1b_2}\equiv\overline{d_1}^2p\tilde{b}_1\tilde{b}_2\not\equiv0,
$
and
either $e^{i(\overline{g}+g)}=\frac{\overline{d_1}p\tilde{b}_2}{d_1\overline{b}_1}$ or $e^{i(\overline{g}-g)}=-\frac{\overline{d_1}\,\tilde{b}_1}{d_1\overline{b_1}}$ is constant.

\end{enumerate}
 \end{theorem}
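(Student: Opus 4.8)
The plan is to mirror the matrix approach used for Theorems \ref{T1} and \ref{T17}. First I would record the reduction already noted before the statement: the hypotheses $D\equiv 0$, $d_1\not\equiv 0$, $d_2\equiv 0$ force $p_1\equiv p_3\equiv 0$, so the $L(f)$-term disappears and \eqref{E1.1} collapses to \eqref{E1.20}. I would then apply Lemma \ref{equiv} with the choice $s(f)=\overline{f}$, $t(f)=f$ and $u(f)\equiv 0$; here $\xi_0=p_2p_6-p_4p_5=d_1\not\equiv 0$, and the auxiliary quantities of the lemma specialize to $\tau_1=b_1$, $\tau_2=b_2$, $\zeta_1=\tilde b_1$, $\zeta_2=\tilde b_2$. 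The bottom row of \eqref{lem2eq} is exactly the asserted representation \eqref{E1.3-4}, while the top row expresses $\overline{f}=(-\tilde b_1 e^{ig}+p\tilde b_2 e^{-ig})/(2id_1)$.

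The decisive identity comes from computing $\overline{f}$ in two ways: from the top row above and by shifting formula \eqref{E1.3-4}. Equating them and clearing denominators yields
$$\alpha e^{i\overline{g}}+\beta e^{-i\overline{g}}+\gamma e^{ig}+\delta e^{-ig}=0,$$
with $\alpha=d_1\overline{b_1}$, $\beta=-d_1\overline{pb_2}$, $\gamma=\overline{d_1}\tilde b_1$, $\delta=-\overline{d_1}p\tilde b_2$, all polynomials. Writing $h:=\overline{g}$ so that $\underline{h}=g$, this is precisely the shape treated by Lemmas \ref{new3} and \ref{new2}, the polynomial coefficients playing the role of the $u$-polynomials so that $T(r,u)=O(\log r)=O(T(r,h))$ whenever $g$ is non-constant. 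Using $b_1b_2=p_2^2+p_4^2$ and $\tilde b_1\tilde b_2=p_5^2+p_6^2$, a short computation shows $d_1\not\equiv 0$ forces at most one of $\{\alpha,\beta\}$ and at most one of $\{\gamma,\delta\}$ to vanish; in particular no three of them vanish, and the only admissible ``two vanish'' patterns are $\alpha\equiv\delta\equiv 0$ and $\beta\equiv\gamma\equiv 0$ (the patterns $\alpha\equiv\gamma\equiv 0$ and $\beta\equiv\delta\equiv 0$ each force $d_1\equiv 0$, a contradiction).

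I would then finish by a three-way case split. If all four coefficients are non-zero, Lemma \ref{new2} gives $\alpha\beta-\gamma\delta\equiv 0$, which is exactly $(d_1)^2\overline{pb_1b_2}\equiv\overline{d_1}^2 p\tilde b_1\tilde b_2$, together with $e^{i(\overline{g}+g)}=-\delta/\alpha$ or $e^{-i(\overline{g}-g)}=-\delta/\beta$; using $\alpha\beta=\gamma\delta$ to rewrite $\delta/\beta=\alpha/\gamma$ converts the second alternative into the stated $e^{i(\overline{g}-g)}=-\overline{d_1}\tilde b_1/(d_1\overline{b_1})$, which is property (iii). If exactly one coefficient vanishes, Lemma \ref{new3} forces $h=\overline{g}$, hence $g$, to be constant, i.e.\ property (i). If exactly two vanish we are in one of the two admissible patterns; the equation degenerates to a single relation $e^{i(\overline{g}+g)}=-\delta/\alpha$ equal to a rational function, and since an entire, nowhere-vanishing function equal to a rational function must be a non-zero constant, $\overline{g}+g$ is constant. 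If $g$ is a polynomial, Lemma \ref{const} applied to $\overline{g}+g$ forces $g$ constant, landing in (i); otherwise $g$ is transcendental, which is property (ii).

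The main obstacle, and the place needing the most care, is extracting the explicit coefficient conditions in property (ii): that $p_5$ and $p/p_2$ (respectively $p_2$ and $p/p_5$) are non-zero constants. This does not follow from the exponential equation alone; it uses that $f$ itself must be entire. For instance, in the pattern $b_2\equiv\tilde b_1\equiv 0$ one computes $b_1=2kp_2$, $\tilde b_2=2kp_5$, $d_1=-2ikp_2p_5$, whence \eqref{E1.3-4} simplifies to $f=e^{ig}/(2p_5)$; entirety of $f$, with $e^{ig}$ zero-free, forces the polynomial $p_5$ to be a non-zero constant, and then the already-established constancy of $e^{i(\overline{g}+g)}=\overline{p_5}\,p/p_2=p_5\,p/p_2$ forces $p/p_2$ to be a non-zero constant as well. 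The symmetric pattern $b_1\equiv\tilde b_2\equiv 0$ is handled identically with the roles of $p_2$ and $p_5$ interchanged.
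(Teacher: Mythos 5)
Your proposal is correct and follows essentially the same route as the paper's proof: Lemma \ref{equiv} with $s(f)=\overline{f}$, $t(f)=f$, $u(f)\equiv 0$ to get \eqref{E1.3-4}, the exponential identity obtained by equating the shifted representation of $f$ with the top row of the matrix equation, and the three-way case split handled by Lemmas \ref{new3} and \ref{new2}, with the degenerate two-coefficient patterns yielding property (ii) via entirety of $f$ and Lemma \ref{const}. The only (harmless) cosmetic differences are your sign conventions for $\gamma,\delta$ and your slightly more explicit justification of which pairs of coefficients can vanish.
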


\begin{proof}
Let $f$ be an entire solution to \eqref{E1.20}. Applying Lemma \ref{equiv} with $s(f)=\overline{f}$, $t(f)=f$ and
$u(f)\equiv 0$ gives
\begin{eqnarray}\label{E1.21}
\left(\begin{array}{c}
  \overline{f}\\ [5pt]  f
\end{array}\right)
= \frac{1}{2id_1}\left(
  \begin{array}{cc}
 -\tilde{b}_1   & p\tilde{b}_2 \\ [5pt]  b_1 & -p b_2
  \end{array}
\right)\left(
  \begin{array}{c}
   e^{ig} \\  [5pt] e^{-ig}
  \end{array}
\right),
\end{eqnarray} where $\tilde{b}_2=kp_5-ip_6$,  $\tilde{b}_1=kp_5+ip_6$ and $k=\pm 1$. Therefore,

\begin{equation*}
f(z)=\frac{1}{2id_1}(b_1e^{ig}-pb_2e^{-ig}),
\end{equation*}
which is \eqref{E1.3-4}.
By equaling two expressions of $f$ in \eqref{E1.21} and \eqref{E1.3-4}, we yield
\begin{align*}
  \frac{1}{2i\overline{d_1}}\left(\overline{b_1}e^{i\overline{g}}-\overline{pb_2}e^{-i\overline{g}}\right)
=\frac1{2id_1}\left(-\tilde{b}_1e^{ig}+p\tilde{b}_2e^{-ig}\right),
 \end{align*}
 which gives
\begin{equation}\label{E1.23}
\alpha_2 e^{i\overline{g}}+\beta_2 e^{-i\overline{g}}=\gamma_2 e^{ig}+\delta_2 e^{-ig},
\end{equation}
where $\alpha_2(z) = d_1\overline{b_1},\ \
 \beta_2(z) =-d_1\overline{pb_2},\ \
\gamma_2(z) =-\overline{d_1}\tilde{b}_1$ and
  $\delta_2(z) =\overline{d_1}p\tilde{b}_2$.
If $\alpha_2\equiv \beta_2\equiv 0$, i.e., $b_1\equiv b_2\equiv 0$, then $d_1\equiv 0$, which is impossible. Similarly, $\gamma_2\equiv \delta_2\equiv0$, i.e., $\tilde{b}_1\equiv \tilde{b}_2\equiv 0$, we obtain a contradiction to $d_1\not\equiv 0$ again. \vskip 1mm
\par
If $g$ is constant, then the conclusion (i) in the theorem is already obtained. We assume below that $g$ is non-constant, so $e^{i\overline{g}}$ is transcendental,  one of $\overline{g}+g$ and $\overline{g}-g$ is also non-constant and $\log r=o(T(r, e^{i\overline{g}})).$\par

To proceed the proof, we consider it case-by-case. \vskip 1mm
\par

{\it Case 1:} exactly two of $\alpha_2,\beta_2,$ $\gamma_2$, and $\delta_2$ are identical to zero.\vskip 1mm
\par
{\it Sub-case 1.1:}\,$\alpha_2\equiv \gamma_2 \equiv 0$. We have $b_1\equiv\tilde{b}_1\equiv 0$, which implies $d_1\equiv 0$. This means that \eqref{E1.20} does not have entire solutions.\vskip 1mm
\par
{\it Sub-case 1.2:} $\alpha_2\equiv\delta_2\equiv 0,$ i.e., $b_1\equiv \tilde{b}_2\equiv 0$. We obtain
$d_1=2kip_2p_5$,
$$\overline{f}=\frac{1}{2p_2}e^{ig}\quad \mbox{and}\quad {f}=\frac{p}{2p_5}e^{-ig}.$$
It follows from these equations that
$p_2$ is constant,  $p/p_5$ is a non-zero polynomial, and further,
 $$e^{i(\overline{g}+g)}=\frac{p_2\overline{p}}{\overline{p_5}}.$$
 This derives that $\overline{g}+g$ is constant, $p/p_5$ is constant and $g$ is transcendental. The property (ii) in the theorem is proved.\vskip 1mm
\par
{\it Sub-case 1.3:}\, $\beta_2\equiv \gamma_2 \equiv 0,$ i.e.,
$b_2\equiv \tilde{b}_1\equiv 0$. We have $d_1=-2kip_2p_5, b_1=2kp_2$ and $\tilde{b}_2=2kp_5.$ Using these relations in \eqref{E1.21} gives
$$f(z)=\frac{1}{2p_5}e^{ig} \qquad \mbox{and} \qquad
\overline{f}=\frac{p}{2p_2}e^{-ig}.$$
These equations imply that $p_5$ is a non-zero constant, $p/p_2$ is a non-zero polynomial  and furthermore,
$$e^{i(\overline{g}+g)}=\frac{p\overline{p_5}}{p_2}$$ 
It follows that $\overline{g}+g$ is constant, $p/p_2$ is a non-zero constant function and $g$ is transcendental by Lemma \ref{const}. Thus, $g$ has the property (ii) in the theorem.\vskip 1mm
\par
{\it Sub-case 1.4:}\,$\beta_2\equiv\delta_2\equiv 0$, i.e., $b_2\equiv\tilde{b}_2\equiv 0$. We obtain a contradiction to $d_1\not\equiv 0$.
This means that \eqref{E1.20} does not have any entire solutions.
\vskip 1mm
\par
{\it Case 2:}\,exactly one of $\beta_2,$ $\gamma_2$ and $\delta_2$ is equal to zero.
An application of Lemma \ref{new3} yields that $\overline{g}$ is constant, which is the property (i) in the theorem.\vskip 1mm
\par
{\it Case 3:}\,$\alpha_2\beta_2\gamma_2\delta_2\not\equiv 0$. Clearly,
$$
\alpha_2\beta_2-\gamma_2\delta_2=-(d_1)^2\overline{pb_1b_2}+\overline{d_1}^2p\tilde{b}_1\tilde{b}_2.
$$
Applying Lemma \ref{new2} to \eqref{E1.23} with $u=1$ and $m=0$ gives
$$
-(d_1)^2\overline{pb_1b_2}+\overline{d_1}^2p\tilde{b}_1\tilde{b}_2\equiv 0,
$$
and either
$$e^{i(\overline{g}+g)}=\frac{\delta_2}{\alpha_2}\quad \mbox{or} \quad
e^{-i(\overline{g}-g)}\equiv \frac{\delta_2}{\beta_2}.
$$
These imply that  $\overline{g}+g$ and $e^{i(\overline{g}+g)}$ or $\overline{g}-g$ and $e^{i(\overline{g}-g)}$ are constant since $\delta_2/\alpha_2$ and $\delta_2/\beta_2$ are rational functions.
Hence, in this situation, the property (iii) in the theorem happens.
\end{proof}
\vskip.05in
\begin{remark}
\cite[Theorem 2.1]{zhengXu-2022} states that \eqref{E1.20} does not have any transcendental entire solution with
finite order if $p_5=-p_6=p_4=p=1$ and $p_2=0$. Our Theorem \ref{T1.3} strengthens \cite[Theorem 2.1]{zhengXu-2022} and shows \eqref{E1.20} only has constant solutions in this setting since $b_1b_2=1$ and $\tilde{b}_1\tilde{b}_2=2$.
\end{remark}

The following two examples show that both (ii) and (iii) in Theorem \ref{T1.3} could happen.\par

\begin{example}Let $p$ be any nonzero complex number and
$$p_5=p_2=i \quad \mbox{and} \quad p_6=-p_4=-1.$$
Set $g(z_1, z_2)=e^{iz_2}+a$ and $c=(0, c_2)\in \C^2\setminus\{0\}$, where $a\in\mathbb{C}$ and $c_2\in \C$ satisfy $e^{i2a}=p$ and $e^{ic_2}=-1, respectively.$ Then $f(z)=\frac{p}{2i}e^{-ig}$ is an entire solution to the equation \eqref{E1.20} and the property (ii) in Theorem \ref{T1.3} exists.
\end{example}

\begin{proof}
Since $\overline{g}=e^{i(z_2+c_2)}+a$, thus $g+\overline{g}=e^{iz_2}(1+e^{ic_2})+2a=2a$, and we have $e^{i(g+\overline{g})}=e^{i2a}=p.$ Hence,
\begin{eqnarray*}
\left(i\overline{f}+if\right)^2+\left(\overline{f}-f\right)^2=-4f\overline{f}=p^2e^{-i(g+\overline{g})}=p.
\end{eqnarray*}
Clearly, $d_1=-2i$, and
$$ \tilde{b}_1 =(k+1)i, \ \   \tilde{b}_2 =(k-1)i, \ \
b_1  =(k-1)i, \ \ b_2 =(k+1)i.
$$
Thus, $f$ has the form \eqref{E1.3-4} with $k=1$ and is an entire solution to the equation \eqref{E1.20}. Now $\tilde{b}_2=b_1=0$, so the property (ii) in Theorem \ref{T1.3} exists.
\end{proof}

\begin{example}Let  $p$ be any nonzero constant and
$$p_5=p_2=-p_6=p_4=1.$$
Set $g(z_1, z_2)=z_1+h(z_2)$ and $c=(c_1, 0) \in \C^2\setminus\{0\}$ such that $e^{ic_1}=-i,$ where $h(z_2)$ is a transcendental entire function of the variable $z_2.$ Then
$$f(z)=\frac{1}{4}\left((1+i)e^{ig}+p(1-i)e^{-ig}\right)$$
is an entire solution to the equation \eqref{E1.20} and the property (iii) in Theorem \ref{T1.3} exists.
\end{example}

\begin{proof}
Let $k=1$. Then $d_1=-2$,
$$ \tilde{b}_1 =1+i, \quad   \tilde{b}_2 =1-i, \quad
b_1  =1-i  \quad \mbox{and} \quad  b_2 =1+i.
$$
Thus,
$$
f=\frac14\left(1+i \quad p(1-i)\right)
\left(\begin{array}{c} e^{ig} \\ [5pt] e^{-ig}\end{array}
\right)
=\frac1{-4i}\left(b_1 \quad -pb_2\right)
\left(\begin{array}{c} e^{ig} \\ [5pt] e^{-ig}\end{array}
\right).
$$
Since $e^{i\overline{g}}=e^{ig+ic_1}=-ie^{ig}$, we obtain
$$
\overline{f} =\frac14\left(1+i \quad p(1-i)\right)
\left(\begin{array}{c} e^{i\og} \\ [5pt] e^{-i\og}\end{array}
\right)
=\frac1{-4i}\left(-\tilde{b}_1 \quad p\tilde{b}_2\right)
\left(\begin{array}{c} e^{ig} \\ [5pt] e^{-ig}\end{array}
\right).
$$
Hence, $(\overline{f} \quad f)^T$ satisfies the matrix equation \eqref{E1.21}, consequently, $f$
is an entire solution to the equation \eqref{E1.20}.
Clearly,
$$
e^{i(\overline{g}-g)}=e^{ic_1}=-i=-\frac{\overline{d_1}\,\tilde{b}_1}{d_1\overline{b_1}}\quad \mbox{and} \quad
(d_1)^2\overline{pb_1b_2}=\overline{d_1}^2p\tilde{b}_1\tilde{b}_2=8p \not=0.
$$
\end{proof}

The property (ii) in Theorem \ref{T1.3} can be strengthened to be the following theorem.

\begin{theorem}\label{3rdIFF}
Let $f$ be a non-constant entire solution to \eqref{E1.20} as in Theorem \ref{T1.3}, where $g$ is transcendental. Then we have:
\begin{enumerate}
\item[(a)] 
$
f(z)=\frac{1}{2p_5}e^{ig}$ and $e^{i(\overline{g}+g)} =\frac{p\overline{p_5}}{p_2}
$
if and only if $\tilde{b}_1\equiv b_2\equiv 0$, where $p_5$ and $\frac{p}{p_2}$ are non-zero constants.
\vskip.1in
\item[(b)]
$
f(z)=\frac{p}{2p_5}e^{-ig}$ and $e^{i(\overline{g}+g)} =\frac{p_2\overline{p}}{\overline{p_5}}
$
if and only if $\tilde{b}_2\equiv b_1\equiv 0$, where $p_2$ and $\frac{p}{p_5}$ are non-zero constants.
\end{enumerate}
\end{theorem}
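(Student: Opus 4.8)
The plan is to read off both directions of each equivalence from the representation \eqref{E1.3-4} and the matrix equation \eqref{E1.21} furnished by Theorem \ref{T1.3}, using throughout the hypothesis that $g$ is transcendental to guarantee that $e^{ig}$ and $e^{-ig}$ are linearly independent over the rational functions. Concretely, if $Ae^{ig}+Be^{-ig}\equiv 0$ with $A,B$ rational, then $A\not\equiv0$ would force $e^{2ig}=-B/A$ to be rational; but by Lemma \ref{new1} one has $T(r,e^{2ig})/T(r,g)\to\infty$ while every rational function has characteristic $O(\log r)$, a contradiction, so $A\equiv B\equiv0$. (This is the two-term instance of the Borel-type Lemma \ref{borel}, whose only exponent difference $2ig$ is non-constant.) I will invoke this elementary fact repeatedly. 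Since parts (a) and (b) are interchanged by swapping the two factorizations, I will carry out (a) and only indicate that (b) is symmetric.

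For the direction ($\Leftarrow$) of (a), assuming $\tilde b_1\equiv b_2\equiv0$ is exactly Sub-case 1.3 in the proof of Theorem \ref{T1.3}: substituting $\tilde b_1\equiv b_2\equiv0$, so that $b_1=2kp_2$, $\tilde b_2=2kp_5$ and $d_1=-2kip_2p_5$, into \eqref{E1.3-4} and \eqref{E1.21} gives at once $f=\tfrac{1}{2p_5}e^{ig}$ and $\overline f=\tfrac{p}{2p_2}e^{-ig}$. Entirety of $f$ and of $\overline f$ forces $p_5$ and $p/p_2$ to be polynomials without zeros, hence non-zero constants; and equating $\overline f=\tfrac{1}{2p_5}e^{i\overline g}$ with $\tfrac{p}{2p_2}e^{-ig}$ yields $e^{i(\overline g+g)}=p\overline{p_5}/p_2$, which, being a nowhere-vanishing entire function that equals a rational one, must be constant.

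For the direction ($\Rightarrow$) of (a) I will use the two equations in turn. First, comparing the assumed form $f=\tfrac{1}{2p_5}e^{ig}$ with the representation $f=\tfrac{1}{2id_1}\bigl(b_1e^{ig}-pb_2e^{-ig}\bigr)$ of \eqref{E1.3-4}, rearranging into $\bigl(\tfrac{1}{2p_5}-\tfrac{b_1}{2id_1}\bigr)e^{ig}+\tfrac{pb_2}{2id_1}e^{-ig}=0$, and applying the independence above gives $pb_2\equiv0$, hence $b_2\equiv0$. Second, I will use the companion equation $\overline f=\tfrac{1}{2id_1}\bigl(-\tilde b_1e^{ig}+p\tilde b_2e^{-ig}\bigr)$ from \eqref{E1.21} together with $\overline f=\tfrac{1}{2p_5}e^{i\overline g}$. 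This is a relation among the three exponentials $e^{i\overline g},e^{ig},e^{-ig}$, to which Lemma \ref{borel} cannot be applied directly because $\overline g+g$ is constant; here the hypothesis $e^{i(\overline g+g)}=p\overline{p_5}/p_2$ is essential: substituting $e^{i\overline g}=(p\overline{p_5}/p_2)e^{-ig}$ collapses the identity to the two-term form $\tfrac{\tilde b_1}{2id_1}e^{ig}+\bigl(\tfrac{p}{2p_2}-\tfrac{p\tilde b_2}{2id_1}\bigr)e^{-ig}=0$, and independence again forces $\tilde b_1\equiv0$. Together with $b_2\equiv0$ this proves (a); part (b) follows by the same two applications after swapping the roles of the two systems, which leads to $b_1\equiv0$ and $\tilde b_2\equiv0$.

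The main obstacle is precisely this collapse step: the three exponents $i\overline g$, $ig$, $-ig$ have a constant pairwise difference $\overline g+g$, so the naive appeal to the Borel lemma fails, and one must first use the prescribed value of $e^{i(\overline g+g)}$ to eliminate $e^{i\overline g}$ before the independence of $e^{\pm ig}$ can be exploited. A secondary point to keep clean is the bookkeeping tying $\tilde b_1\equiv b_2\equiv0$ (resp. $\tilde b_2\equiv b_1\equiv0$) to the correct sign convention $k=\pm1$ inherited from Theorem \ref{T1.3}, so that the derived coefficient identities are mutually consistent; this is routine once the two independence applications are in place.
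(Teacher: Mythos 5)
Your proposal is correct and follows essentially the same route as the paper: the forward direction reads $b_2\equiv 0$ off the two-term identity obtained by comparing the assumed form of $f$ with \eqref{E1.3-4}, then substitutes the prescribed value of $e^{i(\overline g+g)}$ into the relation between $\overline f$'s two expressions (the paper phrases this via \eqref{E1.23}, you via \eqref{E1.21}, which is the same identity) to collapse it to a two-term equation forcing $\tilde b_1\equiv 0$; the converse is, as you say, exactly Sub-case 1.3 (resp.\ 1.2) of Theorem \ref{T1.3}. The only cosmetic slip is writing $\overline f=\tfrac{1}{2p_5}e^{i\overline g}$ rather than $\tfrac{1}{2\overline{p_5}}e^{i\overline g}$ before constancy of $p_5$ is established, which does not affect the argument.
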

\begin{proof} ($\Leftarrow$) in (a) and (b):  It is directly from {\it Case 1} in the proof Theorem \ref{T1.3}.

 ($\Rightarrow$) in (a): Since
$
f(z)=\frac{1}{2p_5}e^{ig},
$
we obtain from  \eqref{E1.3-4} that
$$
\left(\frac{b_1}{2id_1}-\frac{1}{2p_5}\right)e^{ig}-\frac{pb_2}{2id_1}e^{-ig} \equiv 0.
$$
Therefore, $b_2\equiv0$, i.e., $\beta_2\equiv 0$. Thus by $e^{i(\overline{g}+g)}=\frac{p\overline{p_5}}{p_2}$, \eqref{E1.23} turns to be
$$
\alpha_2\frac{p\overline{p_5}}{p_2}=\gamma_2e^{2ig}+\delta_2.
$$
Since $g$ is not constant, so $\gamma_2\equiv 0$, i.e., $\tilde{b}_1\equiv 0$.

($\Rightarrow$) in (b): Similar to (a), by
$
f(z)=\frac{p}{2p_5}e^{-ig}
$
and \eqref{E1.3-4}, we yield
$$
\frac{b_1}{2id_1}e^{ig}-\left(\frac{pb_2}{2id_1}+\frac{p}{2p_5}\right)e^{-ig} \equiv 0.
$$
Since $g$ is non-constant, so $b_1\equiv0$, i.e., $\alpha_2\equiv 0$. Thus by $e^{i(\overline{g}+g)} =\frac{p_2\overline{p}}{\overline{p_5}}$,
\eqref{E1.23} turns to be
$$
\beta_2\frac{\overline{p_5}}{p_2\overline{p}}=\gamma_2+\delta_2e^{-2ig},
$$
which implies that $\delta_2\equiv 0$, i.e., $\tilde{b}_2\equiv 0$.
\end{proof}




\section{Case IV.\, $D\not\equiv 0$ and $d_2\not\equiv 0$}
\par In this section, we consider a special case of $L(f)$ as
 $$L^*(f):=\sum_{j=1}^{s}q_jf_{z_j}, \quad (1\leq s\leq n)$$
 where all $q_{j}$ are non-zero constants in $\mathbb{C}$. Clearly, $L^{*}(\overline{f})=\overline{L^*(f)}.$

\begin{theorem} \label{T6.1}Let $p_1, \ldots, p_6, p$ be defined as in Theorem \ref{T1} and $Dd_2\not\equiv 0$.
Set
$$
 A\stackrel{def}{=}\left(\begin{array}{cc}
   -d_2 &  D  \\ [.2cm]
  -L^*(d_2)-\frac{d_1d_2}{D} &  L^*(D)+\frac{D\overline{d_1}}{\overline{D}} \\
  \end{array}\right) \ \mbox{and} \ |A|\stackrel{def}{=}\det A.
  $$
Assume that $f$ is an  entire solution to
\begin{align}\label{E23.8.1}
(p_1L^*(f)+p_2\of+p_5f)^2+(p_3L^*(f)+p_4\of+p_6 f)^2=p.
\end{align}

If $|A|\not\equiv 0$, then
\begin{equation}\label{E23-8-11}
f = \frac{1}{2i|A|}K\left(
  \begin{array}{c}
   e^{ig} \\[.2cm]  e^{-ig}
  \end{array}
  \right)
+\frac1{2i|A|}F \left(
  \begin{array}{c}
   e^{i\overline{g}} \\[.2cm]  e^{-i\overline{g}}
  \end{array}
\right),
\end{equation}
where $g$ is an entire function,
$$\ds F=
\left( - \frac{\overline{b_1}D^2}{\overline{D}} \qquad  \frac{\overline{pb_2}D^2}{\overline{D}} \right),
$$
 and
$K=(k_{11}\quad k_{12})$ is a $1\times 2$ matrix with elements
\begin{equation*}
\begin{split}
k_{11}&=a_1(L^*(D)+\frac{D\overline{d_1}}{\overline{D}})+b_1d_2-D(L^*(a_1)+ia_1L^*(g)) \ \mbox{and} \\
k_{12}&=-pa_2(L^*(D)+\frac{D\overline{d_1}}{\overline{D}})-pb_2d_2+D(L^*(pa_2)-ipa_2L^*(g)).
\end{split}
\end{equation*}
Further, if $b_1b_2\not\equiv 0$, then $g$ has one of the following properties:
\begin{enumerate}
\item[(i)] $g$ is constant.
\item[(ii)] $L^*(\og)\equiv 0$.
\item[(iii)] $L^*(\og)$ is a polynomial, and either $g+\overline{g}$ or $g-\overline{g}$ is constant.
\item[(iv)] $L^*(g)+L^*(\og)$ and $L^*(g)-L^*(\og)$ are transcendental.
\end{enumerate}
If $b_1b_2\equiv 0$, then $g$ and $\ug$ satisfy a non-linear partial differential equation with degree $4$.

If $|A|\equiv 0$, then we have
$$-2id_2f+2iD\of=a_1e^{ig}-pa_2e^{-ig}$$
and
$$2iDL^*(f)-2id_1f=-b_1e^{ig}+pb_2e^{-ig},$$
where $g$ is an entire function having  one of the following properties:
\begin{enumerate}
\item[(a)] $g$ is constant.
\item[(b)] $L^*(g)\equiv 0$ and either $g+\overline{g}$ or $g-\overline{g}$ is constant.
\item[(c)] $L^*(g)$ is a polynomial and $g-\overline{g}$ is constant
\end{enumerate}
\end{theorem}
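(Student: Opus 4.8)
The plan is to run the matrix reduction that already drives Theorems \ref{T1} and \ref{T17}, but now to carry along all three ingredients $L^*(f)$, $\of$ and $f$ at once, since neither $D$ nor $d_2$ vanishes. First I would invoke Lemma \ref{equiv} with $s(f)=L^*(f)$, $t(f)=\of$ and $u(f)=f$; here $\xi_0=D$, $\xi_1=d_1$ and $\xi_2=d_2$, so the lemma delivers the two scalar identities
\[
2iDL^*(f)-2id_1f=-b_1e^{ig}+pb_2e^{-ig},\qquad -2id_2f+2iD\of=a_1e^{ig}-pa_2e^{-ig}.
\]
These are precisely the two displayed equations asserted in the $|A|\equiv 0$ part, so that portion of the conclusion is immediate; the substance is the ensuing classification of $g$.

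The key manoeuvre is to manufacture a \emph{second} linear relation between $f$ and $\of$, so that $(f,\of)$ solves a $2\times2$ system whose coefficient matrix is $A$. To this end I would apply $L^*$ to the second identity, use $L^*(\of)=\overline{L^*(f)}$ (legitimate because the $q_j$ are constants), and eliminate $L^*(f)$ and $\overline{L^*(f)}$ via the first identity and its shift. Collecting the $f$- and $\of$-terms reproduces exactly the entries of $A$, against a right-hand side assembled from $a_1e^{ig}-pa_2e^{-ig}$, $-b_1e^{ig}+pb_2e^{-ig}$ and their $L^*$-images and shifts. When $|A|\not\equiv0$, Cramer's rule then solves for $f$; bookkeeping of the exponentials $e^{\pm ig}$ (from the current point) and $e^{\pm i\og}$ (entering through the shifted first identity) yields the representation \eqref{E23-8-11}, with $K$ and $F$ exactly as stated. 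This is where the matrix formalism keeps the otherwise forbidding algebra tractable.

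For the sharpened description of $g$ when $b_1b_2\not\equiv0$, I would feed \eqref{E23-8-11} back into the first identity $2iDL^*(f)-2id_1f=-b_1e^{ig}+pb_2e^{-ig}$, which has not yet been fully exploited (the representation only encodes an $L^*$-differentiated combination). Since its right side carries no $e^{\pm i\og}$, the $e^{\pm i\og}$-coefficients generated by $L^*$ acting on the $F$-block must cancel; because $b_1b_2\not\equiv0$ both components of $F$ are nonvanishing, and matching these coefficients forces $iL^*(\og)$ to equal an explicit rational function of the data, whence $L^*(\og)$ is a polynomial. One is thereby reduced to a four-term relation $\tilde\alpha e^{ig}+\tilde\beta e^{-ig}+\tilde\gamma e^{i\og}+\tilde\delta e^{-i\og}=0$, the analogue of \eqref{E1.7} with $\og$ in place of $\ug$, and the classification proceeds as in Theorem \ref{T1}: count how many of $\tilde\alpha,\tilde\beta,\tilde\gamma,\tilde\delta$ vanish and invoke the $\og$-versions of Lemma \ref{new3} (exactly one vanishes, giving property (i)) and Lemma \ref{new2} with $u=L^*(\og)$, whose conclusion forces $g+\og$ or $g-\og$ constant (property (iii)); two vanishing coefficients yield either $L^*(\og)\equiv0$ (property (ii)) or the same constancy, and the genuinely transcendental residue is property (iv).

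The main obstacle is exactly this last classification. Unlike in Theorems \ref{T1} and \ref{T17}, the coefficients $\tilde\alpha,\dots,\tilde\delta$ are not polynomials but involve $L^*(g)$ and $L^*(\og)=\overline{L^*(g)}$, so Lemmas \ref{new3}, \ref{new2} and \ref{borel} do not apply verbatim: one must first use the $e^{\pm i\og}$-balance to promote $L^*(\og)$ to a polynomial, then carefully split on whether $g-\og$ or $g+\og$ is constant (which governs whether $e^{i\og}$ is proportional to $e^{\pm ig}$ and hence whether terms coalesce), using Lemma \ref{const} to pass between constancy of $g\pm\og$ and vanishing of $L^*(g)\pm L^*(\og)$, and treating the transcendental branch through the growth comparisons of Lemmas \ref{L1}, \ref{new1} and \ref{L5}. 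The degenerate cases need separate but parallel arguments: when $b_1b_2\equiv0$ one block of $F$ collapses, the clean $e^{\pm i\og}$-balance is lost, and elimination leaves only a quartic partial differential relation between $g$ and its shift; when $|A|\equiv0$ Cramer's rule is unavailable, so instead one combines the two original identities with the shift-consistency of $f$ and $\of$ and the relation $|A|\equiv0$ itself to extract properties (a)–(c), the absence of a transcendental branch reflecting the rigidity imposed by $|A|\equiv0$.
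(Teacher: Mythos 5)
Your overall architecture matches the paper's: Lemma \ref{equiv} with $s(f)=L^*(f)$, $t(f)=\of$, $u(f)=f$ gives exactly the two scalar identities (the paper's (23.8.2)--(23.8.3), which are the displayed equations of the $|A|\equiv 0$ part); applying $L^*$ to the second, using $L^*(\of)=\overline{L^*(f)}$, and eliminating $L^*(f)$ and its shift via the first identity does produce the system $A(f,\of)^T=\tfrac{1}{2i}BE+\tfrac{1}{2i}C\overline{E}$, and Cramer's rule yields \eqref{E23-8-11}. The elimination idea for $b_1b_2\equiv 0$ (yielding the quartic PDE) and the projection onto the kernel of $A$ when $|A|\equiv 0$ are also in the spirit of the paper.

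The genuine gap is in your classification step for $b_1b_2\not\equiv 0$. After substituting \eqref{E23-8-11} into $2iDL^*(f)-2id_1f=-b_1e^{ig}+pb_2e^{-ig}$ one obtains a relation of the form $\alpha e^{i\og}+\beta e^{-i\og}=\gamma e^{ig}+\delta e^{-ig}$ (the paper's \eqref{8.07B}), but you cannot conclude that ``the $e^{\pm i\og}$-coefficients must cancel.'' The four exponentials need not be separable by a Borel-type argument: $e^{i(g\pm\og)}$ may have small growth (e.g.\ when $g\pm\og$ is constant), and the coefficients themselves contain $L^*(g)$, $(L^*(g))^2$, $L^*(L^*(g))$ and $L^*(\og)$, which are not a priori small in the required sense. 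Worse, if your coefficient-matching were valid it would force $L^*(\og)$ to be a polynomial in every case, which would eliminate property (iv) of the theorem --- yet (iv) is precisely the residual case the paper cannot rule out (Sub-case 1.1, when both $L^*(g)+L^*(\og)$ and $L^*(g)-L^*(\og)$ are transcendental). The correct treatment, as in the paper, is a full case analysis on which of $\alpha,\beta,\gamma,\delta$ vanish identically: when none vanish one must first split on whether $h_1=L^*(g)+L^*(\og)$ or $h_2=L^*(g)-L^*(\og)$ is a polynomial --- only then can $\alpha,\beta$ be rewritten as polynomials in $h=L^*(g)$ so that a Lemma \ref{new2}--type second main theorem argument gives $\alpha\beta-\gamma\delta\equiv 0$, after which a logarithmic-derivative estimate on the resulting algebraic relation in $h$ and $L^*(h)/h$ shows $h$ is a polynomial and yields property (iii); when both $h_1,h_2$ are transcendental one can only record property (iv). The sub-cases with one, two, or three vanishing coefficients (handled via Lemma \ref{new3} and direct arguments, and yielding properties (i), (ii), (iii)) also cannot be collapsed into a single coefficient-matching step. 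Your plan as written therefore proves a false strengthening of the theorem and skips the analysis that constitutes the bulk of the paper's proof.
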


\begin{proof} We apply Lemma \ref{equiv} with $s(f)=L^*(f),\ t(f)=\overline{f}$ and $u(f)=f$ to \eqref{E23.8.1} and get
\begin{align*}
\left(\begin{array}{c}
    L^*(f)  \\ [5pt] \overline{f}
\end{array}\right)
= \frac{1}{2iD}\left(
  \begin{array}{cc}
    -b_1 & pb_2 \\ [5pt] a_1 & -pa_2
  \end{array}
\right)E
+
 \frac{1}{D} \left(
  \begin{array}{c}
  d_1 \\ [5pt] d_2
  \end{array}
\right)f,
\end{align*}
where $E=\left(
     e^{ig} \quad e^{-ig}\right)^T$ is a $2\times 1$ matrix, and which gives
\begin{equation}\label{23.8.2}
DL^*(f)=\frac{1}{2i}\left(
   -b_1\quad  pb_2
  \right)E+d_1f
\end{equation}
and
\begin{eqnarray}\label{23.8.3}
\left(
   -d_2\quad  D
  \right)\left(
  \begin{array}{c}
   f  \\ [.1cm] \overline{f}
  \end{array}
\right)
&=&
\frac{1}{2i}\left(
   a_1\ \  -pa_2
  \right)\left(
  \begin{array}{c}
   e^{ig} \\[.1cm]  e^{-ig}
  \end{array}
\right).
\end{eqnarray}
Taking partial differentiation on both sides of \eqref{23.8.3} gives
\begin{align}\label{23.8.4}
&& \left(
   -L^*(d_2)\quad  L^*(D)
  \right)\left(
  \begin{array}{c}
   f  \\ [.2cm] \overline{f}
  \end{array}
\right)
+
\left(
 -d_2\quad  D
  \right)\left(
  \begin{array}{c}
   L^*(f)  \\ [.2cm] L^*(\overline{f})
  \end{array}
\right) \\ \nonumber
&&=
\frac{1}{2i}\big(
  L^*(a_1)\ \  -L^*(pa_2)
  \big) E+
  \frac{1}{2i}\big(
ia_1L^*(g)\ \ ipa_2L^*(g)
  \big)  E.
\end{align}
Combing \eqref{23.8.3} with \eqref{23.8.4} gives
\begin{align}\label{23.8.5}
&  \left(\begin{array}{cc}
   -d_2 &  D  \\ [.2cm]
 -L^* (d_2) &  L^*(D) \\
  \end{array}\right)
\left(
  \begin{array}{c}
   f  \\ [.2cm] \overline{f}
  \end{array}
\right)
=
\left(\begin{array}{cc}
    0 & 0 \\[.2cm]
   d_2 &  -D\\
  \end{array}\right)\left(
  \begin{array}{c}
   L^*(f)  \\ [.2cm] L^*(\overline{f})
  \end{array}
\right)
 \\ \nonumber
& \qquad \qquad  +
\frac{1}{2i}\left(\begin{array}{cc}
 a_1&  -pa_2\\[.2cm]
  L^*(a_1)+ia_1L^*(g)  & -L^*(pa_2)+ipa_2L^*(g)\\
  \end{array}\right)
  E. \\ \nonumber
\end{align}
 Set $\overline{E}=\left(
     e^{i\overline{g}} \quad e^{-i\overline{g}}\right)^T$, which is the shift of $E$. Then by \eqref{23.8.2}, we get
 \begin{align}\label{23.8.6}
 & \left(
  \begin{array}{c}
   L^*(f)  \\ [.2cm] L^*(\overline{f})
  \end{array}
\right)
=\frac{1}{2i}\left(\begin{array}{cc}
    \frac{-b_1}{D} & \frac{pb_2}{D} \\[.2cm]
   0 &  0\\
  \end{array}\right)
 E\\ \nonumber
&\qquad
+
\frac{1}{2i}\left(\begin{array}{cc}
0 & 0 \\ [.2cm]
    \frac{-\overline{b_1}}{\overline{D}} & \frac{\overline{pb_2}}{\overline{D}}
  \end{array}\right)
   \overline{E}+\left(\begin{array}{cc}
\frac{d_1}{D} & 0 \\ [.2cm]
    0 & \frac{\overline{d_1}}{\overline{D}}
  \end{array}\right)
   \left(
  \begin{array}{c}
   f \\[.2cm]  \overline{f}
  \end{array}\right).
\end{align}
Plugging \eqref{23.8.6} into \eqref{23.8.5} gives
\begin{equation}\label{23.8.7}
A\left(
  \begin{array}{c}
   f  \\ [.2cm] \overline{f}
  \end{array}
\right)
=\frac{1}{2i}B E
+
\frac1{2i}C
  \overline{E},
\end{equation}
where the matrices $A=(a_{ij})_{2\times 2}, B=(b_{ij})_{2\times 2}, C=(c_{ij})_{2\times 2}$ are defined by
\begin{align*}
A = \left(\begin{array}{cc}
   -d_2 &  D  \\ [.2cm]
  -L^*(d_2)-\frac{d_1d_2}{D} &  L^*(D)+\frac{D\overline{d_1}}{\overline{D}} \\
  \end{array}\right),\quad  C=\left(\begin{array}{cc}
0 & 0 \\ [.2cm]
    \frac{\overline{b_1}D}{\overline{D}} & -\frac{\overline{pb_2}D}{\overline{D}}
  \end{array}\right),\\
  B=\left(\begin{array}{cc}
  a_1&  -pa_2\\[.2cm]
   L^*(a_1)-\frac{b_1d_2}{D}+ia_1L^*(g)  &\,\, -L^*(pa_2)+\frac{pb_2d_2}{D}+ipa_2L^*(g)\\
  \end{array}\right).
 \end{align*}
\vskip 2mm
{\it Case 1:} $A$ is invertible. Then $$|A|=-d_2\Big(L^*(D)+\frac{D\overline{d_1}}{\overline{D}}\Big)+DL^*(d_2)+d_1d_2\not\equiv 0$$  and
 $$
 A^{-1}
 =\frac1{|A|}\left(\begin{array}{cc}
    L^*(D)+\frac{D\overline{d_1}}{\overline{D}} & -D   \\ [.2cm]
  L^*(d_2)+\frac{d_1d_2}{D}  &   -d_2
  \end{array}\right)
 :=\frac1{|A|}\left(\begin{array}{cc}
    a_{22} &  -a_{12}  \\ [.2cm]
  -a_{21}  &   a_{11}
  \end{array}\right).
  $$
Noting $c_{11}=c_{12}=0$, we yield  from \eqref{23.8.7} that
\begin{equation*}
\left(
  \begin{array}{c}
   f  \\ [.2cm] \of
  \end{array}
\right)
=\frac{1}{2i}A^{-1}B E
+\frac1{2i}A^{-1}C \overline{E},
\end{equation*}
which gives
\begin{equation}\label{E23-8-11.1}
\begin{split}
2i|A|f =& \big(a_{22}b_{11}-a_{12}b_{21}\quad a_{22}b_{12}-a_{12}b_{22}\big) E\\
&+\big(-a_{12}c_{21}\quad -a_{12}c_{22}\big)\overline{E}.
\end{split}
\end{equation}
By a straight computation of \eqref{E23-8-11.1}, we obtain \eqref{E23-8-11}  in our theorem. Taking partial derivative on both sides of \eqref{E23-8-11} gives

\begin{align*}
L^*(f)& =
\left\{L^*\left(\frac{K}{2i|A|}\right)+\frac{K}{2i|A|}
  \left(\begin{array}{cc}
    iL^*(g) &  0  \\ [.2cm]
   0 &   -iL^*(g)
  \end{array}\right)
   \right\} E
  \\ \nonumber
  & \qquad
  +  \left\{L^*\left(\frac{F}{2i|A|}\right)+\frac{F}{2i|A|}
   \left(\begin{array}{cc}
    iL^*(\overline{g}) &  0  \\ [.2cm]
   0 &   -iL^*(\overline{g})
  \end{array}\right)
  \right\}\overline{E}.
\end{align*}
Combining this and \eqref{E23-8-11} with \eqref{23.8.2} yields
\begin{equation}\label{8.07B}
(\alpha \quad  \beta)
 \left(
  \begin{array}{c}
   e^{i\overline{g}} \\[.2cm]  e^{-i\overline{g}}
  \end{array}
\right) =
(\gamma \quad \delta)\left(
  \begin{array}{c}
   e^{i{g}} \\[.2cm]  e^{-i{g}}
  \end{array}
\right),
\end{equation}
where
\begin{align}\label{24-04-20}
(\alpha \quad  \beta)
&=D\left\{L^*\left(\frac{F}{|A|}\right)+\frac{F}{|A|}
   \left(\begin{array}{cc}
    iL^*(\overline{g}) &  0  \\ [.2cm]
   0 &   -iL^*(\overline{g})
  \end{array}\right)
  \right\}-\frac{d_1}{|A|}F,
  \\ \nonumber
 \alpha &=-DL^*\left(\frac{\overline{b_1}D^2}{|A|\overline{D}}\right)-i\frac{\overline{b_1}D^3}{|A|\overline{D}}L^*(\overline{g})+\frac{d_1\overline{b_1}D^2}{|A|\overline{D}},
 \\ \nonumber
 \beta &=DL^*\left(\frac{\overline{pb_2}D^2}{|A|\overline{D}}\right)-i\frac{\overline{pb_2}D^3}{|A|\overline{D}}L^*(\overline{g})-\frac{d_1\overline{pb_2}D^2}{|A|\overline{D}};
\end{align}
and
\begin{align*}
(\gamma \ \  \delta)
& =(-b_1\quad pb_2)+\frac{d_1}{|A|}K
-D\left\{L^*\left(\frac{K}{|A|}\right)+\frac{K}{|A|}
  \left(\begin{array}{cc}
    iL^*(g) &  0  \\ [.2cm]
   0 &   -iL^*(g)
  \end{array}\right)
   \right\}, 
  \\ \nonumber
\gamma &=-b_1+\frac{d_1x}{|A|}-DL^*\left(\frac{x}{|A|}\right)+i\left(\frac{x}{|A|}-L^*\left(\frac{a_1D}{|A|}\right)-\frac{d_1a_1}{|A|}\right)DL^*(g)\\ \nonumber
& \qquad +i\frac{a_1D^2}{|A|}L^*(L^*(g))+\frac{a_1D^2}{|A|}\left(L^*(g)\right)^2,
\\ \nonumber
\delta & =pb_2+\frac{d_1y}{|A|}-DL^*\left(\frac{y}{|A|}\right)+i\left(L^*\left(\frac{pa_2D}{|A|}\right)+\frac{y}{|A|}-\frac{d_1pa_2}{|A|}\right)DL^*(g)\\ \nonumber
& \qquad +i\frac{pa_2D^2}{|A|}L^*(L^*(g))+\frac{pa_2D^2}{|A|}\left(L^*(g)\right)^2.
\end{align*}
Here, $x,y$ are the terms not involving $L^*(g)$ and defined by
\begin{equation*}
\begin{split}
x&=a_1(L^*(D)+\frac{D\overline{d_1}}{\overline{D}})+b_1d_2-DL^*(a_1),  \\
y&=-pa_2(L^*(D)+\frac{D\overline{d_1}}{\overline{D}})-pb_2d_2+DL^*(pa_2).
\end{split}
\end{equation*}

To obtain the properties of $g$, we consider all possible cases on coefficients $\alpha, \beta, \gamma, \delta$ in \eqref{8.07B} under the assumption that
$g$ is non-constant. Otherwise, $g$ has the property (i).

{\it Sub-case 1.1:} none of $\alpha, \beta, \gamma$ and $\delta$ is identically equals to zero.
Set
$$
h_1=L^*(g)+L^*(\overline{g}) \quad \mbox{and} \quad h_2=L^*(g)-L^*(\overline{g}).
$$
\par First, when  $h_1$  is a polynomial, we make $\alpha$ and $\beta$ to be polynomials in $L^*(g)$ with rational coefficients involving $h_1$.
Thus, set $h=L^*(g),$ noting $\gamma$ and $\delta$ are polynomials in $h, h^2$ and
$L^*(h)$ with rational coefficients and
$$
T(r, h)\le O(T(r, g)) \quad \mbox{and} \quad T(r, L^*(h))\le O(T(r, h))\le O(T(r, g)),
$$
we get $\alpha\beta-\gamma\delta\equiv 0$ by a similar argument in the proof of the first part of Lemma \ref{new2},
where
$$
\alpha\beta-\gamma\delta
=u_1h^4+u_2h^3+u_3h^2L^*(h)+u_4h^2+u_5hL^*(h)+u_6h+u_7L^*(h)+u_8,
$$
where $u_1=-|A|^{-2}D^2pa_1a_2$ and  $u_j$ ($j=2,\cdots, 8$) are rational functions.\par

Assume $u_1\not\equiv 0.$ Since $\alpha\beta-\gamma\delta=0$,
\begin{equation}\label{sec8newA}
h=v_2+v_3\frac{L^*(h)}{h}+\frac{v_4}{h}+\frac{v_5}h\frac{L^*(h)}{h}+\frac{v_6}{h^2}+\frac{v_7}{h^2}\frac{L^*(h)}{h}+\frac{v_8}{h^3},
\end{equation}
where $v_j=u_j/u_1$ ($j=2,\cdots, 8$) are rational coefficients. It follows that
\begin{align*}
T(r, h)
&=
\int_{S_{n}(r)}\log^{+}|h(z)|\sigma_{n}(z)=\int_{S_{n}(r)\cap \{z: |h(z)|\ge 1\}}\log^{+}|h(z)|\sigma_{n}(z)\\
&\le
3m(r, \frac{L^*(h)}{h})+ O(\log r)\le 3\sum_{j=1}^{s}m(r, \frac{h_{z_{j}}}{h})+ O(\log r)\\
&=O(\log T(r, h))+O(\log r),
\end{align*}
for $r \not\in E \subset [1,\infty)$, where $E$ is a set of finite Lebesgue measure. Here, we also use the lemma of logarithmic derivative in the above estimation. Thus, $h$ is a polynomial.\par

Assume $u_1\equiv 0,$ e.g., either $a_1\equiv 0$ or $a_2\equiv 0.$ Clearly, $a_1\equiv 0$ and $a_2\equiv 0$ can not happen at the same time since $D\not\equiv 0$.  Whenever $a_1\equiv 0$, $b_1\not\equiv 0$ by $D\not\equiv 0$, so we have $x=b_1d_2\not\equiv 0$, and
$$\gamma =-b_1+\frac{d_1x}{|A|}-DL^*\left(\frac{x}{|A|}\right)+i\frac{x}{|A|}L^*(g).$$
Then we have
$$0\equiv\alpha\beta-\gamma\delta=u_1^{*}h^3+u_2^{*}h^2+u_3^{*}hL^*(h)+u_4^{*}h+u_5^*L^*(h)+u_6^{*},$$
where $u_1^{*}:=-i|A|^{-2}D^3pa_2x\not\equiv 0$ and  $u_j^*$ ($j=2, 3, 4, 5, 6$) are rational functions, which leads
\begin{equation}h=v_2^{*}+v_3^{*}\frac{L^*(h)}{h}+\frac{v_4^{*}}{h}+\frac{L^*(L^*)}{h}\frac{v_5^{*}}{h}+\frac{v_6^{*}}{h^2},
\end{equation} where $v_j^{*}=u_j^{*}/u_1^{*}$ ($j=2, 3, 4, 5, 6$) are rational coefficients. Similarly as in the argument for $u_1\not\equiv 0$, it follows that
\begin{align*}
T(r, h)
&\le
2\sum_{j=1}^{s}m(r, \frac{h_{z_j}}{h})+ O(\log r)\\&=O(\log T(r, h))+O(\log r),\quad r\not\in E
\end{align*}
where $E$ is a set of finite Lebesgue measure. Hence, $h$ is a polynomial. \par

Whenever $a_2\equiv 0$, then $a_1\not\equiv 0$ and $b_2\not\equiv 0$ since $D\not\equiv 0$. At the same time, $y=-pb_2d_2\not\equiv 0$ and
$$\delta =pb_2+\frac{d_1y}{|A|}-DL^*\left(\frac{y}{|A|}\right)-i\frac{y}{|A|}L^*(g).$$
By using similar discussion as the case of $a_1\equiv 0,$ we also obtain that $h$ is a polynomial. \par

So, we get that $L^*(g)$ is a polynomial. By a similar argument used in the proof of the second part of Lemma \ref{new2}, we obtain that either
$g+\overline{g}$ or $g-\overline{g}$ is constant, that is, the property (iii).\par

Secondly, when $h_2$ is a polynomial, in the same manner, we also get that $L^*(g)$ is a polynomial, and either $g+\overline{g}$ or $g-\overline{g}$ is constant.\par\par

Lastly, when both $h_1$ and $h_2$ are transcendental, we obtain that $g$ has the property (iv).\par

{\it Sub-case 1.2:} exactly one of $\alpha, \beta, \gamma$ and $\delta$ is identically equals to zero.
We claim that $\alpha, \beta, \gamma$ and $\delta$ are rational functions in this sub-case.\vskip 1mm

{\it \ \ Sub-case 1.2.1:} $b_1b_2\not\equiv 0$. If $\alpha\equiv 0$ or $\beta\equiv 0$, then $L^*(\overline{g})$ is a rational function, that is, $L^*(g)$ is a rational function. Thus, we get that $\alpha, \beta, \gamma$, and $\delta$ are rational functions.

When $\gamma\equiv 0$, we have
\begin{eqnarray*}\label{gamma=0}&&0\equiv -b_1+\frac{d_1x}{|A|}-DL^*\left(\frac{x}{|A|}\right)+i\left(\frac{x}{|A|}+L^*\left(\frac{a_1D}{|A|}\right)-\frac{d_1a_1}{|A|}\right)DL^*(g)\\ \nonumber
&& \qquad +i\frac{a_1D^2}{|A|}L^*(L^*(g))+\frac{a_1D^2}{|A|}\left(L^*(g)\right)^2,\end{eqnarray*}
If $a_1=kp_1-ip_3\not\equiv 0,$ then by the lemma of logarithmic derivative and using a similar discussion as in Sub-case 1.1 we get $$T(r, L^*(g))=O(\log r)$$ for $r \not\in E \subset [1,\infty),$  where $E$ is a set of finite Lebesgue measure.  If $a_1\equiv 0,$ then $b_1\not\equiv 0$ by $D\not\equiv 0$, and\begin{eqnarray*}\label{gamma=0}&&0\equiv -b_1+\frac{b_1d_1d_2}{|A|}-DL^*\left(\frac{b_1d_2}{|A|}\right)+i\frac{b_1d_2}{|A|}DL^*(g).\end{eqnarray*} This implies $T(r, L^*(g))=O(\log r),$ too. Thus $L^*(g)$ is a polynomial, and $\alpha, \beta$ and $\delta$ are rational functions.\par

Similarly,  when $\delta\equiv 0,$ we have \begin{eqnarray*}\label{delta=0}&&0\equiv pb_2+\frac{d_1y}{|A|}-DL^*\left(\frac{y}{|A|}\right)+i\left(L^*\left(\frac{pa_2D}{|A|}\right)+\frac{y}{|A|}-\frac{d_1pa_2}{|A|}\right)DL^*(g)\\ \nonumber
&& \quad +i\frac{pa_2D^2}{|A|}L^*(L^*(g))+\frac{pa_2D^2}{|A|}\left(L^*(g)\right)^2.\end{eqnarray*}
If $a_2=kp_1+ip_3\not\equiv 0,$ then by the lemma of logarithmic derivative and using a similar discussion as in Sub-case 1.1 we get $T(r, L^*(g))=O(\log r)$ for $r \not\in E \subset [1,\infty),$  where $E$ is a set of finite Lebesgue measure. If $a_2\equiv 0,$ then we have $b_2\not\equiv 0$ by $D\not\equiv 0$, and
\begin{eqnarray*}&&0\equiv pb_2-\frac{pb_2d_1d_2}{|A|}+DL^*\left(\frac{pb_2d_2}{|A|}\right)-i\frac{pb_2d_2}{|A|}DL^*(g).\end{eqnarray*}
This again implies $T(r, L^*(g))=O(\log r)$. Thus, we get $\alpha, \gamma$ and $\delta$ are rational functions.\par

Therefore, the claim is proved. Applying Lemma \ref{new3} to this sub-case, we get that the property (i) holds.\par

{\it \ \ Sub-case 1.2.2:} $b_1b_2\equiv 0.$ Since $D\not\equiv 0,$ $b_1$ and $b_2$ cannot be identically equal to zero at the same time. Without loss of generality, we assume $b_1\not\equiv 0$ and $b_2\equiv 0$.
Thus, \eqref{E23-8-11.1} becomes
\begin{equation}\label{bad1}
i|A|f = \big(a_{22}b_{11}-a_{12}b_{21}\quad a_{22}b_{12}-a_{12}b_{22}\big) E
-a_{12}c_{21}e^{i\og}.
\end{equation}
We take the operator $L^*$ on the both sides of the equation and obtain
\begin{equation}\label{bad2}
L^*\Big(\frac{i|A|f - \big(a_{22}b_{11}-a_{12}b_{21}\quad a_{22}b_{12}-a_{12}b_{22}\big) E}{a_{12}c_{21}}\Big)
=iL^*(\og))e^{i\og}.
\end{equation}
It follows from \eqref{bad1} and \eqref{bad2} that we obtain an equation without the factor $e^{i\og}$.
Using the same technique to the equation, we can eliminate $e^{ig}$ and $e^{-ig}$. Hence, we yield that
$g$ and $\og$ satisfy a non-linear partial differential equation of degree $4$.

Therefore, {\it Sub-case 1.2} is completed.

{\it Sub-case 1.3:} exactly two of $\alpha, \beta, \gamma$ and $\delta$ are identically equal to zero.

Firstly, when $\gamma\equiv\delta\equiv 0$ and $\alpha\beta \not\equiv 0,$  we
we can write \eqref{8.07B} as
\begin{equation*}
{\alpha} e^{i2\overline{g}}+\beta=0.
\end{equation*}
Since $D\not\equiv 0$, $a_1\equiv 0$ and $a_2\equiv 0$ can not hold at the same time. It follows from $\gamma=\delta\equiv 0$ that $L^*(g)$ is a polynomial. Thus, $\alpha$ and $\beta$ are rational functions.
Therefore, $g$ is constant, i.e., $g$ has the property (i).\par

Secondly, when $\alpha\equiv \beta \equiv 0$ and $\gamma\delta\not\equiv 0$, we
we can write \eqref{8.07B} as
\begin{equation*}
{\gamma} e^{i2g}+\delta=0.
\end{equation*}
Since $D\not\equiv 0,$ at least one of $b_1$ and $b_2$  is not identically equal to zero. Then it follows from $\alpha\equiv \beta\equiv 0$ that $L^*(g)$ is a polynomial. Thus, $\gamma$ and $\delta$ are rational functions.
Therefore, $g$ is constant, i.e., $g$ has the property (i).

Thirdly, when $\alpha\equiv \gamma \equiv 0$ and $\beta\delta\not\equiv 0$, we get
$$
\beta=\delta e^{-i(g-\og)}.
$$
Similarly as in Sub-case 1.2, $\gamma \equiv 0$ implies that $L^*(g)$ is polynomial. Then both $\beta$ and $\delta$ are rational functions, and thus $g-\og$ is constant.  Hence, $g$ has the property (iii).

Fourthly, when $\alpha\equiv \delta \equiv 0$ and $\beta\gamma\not\equiv 0$, we have
$$
\beta=\gamma e^{i(g+\og)}.
$$
Similar to the third situation, $L^*(g)$ is polynomial and $g+\og$ is constant. Thus, $g$ has the property (iii).

In a similar manner, when the last situations $\beta\equiv \gamma\equiv 0$ and $\beta\equiv \delta\equiv 0$ occur, we also get $g$ has either the property (i) or the property (iii).


{\it Sub-case 1.4:} exactly three of $\alpha, \beta, \gamma$ and $\delta$ are identically equal to zero. Obviously, from \eqref{8.07B}, $\alpha\equiv \beta\equiv \gamma\equiv \delta\equiv 0$. Since $b_1,b_2$ can not be identical to zero at the same time, $\alpha\equiv\beta\equiv 0$ leads immediately that $L^*(\overline{g})$ is a polynomial. Further  from the exact forms of $\alpha,\beta$ in \eqref{24-04-20}, if $b_1b_2\not\equiv 0$, we get
\begin{equation*}
0=-\overline{pb_2}L^*\left(\frac{\overline{b_1}D^2}{|A|\overline{D}}\right)+\overline{b_1}L^*\left(\frac{\overline{pb_2}D^2}{|A|\overline{D}}\right)-2i\frac{\overline{pb_1b_2}D^2}{|A|\overline{D}}L^*(\overline{g}).
\end{equation*}
This implies
$L^*(\overline{g})\equiv 0$ by Liouville's theorem that is just property (ii). When $b_1b_2\equiv 0$, then we can do the same as in {\it Sub-case 1.2.2}.
\vskip 2mm

\par {\it Case 2:} $A=(a_{ij})_{2\times 2}$ is not invertible. This means $a_{11}a_{22}-a_{12}a_{21}=0$, and we set
\begin{equation*}
N=\left(\begin{array}{cc}
    a_{22} &  a_{12}  \\ [.2cm]
  -a_{22}  &   a_{12}
  \end{array}\right).
\end{equation*}
It follows from \eqref{23.8.7} that

\begin{equation}\label{23-8-12}
2a_{22}\left(\begin{array}{cc}
    a_{11} &  a_{12}  \\ [.2cm]
        0  &   0
  \end{array}\right)\left(
  \begin{array}{c}
   f  \\ [.2cm] \of
  \end{array}
\right)=NA\left(
  \begin{array}{c}
   f  \\ [.2cm] \of
  \end{array}
\right)
=\frac{1}{2i} NB E
+\frac{1}{2i} NC\overline{E},
\end{equation}
which gives
\begin{equation*}
(-a_{22}\quad a_{12}) BE+(-a_{22}\quad a_{12})C\overline{E}=0.
\end{equation*}
Then we get
\begin{equation}\label{23-8-12.1}
\alpha_{*} e^{i\overline{g}}+\beta_{*} e^{-i\overline{g}}+\gamma_{*} e^{ig}+\delta_{*} e^{-ig}=0,
\end{equation}
where \begin{eqnarray*}
\alpha_{*} & =&a_{12}c_{21}=\frac{\overline{b_1}D^2}{\overline{D}}, \quad \beta_{*}=a_{12}c_{22}=-\frac{\overline{pb_2}D^2}{\overline{D}}, \\
\gamma_{*} & =&-a_{22}b_{11}+a_{12}b_{21}\\
&=& -a_1(L^*(D)+\frac{D\overline{d_1}}{\overline{D}})-b_1d_2+D(L^*(a_1)+ia_1L^*(g)),\\
\delta_{*} & =&-a_{22}b_{12}+a_{12}b_{22}\\
&=& pa_2(L^*(D)+\frac{D\overline{d_1}}{\overline{D}})+pb_2d_2-D(L^*(pa_2)-ipa_2L^*(g)).
\end{eqnarray*}
The assumption $D\not\equiv 0$ gives that $b_1$ and $b_2$ can not be identically equal to zero at the same time. Thus $\alpha_{*}$ and $\beta_{*}$ can not be identically equal to zero at the same time.  If there exist three of $\alpha_*, \beta_*, \gamma_*$ and $\delta_*$ are identically equal to zero, then all $\alpha_*, \beta_*, \gamma_*$ and $\delta_*$ are identically equal to zero. This is impossible. Below we prove the theorem by considering three cases.\par

{\it Case 2.1:} exactly two of  $\alpha_*, \beta_*, \gamma_*$ and $\delta_*$ are identically equal to zero.\par

{\it Sub-case 2.1.1:} $\alpha_*\equiv \gamma_*\equiv 0.$ Then $b_1\equiv 0$, and $a_1\not\equiv 0$ by $D\not\equiv 0$, further $\gamma_*\equiv 0$ implies $L^*(g)$ is a polynomial. The equation \eqref{23-8-12.1} is reduced to
$$e^{-i(g-\overline{g})}\equiv-\frac{\beta_*}{\delta_*},$$
which is impossible unless  $g-\overline{g}$ is constant. Thus, $g$ has the property (b) in the theorem. \par

{\it Sub-case 2.1.2:} $\alpha_*\equiv \delta_*\equiv 0.$ Clearly $b_1\equiv 0,$ so $b_2\not\equiv 0$, then $a_2\not\equiv 0$ by $D\not\equiv 0$ and $L^*(g)$ is a polynomial. The equation \eqref{23-8-12.1} becomes
$$e^{i(g+\overline{g})}\equiv-\frac{\beta_*}{\gamma_*},$$
which is a contradiction unless $g+\overline{g}$ is constant. This means that $$L^*(g+\overline{g})=L^*(g)+L^*(\overline{g})=0.$$
By Lemma \ref{const}, we have $L^*(g)=0$. Thus, $g$ has the property (b) in the theorem. \par

{\it Sub-case 2.1.3:} $\beta_*\equiv \gamma_*\equiv 0.$ Then $b_2\equiv 0,$ which means $b_1\not\equiv 0$ and $a_2\not\equiv 0$ since $D\not\equiv 0.$ If $a_1\equiv 0$, $\gamma_*\equiv 0$ leads $d_2\equiv 0$, a contradiction. Thus, $\gamma_*\equiv 0$ must imply $L^*(g)$ is a polynomial. Equation \eqref{23-8-12.1} is reduced to
$$e^{i(g+\overline{g})}\equiv-\frac{\delta_*}{\alpha_*},$$
which implies that $g+\overline{g}$ is constant, similarly $L^*(g)=0$ by Lemma \ref{const}. Thus, $g$ has the property (b) in the theorem. \par

{\it Sub-case 2.1.4:} $\beta_*\equiv \delta_*\equiv 0.$ Similarly $b_2\equiv 0$ and $a_2\not\equiv 0$, so $\delta_*\equiv 0$ implies $L^*(g)$ is a polynomial. Equation \eqref{23-8-12.1} becomes
$$e^{i(\overline{g}-g)}\equiv-\frac{\gamma_*}{\alpha_*},$$
then $g-\overline{g}$ is constant, and $g$ has the property (b) in the theorem. \par

{\it Sub-case 2.1.5:}  $\gamma_*\equiv\delta_*\equiv 0.$ Since $D\not\equiv0,$ we get that $a_1$ and $a_2$ can not be identically equal to zero at the same time. Then $L^*(g)$ is a polynomial from either $\gamma_*\equiv 0$ or $\delta_*\equiv 0.$ From \eqref{23-8-12.1}, we have
$$e^{2i\overline{g}}\equiv-\frac{\beta_*}{\alpha_*},$$
which means that $\overline{g}$ is constant. Thus, $g$ has the property (a) in the theorem. \par

{\it Case 2.2:} exactly one of $\alpha_*, \beta_*, \gamma_*$ and $\delta_*$ is identically equal to zero. Applying Lemma 2.9 with $u=L^*(g)$ to \eqref{23-8-12.1}, it always follows that $g$ is constant. Thus $g$ has the property (a) in the theorem.
\par

{\it Case 2.3:} $\alpha_*\beta_*\gamma_*\delta_* \not\equiv 0.$
It is clear that
$$\alpha_*\beta_*-\delta_*\gamma_*
=pa_1a_2D^2(L^*(g))^2+s_1L^*(g)+s_0,
$$ where $s_1$ and $s_0$ are rational functions. Thus, we can apply Lemma \ref{new2} to  \eqref{23-8-12.1} with $u=L^*(g)$ and obtain that
 $\alpha_*\beta_*-\delta_{*}\gamma_{*}\equiv 0,$ and either
$$e^{i(g-\overline{g})}\equiv-\frac{\alpha_{*}}{\gamma_{*}}\equiv-\frac{\delta_{*}}{\beta_{*}} \quad \mbox{or}\quad e^{i(g+\overline{g})}\equiv -\frac{\beta_{*}}{\gamma_{*}}\equiv-\frac{\delta_{*}}{\alpha_{*}}.$$

{\it Sub-case 2.3.1:} whenever $a_1a_2=(kp_1-ip_3)(kp_1+ip_3)\not\equiv 0.$ Then by Lemma \ref{new2} we obtain further that $L^*(g)$ is a polynomial and either $g+\overline{g}$ or $g-\overline{g}$ is constant.

{\it Sub-case 2.3.2:} whenever $a_1\equiv 0$ and $a_2\not\equiv 0.$ Since $D\not\equiv 0,$ we have $b_1\not\equiv 0,$
$$\gamma_{*}=-b_1d_2\not\equiv 0, \quad s_1=ib_1d_2pa_2D\not\equiv 0.$$
Hence by Lemma \ref{new2} we obtain further that $L^*(g)$ is a polynomial and either $g+\overline{g}$ or $g-\overline{g}$ is constant.

{\it Sub-case 2.3.3:}  whenever $a_2\equiv 0$ and $a_1\not\equiv 0.$ We have $b_2\not\equiv 0$ by $D\not\equiv0$, and
$$\delta_{*}=pd_2b_2\not\equiv 0,\quad s_1=-id_2b_2pa_1D\not\equiv 0.$$
Hence by Lemma \ref{new2} we obtain further that $L^*(g)$ is a polynomial and either $g+\overline{g}$ and ${\delta}/{\alpha}$ are constant or  $g-\overline{g}$ and ${\delta}/{\beta}$ are constant.\par

From the above discussion for Sub-cases 2.3.1-2.3.3 we obtain the following conclusions: if $g$ is constant, then $g$ has the property (a); when $L^*(g)$ is polynomial and $g+\overline{g}$ is a constant, then we have $L^*(g)=0$ by Lemma \ref{const}, which means that $g$ has the property (b); when $L^*(g)$ is polynomial and $g-\overline{g}$ is a constant, then $g$ has the property (c).

 \end{proof}

\begin{remark}
It is straightforward to verify that  in Case I,  $|A|\equiv 0$, while in Cases II and III, $D\equiv 0$. Therefore, the method in Theorem \ref{T6.1} cannot be applied to Cases I-III.
\end{remark}

\end{document}